\tikzset{commutative diagrams/.cd,every label/.append style = {font = \normalsize}}
\numberwithin{equation}{section}
\newtheorem*{theorem*}{Theorem}
\newtheorem*{corollary*}{Corollary}
\newtheorem{thm}[equation]{Theorem}
\newtheorem{theorem}[equation]{Theorem}
\newtheorem{corollary}[equation]{Corollary}
\newtheorem{lemma}[equation]{Lemma}
\newtheorem{proposition}[equation]{Proposition}
\newtheorem{prop}[equation]{Proposition}
\newtheorem{conj}[equation]{Conjecture}
\theoremstyle{definition}
\newtheorem{defn}[equation]{Definition}
\newtheorem{definition}[equation]{Definition}
\newtheorem{example}[equation]{Example}
\newtheorem{remark}[equation]{Remark}
\newcommand{\td}[1]{\hat{#1}} 
\def\MR#1{}
\def\M{\mathcal{M}}
\def\B{\mathcal{B}}
\newcommand{\hatG}{\hat{G}}
\def\mcv{\mathcal{V}}
\newcommand{\Grk}{\Gr_{k,n}^{\ge 0}}
\newcommand{\A}{\mathcal{A}}
\newcommand{\T}{\mathcal{T}}
\def\AA{\mathcal{A}_{n,k,m}}
\newcommand{\R}{\mathbb{R}}
\newcommand{\RR}{\mathbb{R}}
\newcommand{\CC}{\mathbb{C}}
\newcommand{\CCC}{\mathcal{C}}
\def\PP{\mathbb{P}}
\newcommand{\gt}[1]{Z_{#1}} 
\newcommand{\gto}[1]{Z_{#1}^\circ} 
\newcommand{\simp}[1]{\Delta_{#1}}
\newcommand{\asimp}[1]{\hat{\Delta}_{#1}(Z)}
\newcommand{\asimpo}[1]{\hat{\Delta}_{#1}^{\circ}(Z)}
\DeclareMathOperator{\Mat}{Mat}
\DeclareMathOperator{\rank}{rank}
\DeclareMathOperator{\Trop}{Trop}
\DeclareMathOperator{\convex}{conv}
\DeclareMathOperator{\spn}{span}
\DeclareMathOperator{\Facet}{Facet}
\DeclareMathOperator{\area}{area}
\DeclareMathOperator{\Edges}{Edges}
\DeclareMathOperator{\var}{var}
\DeclareMathOperator{\sign}{sign}
\DeclareMathOperator{\cdes}{cDes_L}
\DeclareMathOperator{\Gr}{Gr}
\DeclareMathOperator{\Dr}{Dr}
\newcommand{\lrangle}[1]{\langle #1 \rangle}
\begin{document}


\title[The positive Grassmannian, amplituhedra, and clusters]{The positive Grassmannian, the amplituhedron, \\ and cluster algebras}

\author{Lauren K. Williams}


\address{Department of Mathematics, 1 Oxford Street, Cambridge, MA 02138}
\email{{williams@math.harvard.edu}}



\begin{abstract}

The \emph{positive Grassmannian} $\Gr_{k,n}^{\geq 0}$
is the subset of the real Grassmannian where all 
Pl\"ucker coordinates are nonnegative.  
It has a beautiful combinatorial structure
as well as connections to statistical physics,
	integrable systems, and scattering amplitudes.
The \emph{amplituhedron}
$\mathcal{A}_{n,k,m}(Z)$
is the image of the positive Grassmannian $\Gr_{k,n}^{\geq 0}$ under
a positive linear map $\R^n \to \R^{k+m}$.  We will explain 
how ideas from 
oriented matroids, tropical geometry,
 and cluster algebras shed light on the structure of the positive
	Grassmannian and the amplituhedron.
\end{abstract}

\maketitle
\setcounter{tocdepth}{1}
\tableofcontents


\section{Introduction}

The \emph{totally nonnegative 
Grassmannian} $\Grk$ or (informally) the \emph{positive Grassmannian} \cite{lusztig, postnikov}
can be defined as the subset of the real Grassmannian $\Gr_{k,n}$ where all 
Pl\"ucker coordinates
are nonnegative.  
It has a beautiful decomposition into \emph{positroid cells} \cite{postnikov, rietsch, PSW}, 
where each cell is obtained by specifying that certain Pl\"ucker coordinates are strictly
positive and the rest are zero.  
Since the work of Lusztig \cite{lusztig} and Postnikov \cite{postnikov, PostnikovICM}, 
there has been an extensive study of 
the positive Grassmannian,
including approaches involving cluster algebras,
tropical geometry, and 
  matroids and the moment map.

Remarkably, the positive Grassmannian has
several  applications in theoretical physics.
For example, the stationary distribution of the 
\emph{asymmetric simple exclusion process}, 
in which  particles hop on a line
with open boundaries, can be described in terms of cells of $\Gr_{k,n}^{\geq 0}$
\cite{CW1}.  In a different direction, 
each point $C$ of the real Grassmannian gives rise to a
\emph{soliton solution of the KP equation}, whose asymptotics are determined by 
the matroid of $C$, and which is regular for all times $t$ 
if and only if $C$ lies in the positive Grassmannian \cite{KW, KW2}.
In yet a third direction, 
 the {positive} Grassmannian encodes most of the physical properties of 
 \emph{scattering
amplitudes in planar $\mathcal{N}=4$ super Yang-Mills theory}
 \cite{ArkaniHamed:2009dn,  Bullimore:2009cb, abcgpt}. 
 This insight 
 combined with 
 an idea of  Hodges
 \cite{Hodges:2009hk} led Arkani-Hamed and Trnka to introduce the 
 \emph{amplituhedron} \cite{arkani-hamed_trnka},
 defined as the image
 of the positive Grassmannian under the \emph{amplituhedron map}.
In particular, any $n \times (k+m)$
matrix $Z$ whose maximal minors are positive induces
a map $\tilde{Z}$ from $\Gr^{\geq 0}_{k,n}$ to the Grassmannian
$\Gr_{k,k+m}$, whose image (of full dimension $km$)  is 
the \emph{amplituhedron} $\A_{n,k,m}(Z)$ \cite{arkani-hamed_trnka}.
When $m=4$,
 the \emph{BCFW recurrence} \cite{BCFW}
for computing scattering amplitudes can be used to produce
collections of $4k$-dimensional cells in $\Gr^{\geq 0}_{k,n}$ whose images 
subdivide (``tile'' or ``triangulate'') the amplituhedron \cite{arkani-hamed_trnka, ELT}.


The amplituhedron $\AA(Z)$ generalizes the positive Grassmannian (obtained 
when $k+m=n$),
cyclic polytopes (when $k=1$) \cite{arkani-hamed_trnka},  and cyclic hyperplane arrangements (when $m=1$)
\cite{karpwilliams}.
Moreover the amplituhedron has 
intriguing and beautiful
mathematical properties, many of them conjectural. 
For instance,
we conjecture that for even $m$, the number of top-dimensional strata
comprising  a tiling of $\AA(Z)$ is equal to
the number of plane partitions contained in the $k \times (n-k-m) \times \frac{m}{2}$
box \cite{karp:2017ouj}.
As another example, despite the fact that they have different dimensions
and one of them is not a polytope,
the hypersimplex $\Delta_{k+1,n}$ and the amplituhedron $\A_{n,k,2}(Z)$
are closely related: 
for example, \emph{T-duality} gives a bijection between
positroid tilings of $\Delta_{k+1,n}$
and positroid tilings of $\A_{n,k,2}(Z)$ \cite{LPW, PSBW}.

In this article 
we explain how ideas from 
the theory of matroids, tropical geometry,
 and cluster algebras shed light on the structure of 
 positive Grassmannians and amplituhedra.
We start in \cref{sec:strat} by introducing
the matroid stratification of the Grassmannian
and the positroid cell decomposition of the positive Grassmannian.
Given a surjective map $\phi$ from a cell complex $X$ onto another 
topological space $Y$, we also introduce the notion of \emph{$\phi$-induced 
tiling} of $Y$, which we will study in the case that $X$
is the positive Grassmannian (and call a \emph{positroid tiling}).
In \cref{sec:moment} 
we study positroid tilings when $\phi$ is the moment map,
which are subdivisions of the 
hypersimplex into positroid polytopes, and are related to the 
positive tropical Grassmannian.
In \cref{sec:stratification} we introduce the amplituhedron, 
giving two equivalent definitions, 
defining natural coordinates, characterizing its points when $m=1$ and $2$,
and defining
its \emph{sign stratification}, which is an analogue of the matroid 
stratification. 
In \cref{sec:amp} we then study positroid tilings when $\phi$
is the  amplituhedron
map.
We give a conjectural link to plane partitions, and discuss
the positroid cells on which the amplituhedron map is injective.
In \cref{sec:T} we explain a mysterious notion called \emph{T-duality}, which 
relates positroid tiles and tilings of the hypersimplex $\Delta_{k+1,n}$ to 
positroid tiles and tilings for  the amplituhedron $\mathcal{A}_{n,k,2}(Z)$. One manifestation
of this duality is the fact that the number of realizable sign strata
of $\A_{n,k,2}(Z)$ equals the volume of $\Delta_{k+1,n}$ (an Eulerian number).
Finally in \cref{sec:cluster} we present several connections
between the amplituhedron and cluster algebras, proved for $m=2$
but conjectural in general.

A great many mathematicians and physicists have
made tremendous contributions to the study of the positive 
Grassmannian and amplituhedron; it is impossible to give 
a complete account here.  The results described below in which
I played a role are joint with 
various 
collaborators
including F. Ardila, S. Karp, 
T. Lukowski, M. Parisi,  
K. Rietsch, F. Rinc\'on, M. Sherman-Bennett, D. Speyer,
K. Talaska, 
E. Tsukerman, and Y. Zhang.

\section{The positive Grassmannian and the matroid stratification}\label{sec:strat}

\subsection{The Grassmannian and the matroid stratification}
The \emph{Grassmannian} $\Gr_{k,n}=\Gr_{k,n}(\mathbb{K})$  
is the space of all $k$-dimensional subspaces of 
an $n$-dimensional vector space $\mathbb{K}^n$.  
Let $[n]$ denote $\{1,\dots,n\}$, and $\binom{[n]}{k}$ denote the set of all $k$-element subsets of $[n]$. 
We can
 represent a point $V \in 
\Gr_{k,n}$  as the row-span 
of
a full-rank $k\times n$ matrix $C$ with entries in 
$\mathbb{K}$;
then, for $I\in \binom{[n]}{k}$, we let $p_I(V)$ be the $k\times k$ minor of $C$ occupying the columns in $I$. The $p_I(V)$ 
are called the {\itshape Pl\"{u}cker coordinates} of $V$, and are independent of the choice of matrix 
representative $C$ (up to common rescaling).  The map 
$V \mapsto \{p_I(V)\}_{I\in \binom{[n]}{k}}$ 
embeds  $\Gr_{k,n}$ into 
projective space.  
We will sometimes abuse notation and \emph{identify $C$ with its row-span.}

\begin{defn}\label{defn:matroid}
A \emph{matroid} $\M$ is a pair $(E,\B)$, where $E$ is a finite set
and $\B$  a nonempty 
collection of subsets of $E$ called \emph{bases}, such 
that if $B_1, B_2$ are distinct bases and $b_1\in B_1 \setminus B_2$,
then there exists an element $b_2\in B_2 \setminus B_1$ such that
$(B_1\setminus \{b_1\}) \cup \{b_2\}$ is a basis.
\end{defn}

\emph{Matroid theory} originated in the 1930's as a combinatorial model
that keeps track of, and abstracts, the dependence relations among
a set of vectors.  

\begin{definition}\label{def:realizable}
Any full-rank 
	$k\times n$ matrix $C$  (with entries in a field
	$\mathbb{K}$), and consequently
	any point $C\in \Gr_{k,n}(\mathbb{K})$, gives rise to a matroid
$\M(C):=([n],\B)$, where $\B = \{I \in {[n] \choose k} \ \vert \ 
p_I(C) \neq 0\}$.  Such matroids 
	are 
	called \emph{realizable} or \emph{representable}
	over $\mathbb{K}$.
\end{definition}

\begin{example}\label{ex:C}
	Consider the full rank matrix $$C=\begin{pmatrix} 
		1 & \ 0 & \ -1  & \ -2\\
		0 & \ 1 & \ 2  & \ 4
	\end{pmatrix} \hspace{.5cm}
	\text{ (or the corresponding point $C\in \Gr_{2,4}$).}
	$$
	Here $p_{12}(C)=1$, $p_{13}(C)=2$, $p_{14}(C)=4$,
	$p_{23}(C)=1$, $p_{24}(C)=2$, and $p_{34}(C)=0$.

	The corresponding matroid is  $\M(C)=\{[4], \B\}$ where 
	$\B=\{12, 13, 14, 23, 24\}$.
\end{example}

In what follows, we will 
be concerned with the \emph{real} Grassmannian
$\Gr_{k,n} = \Gr_{k,n}(\R)$.    
While every 
full rank matrix gives rise to a matroid, there are many matroids
which are \emph{not} realizable (say over $\R$), that is, they cannot 
be realized by (real) matrices.
The \emph{non-Pappus matroid} is  a 
matroid which is not realizable over any field.

The \emph{matroid stratification} of the Grassmannian is the decomposition
of $\Gr_{k,n}$ into strata consisting of all points with 
the same matroid.
While this stratification has many beautiful properties \cite{GGMS},
 we also know that by Mn\"{e}v's universality
theorem \cite{Mnev}, a matroid stratum
can have topology as bad as that of any algebraic variety!

One running theme in this article will be
that matroids and the matroid stratification of the Grassmannian
can exhibit pathological behavior, 
but when one adds the adjective ``positive'' to the 
picture, this bad behavior is replaced by the 
nicest possible statements.

\subsection{The positive Grassmannian} 
\begin{defn}\label{def:positroid}\cite{lusztig, postnikov}
We say that $V\in \Gr_{k,n}$ is \emph{totally nonnegative} 
	if (up to a global change of sign)
	$p_I(V) \geq 0$ for all $I \in {[n]\choose k}$.
Similarly, $V$ is \emph{totally positive} if $p_I(V) >0$ for all $I
	\in {[n] \choose k}$.
We let $\Grk$ and $\Gr_{k,n}^{>0}$ denote the set of 
totally nonnegative and totally positive elements of $\Gr_{k,n}$, respectively.  
$\Grk$ is called the \emph{totally nonnegative}  \emph{Grassmannian}, or 
	sometimes just the \emph{positive Grassmannian}.
\end{defn}

Note that the matrix $C$ from 
\cref{ex:C} 
represents an element of 
$\Gr_{2,4}^{\geq 0}$.

The positive and nonnegative parts of a generalized
partial flag variety $G/P$ were
first introduced by
 Lusztig \cite{lusztig}, who gave a Lie-theoretic definition
 of $(G/P)_{>0}$ and  $(G/P)_{\geq 0} :=\overline{(G/P)_{>0}}$.
Postnikov \cite{postnikov} subsequently defined 
$\Grk$
 as in \cref{def:positroid}.
These  definitions agree when $G/P=\Gr_{k,n}$
 \cite{rietsch_private},
 \cite[Corollary 1.2]{talaska_williams}.

While the positive Grassmannian was introduced rather recently, the 
theory 
of totally positive matrices is much older.  In fact one can use
results of Gantmakher and Krein
\cite{gantmakher_krein_translation} from 1950
to characterize
 $\Grk$ and $\Gr_{k,n}^{>0}$ in terms of sign variation \cite{karp}, as follows.

\begin{defn}\label{defn_var}
Given $v\in\mathbb{R}^n$, let $\var(v)$ be the number of sign changes of $v$, when $v$ is viewed as a sequence of $n$ numbers and zeros are ignored. 
	We also define
$$
\overline{\var}(v) := \max\{\var(w) : \text{$w\in\mathbb{R}^n$ such that $w_i = v_i$ for all $i\in [n]$ with $v_i\neq 0$}\},
$$
i.e.\ $\overline{\var}(v)$ is the maximum number of sign 
	changes 
	after we choose a sign for each $v_i=0$.
\end{defn}
For example, if $v := (2, 0, 2, -1)\in\mathbb{R}^4$, then $\var(v) = 1$ and $\overline{\var}(v) = 3$.

The following result is based on 
\cite[Theorems V.3, V.7, V.1, V.6]{gantmakher_krein_translation}.
\begin{thm}[{\cite[Theorem 1.1]{karp}}]
	\label{gantmakher_krein}
	Let $V\in\Gr_{k,n}$ 
	with orthogonal complement $V^{\perp}\in \Gr_{n-k,n}$.\\
(i) $V\in\Gr_{k,n}^{\ge 0}\iff\var(v)\le k-1\text{ for all }v\in V\iff\overline{\var}(w)\ge k\text{ for all }w\in V^\perp\setminus\{0\}$. \\
(ii) $V\in\Gr_{k,n}^{>0}\iff\overline{\var}(v)\le k-1\text{ for all }v\in V\setminus\{0\}\iff\var(w)\ge k\text{ for all }w\in V^\perp\setminus\{0\}$.
\end{thm}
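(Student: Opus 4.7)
The plan is to reduce all four equivalences to one linear-algebraic engine and two auxiliary structural facts. The engine is the Laplace-type identity
$$\sum_{\ell=1}^{k+1}(-1)^{\ell-1}v_{j_\ell}\,p_{J\setminus\{j_\ell\}}(C)=0,$$
valid for every $v\in V=\rowspan(C)$ and every $(k+1)$-subset $J=\{j_1<\cdots<j_{k+1}\}\subseteq[n]$, which arises from Laplace expansion along the top row of the rank-at-most-$k$ matrix $\begin{pmatrix}v\\ C\end{pmatrix}$ restricted to columns $J$. The auxiliary facts are (a) the Plücker duality $p_I(V^\perp)=\pm\,p_{[n]\setminus I}(V)$ with an alternating-sign pattern, implying $V\in\Grk\iff\sigma(V^\perp)\in\Gr_{n-k,n}^{\ge 0}$ where $\sigma=\mathrm{diag}(1,-1,1,\dots,(-1)^{n-1})$, and (b) the combinatorial identity $\var(v)+\overline{\var}(\sigma(v))=n-1$ for every $v\in\R^n\setminus\{0\}$, verified by a direct case check on how the alternating sign-flip interchanges adjacent same-sign and strict-change patterns.

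I would first prove the forward direction of (ii): if $V\in\Gr_{k,n}^{>0}$ and $v\in V\setminus\{0\}$ has $\overline{\var}(v)\ge k$, derive a contradiction. The key preliminary is that any nonzero $v\in V\in\Gr_{k,n}^{>0}$ has at most $k-1$ zero entries, since $k$ zeros would force $k$ columns of $C$ to be dependent, killing a Plücker coordinate. Then for any filling $w$ witnessing $\overline{\var}(v)\ge k$, with $k+1$ strictly-alternating nonzero entries at $j_1<\cdots<j_{k+1}$, at least one $j_\ell$ must lie in the support of $v$; after a global sign change so that $(-1)^{\ell-1}w_{j_\ell}>0$, the Laplace identity applied to $v$ expresses $0$ as a sum of nonnegative terms with at least one strictly positive, a contradiction. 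The forward direction of (i) then follows by density: $\Gr_{k,n}^{>0}$ is dense in $\Grk$, and $\var$ is lower semicontinuous, so if $V\in\Grk$ is the limit of $V_m\in\Gr_{k,n}^{>0}$ and $v\in V$ is the limit of $v_m\in V_m$, then $\var(v)\le\liminf_m\var(v_m)\le\liminf_m\overline{\var}(v_m)\le k-1$.

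For the converse directions I would fix $I\in\binom{[n]}{k}$ and produce a test vector $v\in V$ whose sign pattern forces $p_I(C)\ge 0$ (resp.\ $>0$). In the pivoted form $\tilde{C}$ with $\tilde{C}|_I=\Id_k$ (available when $p_I(C)\ne 0$), the rows of $\tilde{C}$ are elements of $V$ whose off-$I$ entries are $\pm p_{I\triangle\{i_\ell,j\}}(C)/p_I(C)$; the bound $\var(v)\le k-1$ (resp.\ $\overline{\var}(v)\le k-1$) on these rows translates into sign constraints on the ratios, and sweeping over all $I$ yields the conclusion. Finally, to transfer from $V$ to $V^\perp$ I would apply the established $V$-side characterizations to $\sigma(V^\perp)\in\Gr_{n-k,n}$: by (a), $V\in\Grk\iff\sigma(V^\perp)\in\Gr_{n-k,n}^{\ge 0}\iff\var(\sigma(w))\le n-k-1$ for all $w\in V^\perp\setminus\{0\}$, which by (b) is equivalent to $\overline{\var}(w)\ge k$. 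The strict case (ii) is analogous.

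The main obstacle will be the converse direction in the nonnegative case (i): near the boundary of $\Grk$, many Plücker coordinates vanish, so the pivoted form for a chosen $I$ may not exist, and the test-vector argument must be adapted. A cleaner route is to argue the contrapositive by using a three-term Plücker relation and the hypothesis $\var(v)\le k-1$ to rule out opposing signs among $p_I(C)$'s sharing a common $(k-1)$-face, possibly supplemented by a perturbation through $\Gr_{k,n}^{>0}$. Verifying the sign conventions in (a) and the edge cases in (b) (with leading or trailing runs of zeros in $v$) is routine but must be handled carefully.
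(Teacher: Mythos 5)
The paper does not contain a proof of this theorem --- it is cited from Karp's work (which restates classical results of Gantmakher--Krein) --- so I am evaluating your proposal on its own merits rather than against an argument in the paper. Your setup is sound: the Laplace-expansion engine is correct, the key preliminary (a nonzero $v$ in a totally positive $V$ has at most $k-1$ zeros) is correct, and the contradiction argument for the forward direction of (ii) is a complete proof. The forward direction of (i) via density of $\Gr_{k,n}^{>0}$ in $\Gr_{k,n}^{\geq 0}$ and lower semicontinuity of $\var$ is fine, as is the transfer to $V^\perp$ using facts (a) and (b) and the observation $(\sigma(V^\perp))^\perp = \sigma(V)$. For the converse of (ii), your pivoting sketch can be made to work, but it omits an essential first step: the hypothesis $\overline{\var}(v)\le k-1$ must first be used to show \emph{every} Pl\"ucker coordinate is nonzero (if $p_I=0$ there is a nonzero $v\in V$ vanishing on $I$, and then $\overline{\var}(v)\ge k$ by fact (b) since $\sigma(v)$ has at most $n-k$ nonzero entries); only then can one pivot at each $I$ and propagate a common sign along the connected basis-exchange graph.

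The converse of (i) is a genuine gap, and the pivoted-row/Cramer-vector approach is not merely awkward near the boundary --- it is insufficient in principle, because the variation hypothesis must be applied to \emph{all} of $V$, not just distinguished vectors. Take $C = \begin{pmatrix} 1 & 0 & -1 & -1 \\ 0 & 1 & -1 & -1 \end{pmatrix}\in\Gr_{2,4}$, which has $p_{12}=1>0$ and $p_{13}=-1<0$, hence is not in $\Gr_{2,4}^{\geq 0}$. Every row of every pivoted form of $C$ and every Cramer-type vector vanishing on a $1$-subset has $\var \leq 1 = k-1$; the hypothesis is violated only by general combinations such as $c_1 - 2c_2 = (1,-2,1,1)$, which has $\var = 2$. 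So neither pivoting nor the three-term Pl\"ucker relation on adjacent bases will close the argument, and ``a perturbation through $\Gr_{k,n}^{>0}$'' is not a supplement but the entire substance of the proof, which must be justified non-circularly. The standard mechanism is the variation-diminishing property of strictly totally positive kernels: choose a one-parameter family of strictly TP matrices $A_\epsilon \to I$ (e.g.\ a discretized Gaussian), so $\overline{\var}(vA_\epsilon) \leq \var(v) \leq k-1$ for all nonzero $v\in V$, whence $VA_\epsilon \in \Gr_{k,n}^{>0}$ by the (already proved) converse of (ii), and then $V = \lim_\epsilon VA_\epsilon$ has all Pl\"ucker coordinates $\geq 0$ by continuity. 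Without this or an equivalent device, your proposal does not prove the converse of part (i).
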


\subsection{The positroid cell decomposition}

Despite the fact that the topology of matroid strata can be very bad,
Postnikov realized that if
one intersects these strata with the positive Grassmannian,
one obtains a \emph{cell decomposition} \cite{postnikov}.
In fact, it is a regular CW decomposition \cite{PSW, Williams, RW, PKL}.

\begin{theorem} \cite{postnikov}
For $\M\subseteq \binom{[n]}{k}$, let 
$$S_{\M}:= \{V \in \Grk \ \vert \ p_I(V)>0 \text{ if and only if }
I\in \M\}.$$
	Then $\Grk = \cup S_{\M}$ is a cell decomposition, i.e. 
	each $S_{\M}$ is an open ball.

If $S_\M\neq\emptyset$, we call $\M$ a \emph{positroid} and $S_\M$ its \emph{positroid cell}.
\end{theorem}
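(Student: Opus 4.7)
The plan is to combine a tautological disjoint-union statement with an explicit positive parametrization of each nonempty stratum. That $\Grk = \bigcup_\M S_\M$ is a disjoint union is immediate from the definition: every $V\in\Grk$ lies in exactly one $S_\M$, namely the one with $\M=\{I\in\binom{[n]}{k}: p_I(V)>0\}$. The real content is to show that each nonempty $S_\M$ is homeomorphic to an open ball.

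Following Postnikov, I would first pin down which $\M$ are realized. The nonempty strata should be indexed by a combinatorial family of \emph{positroids}, which can be encoded bijectively by decorated permutations of $[n]$, or equivalently by $\Le$-diagrams fitting inside a $k\times(n-k)$ rectangle, or by move-equivalence classes of reduced plabic graphs. Each such object carries a natural nonnegative integer statistic $d$ (for instance, the number of $+$'s in the associated $\Le$-diagram), which will serve as the dimension of the cell.

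For each positroid $\M$, I would construct a parametrization $\phi_\M\colon \R_{>0}^d \to \Grk$ using the boundary measurement map of a reduced plabic graph $G_\M$ (or, equivalently, a network built from the $\Le$-diagram). Each face of $G_\M$ carries a positive real weight, and the Pl\"ucker coordinate $p_I(\phi_\M(t))$ is computed as a Lindstr\"om--Gessel--Viennot sum over non-crossing families of paths from the boundary sources indexed by $I$ to the remaining boundary vertices. Because $G_\M$ is reduced, this sum is subtraction-free in the parameters, hence a positive polynomial; and a combinatorial argument identifies exactly those $I$ for which the sum is nonzero as the bases of $\M$. Consequently $\phi_\M$ factors through $S_\M$.

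The heart of the proof, and its main obstacle, is showing that $\phi_\M$ is a homeomorphism onto $S_\M$. For injectivity I would invert the face weights as explicit monomials in ``twisted'' ratios of Pl\"uckers --- essentially a cluster chart on the open positroid variety. Surjectivity then follows by matching $d$ to the dimension of $S_\M$ and using that every $V\in S_\M$ has the same vanishing pattern as the image of $\phi_\M$, together with an analytic-continuation or degeneration argument to produce a preimage $t$. The subtle point throughout is that combinatorics (which bases lie in $\M$) and algebra (which Pl\"uckers are forced to vanish on the image) agree; this coincidence relies on the subtraction-free form of the LGV formula, which in turn rests on reducedness of $G_\M$. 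Continuity of the inverse, and hence the fact that $S_\M$ is genuinely a topological ball rather than merely a continuous bijective image of one, falls out of the cluster chart description.
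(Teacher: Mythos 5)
Your overall route is the right one and is the same as Postnikov's: tautological disjointness, index nonempty strata by positroids (decorated permutations / $\Le$-diagrams / reduced plabic graphs), and parametrize each nonempty $S_\M$ by $\R_{>0}^d$ via the boundary measurement map, with subtraction-free (LGV/Lindstr\"om) formulas for Pl\"uckers doing the work. But two steps that you treat as afterthoughts are actually the crux of the argument, and as written neither is closed.

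First, you write ``I would first pin down which $\M$ are realized'' and then proceed as if every nonempty $S_\M$ already carries a $\Le$-diagram or reduced plabic graph. That is exactly the nontrivial \emph{exhaustion} step: you must show that for every $V\in\Grk$, the set $\{I : p_I(V)>0\}$ is a positroid, i.e.\ arises from some $\Le$-diagram. Postnikov does this by reading a $\Le$-diagram (equivalently a Grassmann necklace, or a pivot pattern) directly off the matrix $V$, using the total nonnegativity to control where pivots can occur. Without this step you have parametrized a family of cells but have not shown they cover $\Grk$, so you have not proved the decomposition.

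Second, the surjectivity argument does not work as stated. Saying that $d$ ``matches the dimension of $S_\M$'' presupposes you already know $S_\M$ is a $d$-manifold, which is part of what you are trying to prove; and neither ``analytic continuation'' nor a vague ``degeneration'' produces a point of the \emph{open positive} orthant $\R_{>0}^d$ mapping to a given $V\in S_\M$. The correct move, and the one Postnikov makes, is to produce an \emph{explicit inverse}: given any $V\in S_\M$, one recovers the (gauge-fixed) edge or face weights of the network $G_\M$ as explicit subtraction-free rational functions of the Pl\"ucker coordinates of $V$, and the positivity of those Pl\"uckers forces the recovered weights to be positive. This single formula yields injectivity and surjectivity simultaneously, and its continuity gives the homeomorphism. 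If you want to use the cluster-chart language you invoke for injectivity, then use the same chart to produce the preimage; you should not separate the two.

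A smaller remark: you describe the boundary measurements as sums over noncrossing path families from ``sources indexed by $I$.'' In Postnikov's setup the source set is fixed by the graph and the target set (or the choice of which boundary vertices are ``used'') varies with $I$; be careful that you state the LGV sum consistently with whichever convention you fix, since the reduction to a subtraction-free polynomial hinges on the noncrossing structure being forced by planarity.
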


More generally, 
Rietsch  
gave
a cell decomposition of $(G/P)_{\geq 0}$ \cite{rietsch}.
When $G/P = \Gr_{k,n}$, the two cell decompositions agree 
	\cite[Corollary 1.2]{talaska_williams}.


As shown in \cite{postnikov} and explained below,
the cells of $\Grk$
can be indexed by 
  combinatorial objects such as
	\emph{decorated permutations} $\pi$
	or move-equivalence classes of \emph{plabic graphs} $G$,
	see e.g.\ \cite[Chapter 7]{FWZ}.  We will correspondingly 
	 refer to such cells as $S_{\pi}$ and $S_G$.


\begin{defn}\label{defn:decperm}
A \emph{decorated permutation} on $[n]$ is a permutation $\pi\in S_n$
 whose fixed points are each coloured either black (``loop'') or white (``coloop''). We denote a black fixed point $i$ by $\pi(i) = \underline{i}$, and a white fixed point $i$ by $\pi(i) = \overline{i}$.
An \emph{anti-excedance} of a decorated permutation $\pi$ 
is an element $i \in [n]$ such that either $\pi^{-1}(i) > i$ or $\pi(i)=\overline{i}$.  
We say that a decorated permutation
       on $[n]$ is of
       \emph{type $(k,n)$} if it has $k$ anti-excedances.
\end{defn}

For example, $\pi = (3,\underline{2},5,1,6,8,\overline{7},4)$ has
a loop in position $2$ and a coloop in position $7$.
Its anti-excedances are $1$, $4$, and $7$.

	\begin{definition}
To a $k\times n$ matrix $C$ 
	with columns $(c^1,\dots,c^n)$ representing 
 an element of $\Grk$,
we associate a decorated 
		permutation $\pi:=\pi_C$ of type $(k,n)$
		as follows.
We set $\pi(i):=j$ to be the label of the first column $j$ such that
$c^i \in \spn \{c^{i+1}, c^{i+2},\dots, c^j\}$, where the columns
are listed in cyclic order (going from $c^n$ to $c^1$ if $i+1>j$).
If $c^i=\mathbf{0}$, then  
$i$ is a \emph{loop} of matroid $\M(C)$ 
and we set $\pi(i)=\underline{i}$,
and if $c^i$ is not in the span of the other column
vectors,  then $i$ is a \emph{coloop} of $\M(C)$ and we set $\pi(i)=\overline{i}$. 
\end{definition}
One can show that the construction above gives a well-defined function 
from $\Grk$ to decorated permutations of type
$(k,n)$.  
If $C$ is the matrix from \cref{ex:C},
then $\pi_C=(3,1,4,2)$.

\begin{proposition}
Let $\pi$ be a decorated permutation  of type $(k,n)$, and
	let $$S_{\pi} = \{C \in \Grk \ \vert \ \pi_C = \pi\}.$$
	Then $S_{\pi}$ is a positroid cell, and all positroid cells
	of $\Grk$ have the form $S_{\pi}$ for some decorated permutation 
	$\pi$ of type 
	$(k,n)$.
\end{proposition}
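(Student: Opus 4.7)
The plan is to show that the fibers of the map $C \mapsto \pi_C$ on $\Grk$ coincide with the positroid cells $S_{\M}$, via a bijection between positroids of rank $k$ on $[n]$ and decorated permutations of type $(k,n)$. First I would verify that $\pi_C$ is a well-defined decorated permutation of type $(k,n)$: for each $i$ such that $c^i$ is neither zero nor a coloop, the dimensions of the cyclic sequence $\spn(c^{i+1}), \spn(c^{i+1}, c^{i+2}), \ldots$ grow from $0$ up to $k$, so there is a unique smallest $j$ in cyclic order with $c^i \in \spn(c^{i+1}, \ldots, c^j)$. That $\pi_C$ is a bijection of $[n]$ and has exactly $k$ anti-excedances follows from tracking which of the $k$ rank jumps in this cyclic filtration occurs at which column. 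Moreover the recipe for $\pi_C(i)$ references only which sub-collections of columns are linearly independent, so $\pi_C$ depends only on $\M(C)$, yielding a well-defined map $\M \mapsto \pi_{\M}$ from realizable positroids to decorated permutations of type $(k,n)$.

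The heart of the argument is to show this map is invertible, i.e.\ that $\M(C)$ can be reconstructed from $\pi_C$ for every $C \in \Grk$. For this I would invoke Postnikov's notion of the \emph{Grassmann necklace}: associate to $\pi$ the sequence $\mathcal{I}(\pi) = (I_1, \ldots, I_n)$ of $k$-subsets read off from the anti-excedances of the cyclic rotations of $\pi$. Unwinding the definition of $\pi_C$ shows that $I_i$ is the lexicographically smallest basis of $\M(C)$ with respect to the cyclic order $\leq_i$ starting at $i$. The key positivity input is then to show that, for $C \in \Grk$, one has $p_J(C) > 0$ for every $k$-subset $J$ that dominates each $I_i$ in the Gale order $\leq_i$; this is proved by running three-term Pl\"ucker relations and using $p_I(C) \geq 0$ to transfer positivity from the $I_i$ to other subsets. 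It follows that $\M(C)$ is exactly the set of $k$-subsets dominating every $I_i$, and hence is determined by $\pi$. Combining the above, $\pi_C$ and $\M(C)$ determine each other on $\Grk$, giving $S_\pi = S_{\M(\pi)}$ and proving both halves of the proposition at the level of underlying sets.

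Finally, to conclude that each $S_\pi$ is genuinely a cell (homeomorphic to an open ball) and, in particular, nonempty, I would invoke Postnikov's parametrization: for any reduced plabic graph $G$ of type $\pi$, the boundary-measurement map induces a homeomorphism $\RR_{>0}^d \to S_\pi$, where $d$ is the number of faces of $G$ (after suitable gauge reduction). The existence of such a $G$ for every decorated permutation of type $(k,n)$ also guarantees surjectivity of $\M \mapsto \pi_{\M}$. The main obstacle is the reconstruction step: a general matroid is \emph{not} determined by its Grassmann necklace, and the full matroid stratification of $\Gr_{k,n}$ is pathological by Mn\"ev's theorem. It is precisely total nonnegativity that rescues the situation, by forcing every subset dominating the necklace to actually appear as a basis.
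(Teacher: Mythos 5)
The paper itself gives no proof of this proposition; it is one of the standard results quoted from Postnikov's paper, so the comparison is with his argument rather than with anything on the page. Your outline is a correct high-level plan and is in the spirit of the standard proof, but two steps deserve more care than you give them.

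First, the bijectivity of $\pi_C$ as a map $[n]\to[n]$ is not quite as immediate as ``tracking which of the $k$ rank jumps occurs at which column.'' What the rank-jump argument shows directly is that $\pi_C(i)$ is well defined for each $i$ that is not a loop or coloop; that the resulting map is a \emph{bijection} (each $j$ is hit exactly once) requires a separate argument, essentially a careful analysis of the cyclic rank matrix. Postnikov does this explicitly, and it is worth recording that this is where the claim ``$\pi_C$ has exactly $k$ anti-excedances'' also really lives.

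Second, and more significantly, the positivity lemma you invoke---that $p_J(C) > 0$ for every $J$ that dominates the Grassmann necklace in each cyclic Gale order---is a deep statement, not something that falls out of ``running three-term Pl\"ucker relations.'' Stated for arbitrary positroids, this is Oh's theorem (conjectured by Postnikov); even restricted to the realizable case $C\in\Grk$, there is no short elementary argument of the type you sketch. If you want to keep your route you should cite Oh's theorem as an input. Alternatively, to prove the proposition you do not actually need the explicit dominance description of $\M(C)$; Postnikov's original proof establishes the injectivity of $\M\mapsto\pi_\M$ by showing that the decorated permutation determines the $\Gamma$ (Le) diagram, and that the boundary-measurement/Le-network parametrization then recovers the cell (and hence the positroid) from the diagram. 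That route sidesteps the dominance characterization entirely and simultaneously gives you the ``homeomorphic to an open ball'' statement that you invoke at the end. With those two caveats noted, the structure of your argument---$\pi_C$ depends only on $\M(C)$, the totally nonnegative hypothesis forces the necklace to determine $\M(C)$, and plabic graphs supply both the cell structure and the surjectivity onto decorated permutations of type $(k,n)$---is the right one.
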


\begin{figure}[h]\centering
	\resizebox{1.5in}{!}{
	\begin{tikzpicture}
		\draw (0,0) circle (2.15cm);
	\node[draw=none, minimum size=4.3cm, regular polygon, regular polygon sides=9] (s) {};
	\node[draw=none, minimum size=3.3cm, regular polygon, regular polygon sides=9] (a) {};
	\foreach \x in {1,2,...,9}
	\fill (s.corner \x) circle[radius=1.2pt];
	\foreach \x in {1,2,...,9}
	\fill (a.corner \x) circle[radius=2pt];
	\foreach \x in {1,2,...,9}
        \draw[black,thick](a.corner \x)--(s.corner \x);
	 \node[shift=(s.corner 1), anchor=south] {$1$};
	 \node[shift=(s.corner 2),anchor=south] {$9$};
	 \node[shift=(s.corner 3),anchor=east] {$8$};
	 \node[shift=(s.corner 4),anchor=east] {$7$};
	 \node[shift=(s.corner 5),anchor=north] {$6$};
	 \node[shift=(s.corner 6),anchor=north] {$5$};
	 \node[shift=(s.corner 7),anchor=west] {$4$};
	 \node[shift=(s.corner 8),anchor=west] {$3$};
	 \node[shift=(s.corner 9),anchor=west] {$2$};
        \draw[black,thick](-.8,.8)--(a.corner 1);
        \draw[black,thick](-.8,.8)--(a.corner 2);
        \draw[black,thick](-.8,.8)--(a.corner 4);
        \draw[black,thick](-1.4,.3)--(a.corner 2);
        \draw[black,thick](-1.4,.3)--(a.corner 3);
        \draw[black,thick](-1.4,.3)--(a.corner 4);
        \draw[black,thick](0.5,.3)--(a.corner 4);
        \draw[black,thick](0.5,.3)--(a.corner 9);
        \draw[black,thick](0.5,.3)--(a.corner 8);
        \draw[black,thick](0.3,-1.1)--(a.corner 4);
        \draw[black,thick](0.3,-1.1)--(a.corner 6);
        \draw[black,thick](0.3,-1.1)--(a.corner 7);
        \draw[black,thick](0.8,-.4)--(a.corner 4);
        \draw[black,thick](0.8,-.4)--(a.corner 7);
        \draw[black,thick](0.8,-.4)--(a.corner 8);
	\filldraw[color=black,fill=white] (-.8,.8) circle (2pt);
	\filldraw[color=black,fill=white] (-1.4,.3) circle (2pt);
	\filldraw[color=black,fill=white] (0.5,.3) circle (2pt);
	\filldraw[color=black,fill=white] (0.3,-1.1) circle (2pt);
	\filldraw[color=black,fill=white] (0.8,-.4) circle (2pt);
\end{tikzpicture}
	}
	\caption{A plabic graph $G$ with  
	$\pi_G = (8,5,9,2,3,\underline{6},4,1,7)$. It has
	a black lollipop at $6$.}
        \label{fig:plabic}
\end{figure}
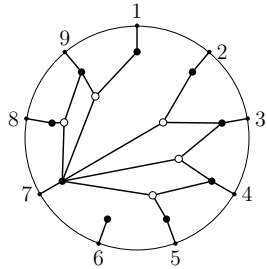
\begin{defn}\label{def:plabic}
A \emph{planar bicolored graph} 
(or 
\emph{plabic graph})
is a planar graph $G$ properly 
embedded into a closed disk
                with (uncolored) vertices lying on the
                   boundary of the disk labeled $1,\dots, n$
		   in clockwise order
                       for some positive $n$, such that: 
                   each  boundary vertex is incident to a single
                        edge;
                        each internal vertex is colored black or white;
                        and each internal vertex is connected by
                        a path to some boundary vertex.
See \cref{fig:plabic}.

	If a boundary vertex $i$ is attached to an edge whose other endpoint
	is a leaf, we call this component a \emph{lollipop}.
	\emph{We will assume that $G$ has no internal leaves except for lollipops}.
\end{defn}

We next describe some local moves on plabic graphs, see
 \cref{fig:M1}.

(M1) Square Move.  If there is a square formed by
four trivalent vertices whose colors alternate,
then we can switch the
colors of these four vertices.

(M2) 
Two adjacent internal vertices of the same color can be merged.
Alternatively, we can split an internal vertex into two vertices of the 
same color joined by an edge.

(M3) 
We can remove/add degree $2$ vertices, as shown.

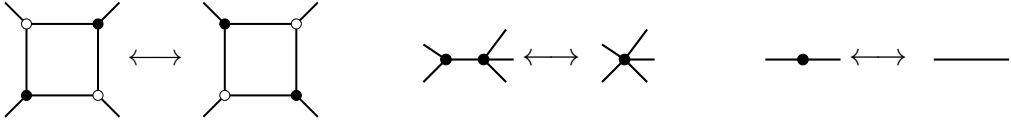
\begin{figure}[h]
\resizebox{1.7in}{!}{
\begin{tikzpicture}
	\draw[black, thick] (0,0) rectangle (1,1);
	\draw[black,thick](0,0)--(-.3,-.3);
	\draw[black,thick](0,1)--(-.3,1.3);
	\draw[black,thick](1,1)--(1.3,1.3);
	\draw[black,thick](1,0)--(1.3,-.3);
	\filldraw[color=black,fill=black] (0,0) circle (2pt);
	\filldraw[color=black,fill=black] (1,1) circle (2pt);
	\filldraw[color=black,fill=white] (0,1) circle (2pt);
	\filldraw[color=black,fill=white] (1,0) circle (2pt);
	\draw (1.8, .5) node {$\longleftrightarrow$};
\end{tikzpicture}
\begin{tikzpicture}
	\draw[black, thick] (0,0) rectangle (1,1);
	\draw[black,thick](0,0)--(-.3,-.3);
	\draw[black,thick](0,1)--(-.3,1.3);
	\draw[black,thick](1,1)--(1.3,1.3);
	\draw[black,thick](1,0)--(1.3,-.3);
	\filldraw[color=black,fill=white] (0,0) circle (2pt);
	\filldraw[color=black,fill=white] (1,1) circle (2pt);
	\filldraw[color=black,fill=black] (0,1) circle (2pt);
	\filldraw[color=black,fill=black] (1,0) circle (2pt);
\end{tikzpicture}}
\quad \quad \quad
\begin{tikzpicture}
	\draw[black,thick](-.3,.7)--(0,.5)--(.5,.5)--(.8,.9);
	\draw[black,thick](-.3,.2)--(0,.5);
	\draw[black,thick](.9,.5)--(.5,.5)--(.8,.2);
	\filldraw[color=black,fill=black] (0,.5) circle (2pt);
	\filldraw[color=black,fill=black] (.5,.5) circle (2pt);
	\filldraw[color=white,fill=white] (0,-.2) circle (2pt);
	\draw (1.4, .5) node {$\longleftrightarrow$};
\end{tikzpicture}
\begin{tikzpicture}
	\draw[black,thick](-.3,.7)--(0,.5)--(.3,.9);
	\draw[black,thick](-.3,.2)--(0,.5);
	\draw[black,thick](.4,.5)--(0,.5)--(.3,.2);
	\filldraw[color=black,fill=black] (0,.5) circle (2pt);
	\filldraw[color=white,fill=white] (0,-.2) circle (2pt);
\end{tikzpicture} \quad \quad \quad
\begin{tikzpicture}
	\draw[black,thick](0,0)--(1,0);
	\filldraw[color=black,fill=black] (.5,0) circle (2pt);
	\filldraw[color=white,fill=white] (0,-.7) circle (2pt);
	\draw (1.5, 0) node {$\longleftrightarrow$};
\end{tikzpicture}
\begin{tikzpicture}
	\draw[black,thick](0,0)--(1,0);
	\filldraw[color=white,fill=white] (0,-.7) circle (2pt);
\end{tikzpicture}
\caption{Moves (M1), (M2), (M3) on plabic graphs.}
\label{fig:M1}
\end{figure}

\begin{definition}\label{def:move}
Two plabic graphs are \emph{move-equivalent} if they can be obtained
from each other by moves (M1)-(M3).  
A plabic graph 
is \emph{reduced} if there is no graph move-equivalent to it
	in which 
 two adjacent vertices $u$ and $v$ are connected by more
	than one edge.
\end{definition}

	
\cref{def:rules} and 
\cref{prop:matching} give several ways 
to read a positroid off of a plabic graph.  The positroid depends only on 
the move-equivalence class of the plabic graph.

	\begin{defn}\label{def:rules}
		Let $G$ be a reduced plabic graph as above with boundary vertices $1,\dots, n$. For each boundary vertex $i\in [n]$, we follow a path along the edges of $G$ starting at $i$, turning (maximally) right at every internal black vertex, and (maximally) left at every internal white vertex. This path ends at some boundary vertex $\pi(i)$. By \cite[Section 13]{postnikov}, the fact that $G$ is reduced implies that each fixed point of $\pi$ is attached to a lollipop; we color each fixed point by the color of its lollipop. In this way we obtain the \emph{(decorated) trip permutation} $\pi_G = \pi$ of $G$. We say that $G$ is of \emph{type} $(k,n)$, where $k$ is the number of anti-excedances of $\pi_G$.
\end{defn}
In \cref{fig:plabic}
we have $\pi_G = 
	 (8,5,9,2,3,\underline{6},4,1,7)$, which has
	 $k= 5$ anti-excedances.

	\begin{theorem}[Fundamental theorem of reduced plabic graphs, 
		{\cite[Theorem 13.4]{postnikov}, 
		see also {\cite[Theorem 7.4.25]{FWZ}}}]
Let $G$ and $G'$ be reduced plabic graphs.  Then $G$ and $G'$ are 
move-equivalent if and only if $G$ and $G'$ have the same decorated
trip permutation.
	\end{theorem}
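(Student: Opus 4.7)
The ``only if'' direction is a routine local check. The plan is to verify that each of the moves (M1), (M2), (M3) preserves the decorated trip permutation. For (M2) and (M3) this is immediate: merging or splitting two adjacent same-colored vertices, or adding/removing a degree-two vertex of the appropriate color, does not alter how any strand turns as it passes through the affected region. For the square move (M1), one examines the four strands that enter the square and, applying the ``turn maximally right at black, turn maximally left at white'' rule case by case, checks that they exit the square at the same four edges before and after the recoloring. Since each move is confined to a small neighborhood, the global trip permutation is unchanged.

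The ``if'' direction is the substantive content. The plan is to exhibit a canonical reduced plabic graph for each decorated permutation of type $(k,n)$, and then to show that every reduced plabic graph with trip permutation $\pi$ can be transformed by moves (M1)--(M3) into the canonical representative. The natural canonical form uses \Le-diagrams: decorated permutations of type $(k,n)$ are in bijection with $0/+$ fillings of a Young diagram inside a $k\times(n-k)$ rectangle satisfying the \Le-condition, and from any such filling $L(\pi)$ one constructs, by placing a small trivalent ``hook'' at each $+$ box and joining these hooks by horizontal and vertical wires to the boundary, a reduced plabic graph $G_\pi$. A direct trip-tracing check shows $\pi_{G_\pi}=\pi$, so every decorated permutation of type $(k,n)$ is realized.

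It remains to prove that any reduced plabic graph $G$ with $\pi_G=\pi$ is move-equivalent to $G_\pi$. The plan is to induct on a suitable complexity invariant of $\pi$, for example the size of the Young diagram of $L(\pi)$ (or equivalently the length of $\pi$ in a chosen generating set for decorated permutations), with the base case being the trivial permutations realized by graphs consisting only of lollipops. In the inductive step, one identifies a reducible configuration near the boundary of $G$ corresponding to peeling off a corner box of the Young diagram of $L(\pi)$ --- typically an edge incident to a chosen boundary vertex --- and uses square moves together with (M2) and (M3) to bring $G$ into a form where this configuration sits in a standard position. Excising it then yields a smaller reduced plabic graph whose trip permutation is obtained from $\pi$ by a controlled local modification, so the inductive hypothesis applies.

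The main obstacle is carrying out this normalization step. Given an arbitrary reduced plabic graph $G$ with $\pi_G=\pi$, one must produce an explicit sequence of moves that brings a prescribed reducible configuration into standard position. The square moves (M1) are the only non-trivial tool available, and the argument requires careful tracking of how a sequence of such moves acts on the strand diagram, together with a verification that reducedness is preserved throughout. This is analogous to, but substantially more intricate than, the proof that the braid relations generate all equivalences among reduced words for a permutation in $S_n$, and it is the technical heart of the theorem.
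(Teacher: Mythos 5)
Your proposal follows the same strategy as Postnikov's proof of Theorem~13.4 (which the paper cites without reproducing a proof): check that moves (M1)--(M3) preserve the decorated trip permutation, exhibit the $\Le$-graph $G_\pi$ as a canonical reduced representative for each decorated permutation $\pi$, and then argue by induction on the dimension of the positroid cell (equivalently, the number of $+$'s in the $\Le$-diagram) that any reduced plabic graph with trip permutation $\pi$ can be normalized by moves to $G_\pi$. This is the approach of both Postnikov and the more detailed account in \cite{FWZ}, so your roadmap is sound.

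The one thing worth flagging is precisely what you already flag: the entire weight of the theorem sits on the normalization lemma in the inductive step, and your sketch does not yet contain the idea that makes it work. In Postnikov's argument the key is a careful analysis of the \emph{trips} (strands), showing that in a reduced graph one can always find a boundary edge configuration that, after a controlled sequence of square moves and contractions/expansions, detaches a lollipop or a single ``hook'' without creating parallel edges or a double-crossing that would destroy reducedness. The analogy to reduced words in $S_n$ is apt but, as you note, strictly harder: reducedness of a plabic graph is a global condition on trips rather than a local braid relation, so one must verify that the whole move sequence preserves it, not just the endpoints. A complete proof would need to state and prove this normalization lemma explicitly; without it the inductive step is a plan rather than an argument.
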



\begin{definition}\label{def:matching}
Let $G$ be a bipartite plabic graph in which each 
boundary vertex
	is incident to a white vertex.  
	An \emph{almost perfect matching} of $G$
is a
subset $M$ of edges such that each internal vertex is incident to
exactly one edge in $M$ (and each boundary vertex $i$ is incident
to either one or no edges in $M$).
        We let $\partial M = \{i \ \vert \ i \text{ is incident to an edge of }M\}$.
\end{definition}

Given a plabic graph, we can use move (M3) to ensure
that the resulting graph is bipartite and that 
 each boundary vertex is incident to a 
white vertex. 
(Note that we can think of such a graph as a 
bipartite graph $G$
 in which all boundary vertices are colored black.)

\begin{proposition}
	[{\cite[Proposition~11.7, Lemma~11.10]{postnikov}}]
	\label{prop:matching}
    Let $G$ be a bipartite plabic graph such that each 
boundary vertex is incident to a white vertex. 
Let $$\M(G) = \{\partial M \ \vert \ M \text{ an almost perfect matching of }G\}.$$
	If $\M(G)$ is nonempty,  then
	$\M(G)$ is the set of  bases of a positroid on $[n]$.
	Moreover, all positroids arise from plabic graphs.
\end{proposition}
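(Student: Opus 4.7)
The plan is to produce, from any assignment of positive edge weights $x_e>0$ on $G$, an explicit point $V(G,x) \in \Grk$ whose Pl\"ucker coordinates are the matching polynomials
$$p_I(V(G,x)) \ = \ \sum_{M :\,\partial M = I} \ \prod_{e \in M} x_e, \qquad I \in \binom{[n]}{k},$$
where $k$ is the common cardinality of $\partial M$ (forced by a bipartite degree count on the internal vertices). Since each monomial is positive, $V(G,x) \in \Grk$ and $p_I$ vanishes exactly when no matching has boundary $I$; the matroid of $V(G,x)$ is then $\M(G)$, which is therefore a positroid.

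The main work is constructing $V(G,x)$ and verifying the Pl\"ucker relations. The plan is to pick any $M_0 \in \M(G)$ and use it to induce a \emph{perfect orientation} of $G$: direct the edges of $M_0$ from white to black and the remaining edges from black to white. In this orientation each internal white vertex has outdegree one, each internal black vertex has indegree one, and the boundary sources are exactly $\partial M_0$. Let $V(G,x)$ be the row span of the $k \times n$ \emph{boundary measurement matrix} whose entry indexed by a source $i \in \partial M_0$ and column $j$ is a suitably signed weighted sum over directed paths from $i$ to $j$. The Lindstr\"om--Gessel--Viennot lemma then expresses each $k \times k$ minor as a signed sum over vertex-disjoint path families, and a bijection taking each such family to its symmetric difference with $M_0$ converts this expression into the matching formula above, with all signs coming out nonnegative. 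The Pl\"ucker relations follow from the analogous path-sum identities on LGV boundary matrices, proved by a sign-reversing involution on pairs of path families. This involution, together with the verification that planarity and the boundary-rooted structure of $G$ rule out obstructions to it, is the main technical obstacle.

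For the converse, the plan is to argue through decorated permutations. Every positroid $\M$ corresponds to a unique decorated permutation $\pi_{\M}$ of type $(k,n)$ by the proposition preceding the statement. Given $\pi_{\M}$, build a canonical reduced bipartite plabic graph $G_\pi$ with $\pi_{G_\pi}=\pi_{\M}$ via the Le-diagram construction (equivalently, a bridge decomposition). A direct inspection of the Le-graph, pairing its almost perfect matchings with positive fillings of the Le-diagram, then shows $\M(G_\pi)=\M$; alternatively, observe that $\M(G)$ is manifestly invariant under the moves (M1)--(M3), hence depends only on the move-equivalence class of $G$, which by the fundamental theorem of reduced plabic graphs is determined by $\pi_G$. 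Matching loop/coloop conventions completes the identification and exhibits every positroid as some $\M(G)$.
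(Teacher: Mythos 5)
Your proposal follows essentially the same route as the cited source; the paper does not supply its own proof of \cref{prop:matching} but defers to Postnikov, and your outline --- perfect orientation from a reference matching, boundary measurement matrix, Lindstr\"om--Gessel--Viennot, symmetric-difference bijection between flows and matchings, then Le-diagrams/bridges for the converse --- is exactly that machinery, also repackaged later in the paper as \cref{thm:Kasteleyn}. Two small points are worth fixing. First, under the paper's convention the boundary vertices of $G$ are \emph{black}, so with your orientation rule (matched edges white $\to$ black, unmatched black $\to$ white) the boundary vertices in $\partial M_0$ receive their unique edge and are \emph{sinks}, while the sources are $[n]\setminus\partial M_0$, of size $n-k$; you should either index the boundary measurement matrix by sinks, reverse the orientation, or dualize at the end, and your sentence ``the boundary sources are exactly $\partial M_0$'' is wrong as stated. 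Second, in the converse, the ``alternatively'' (move-invariance of $\M(G)$ plus the fundamental theorem of reduced plabic graphs) only shows that $\M(G)$ depends solely on $\pi_G$; it does not by itself identify $\M(G_\pi)$ with the given positroid $\M$, so it is not an independent alternative to the Le-diagram inspection but merely reduces that check to one representative per move class. With those adjustments the argument is the standard one and is sound.
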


See  \cref{Plabic1} for an example.

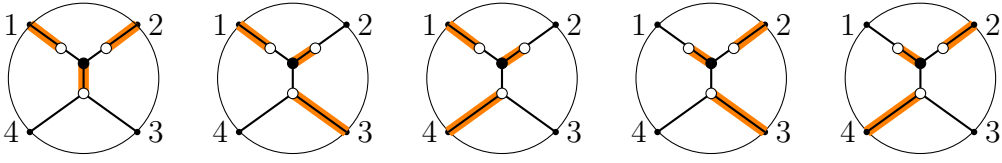
\begin{figure}[h]\centering
	\begin{tikzpicture}
	\node[draw=none, minimum size=2cm, regular polygon, regular polygon sides=4] (s) {};
	\foreach \x in {1,2,...,4}
	\fill (s.corner \x) circle[radius=1.2pt]; 
        \draw[orange, line width= 4pt](-.3,.4)--(s.corner 2);
        \draw[orange, line width= 4pt](.3,.4)--(s.corner 1);
        \draw[orange, line width= 4pt](0,.2)--(0,-.2);
	\draw (0,0) circle (1cm);
	 \node[shift=(s.corner 1),anchor=west] {$2$};
	 \node[shift=(s.corner 2),anchor=east] {$1$};
	 \node[shift=(s.corner 3),anchor=east] {$4$};
	 \node[shift=(s.corner 4),anchor=west] {$3$};
        \draw[black,thick](0,-.2)--(s.corner 3);
        \draw[black,thick](0,-.2)--(s.corner 4);
        \draw[black,thick](0,.2)--(s.corner 1);
        \draw[black, thick](0,.2)--(s.corner 2);
        \draw[black,thick](0,.2)--(0,-.2);
	\filldraw[color=black,fill=black] (0,.2) circle (2pt);
	\filldraw[color=black,fill=white] (0,-.2) circle (2pt);
	\filldraw[color=black,fill=white] (-.3,.4) circle (2pt);
	\filldraw[color=black,fill=white] (.3,.4) circle (2pt);
	\end{tikzpicture} 
	\hspace{.1cm} 
	\begin{tikzpicture}
		\draw (0,0) circle (1cm);
	\node[draw=none, minimum size=2cm, regular polygon, regular polygon sides=4] (s) {};
	\foreach \x in {1,2,...,4}
	\fill (s.corner \x) circle[radius=1.2pt];
        \draw[orange, line width= 4pt](-.3,.4)--(s.corner 2);
        \draw[orange, line width= 4pt](0,-.2)--(s.corner 4);
        \draw[orange, line width= 4pt](0,.2)--(.3,.4);
	 \node[shift=(s.corner 1),anchor=west] {$2$};
	 \node[shift=(s.corner 2),anchor=east] {$1$};
	 \node[shift=(s.corner 3),anchor=east] {$4$};
	 \node[shift=(s.corner 4),anchor=west] {$3$};
        \draw[black,thick](0,-.2)--(s.corner 3);
        \draw[black,thick](0,-.2)--(s.corner 4);
        \draw[black,thick](0,.2)--(s.corner 1);
        \draw[black,thick](0,.2)--(s.corner 2);
        \draw[black,thick](0,.2)--(0,-.2);
	\filldraw[color=black,fill=black] (0,.2) circle (2pt);
	\filldraw[color=black,fill=white] (0,-.2) circle (2pt);
	\filldraw[color=black,fill=white] (-.3,.4) circle (2pt);
	\filldraw[color=black,fill=white] (.3,.4) circle (2pt);
\end{tikzpicture} 
	\hspace{.1cm} 
	\begin{tikzpicture}
		\draw (0,0) circle (1cm);
	\node[draw=none, minimum size=2cm, regular polygon, regular polygon sides=4] (s) {};
	\foreach \x in {1,2,...,4}
	\fill (s.corner \x) circle[radius=1.2pt];
        \draw[orange, line width= 4pt](-.3,.4)--(s.corner 2);
        \draw[orange, line width= 4pt](0,-.2)--(s.corner 3);
        \draw[orange, line width= 4pt](0,.2)--(.3,.4);
	 \node[shift=(s.corner 1),anchor=west] {$2$};
	 \node[shift=(s.corner 2),anchor=east] {$1$};
	 \node[shift=(s.corner 3),anchor=east] {$4$};
	 \node[shift=(s.corner 4),anchor=west] {$3$};
        \draw[black,thick](0,-.2)--(s.corner 3);
        \draw[black,thick](0,-.2)--(s.corner 4);
        \draw[black,thick](0,.2)--(s.corner 1);
        \draw[black,thick](0,.2)--(s.corner 2);
        \draw[black,thick](0,.2)--(0,-.2);
	\filldraw[color=black,fill=black] (0,.2) circle (2pt);
	\filldraw[color=black,fill=white] (0,-.2) circle (2pt);
	\filldraw[color=black,fill=white] (-.3,.4) circle (2pt);
	\filldraw[color=black,fill=white] (.3,.4) circle (2pt);
\end{tikzpicture}  
	\hspace{.1cm} 
	\begin{tikzpicture}
		\draw (0,0) circle (1cm);
	\node[draw=none, minimum size=2cm, regular polygon, regular polygon sides=4] (s) {};
	\foreach \x in {1,2,...,4}
	\fill (s.corner \x) circle[radius=1.2pt];
        \draw[orange, line width= 4pt](.3,.4)--(s.corner 1);
        \draw[orange, line width= 4pt](0,-.2)--(s.corner 4);
        \draw[orange, line width= 4pt](0,.2)--(-.3,.4);
	 \node[shift=(s.corner 1),anchor=west] {$2$};
	 \node[shift=(s.corner 2),anchor=east] {$1$};
	 \node[shift=(s.corner 3),anchor=east] {$4$};
	 \node[shift=(s.corner 4),anchor=west] {$3$};
        \draw[black,thick](0,-.2)--(s.corner 3);
        \draw[black,thick](0,-.2)--(s.corner 4);
        \draw[black,thick](0,.2)--(s.corner 1);
        \draw[black,thick](0,.2)--(s.corner 2);
        \draw[black,thick](0,.2)--(0,-.2);
	\filldraw[color=black,fill=black] (0,.2) circle (2pt);
	\filldraw[color=black,fill=white] (0,-.2) circle (2pt);
	\filldraw[color=black,fill=white] (-.3,.4) circle (2pt);
	\filldraw[color=black,fill=white] (.3,.4) circle (2pt);
\end{tikzpicture} 
	\hspace{.1cm} 
	\begin{tikzpicture}
		\draw (0,0) circle (1cm);
	\node[draw=none, minimum size=2cm, regular polygon, regular polygon sides=4] (s) {};
        \draw[orange, line width= 4pt](.3,.4)--(s.corner 1);
        \draw[orange, line width= 4pt](0,-.2)--(s.corner 3);
        \draw[orange, line width= 4pt](0,.2)--(-.3,.4);
	\foreach \x in {1,2,...,4}
	\fill (s.corner \x) circle[radius=1.2pt];
	 \node[shift=(s.corner 1),anchor=west] {$2$};
	 \node[shift=(s.corner 2),anchor=east] {$1$};
	 \node[shift=(s.corner 3),anchor=east] {$4$};
	 \node[shift=(s.corner 4),anchor=west] {$3$};
        \draw[black,thick](0,-.2)--(s.corner 3);
        \draw[black,thick](0,-.2)--(s.corner 4);
        \draw[black,thick](0,.2)--(s.corner 1);
        \draw[black,thick](0,.2)--(s.corner 2);
        \draw[black,thick](0,.2)--(0,-.2);
	\filldraw[color=black,fill=black] (0,.2) circle (2pt);
	\filldraw[color=black,fill=white] (0,-.2) circle (2pt);
	\filldraw[color=black,fill=white] (-.3,.4) circle (2pt);
	\filldraw[color=black,fill=white] (.3,.4) circle (2pt);
\end{tikzpicture} 
	\caption{A bipartite plabic graph $G_1$ with 
	 $\pi_{G_1}=(3,1,4,2)$
	which has 
	five almost-perfect matchings. The corresponding positroid is 
	$([4], \M(G_1))$ where 
	$\M(G_1) = \{12, 13, 14, 23, 24\}$. }
        \label{Plabic1}
\end{figure}


Postnikov used plabic graphs to give parameterizations of cells
of $\Grk$ \cite{postnikov}; this result
can be recast in terms of \emph{flows} \cite{Talaska} or 
as a variant of a theorem of Kasteleyn
 \cite{Speyer}. 
\begin{theorem}[\cite{postnikov, Talaska, Speyer}]\label{thm:Kasteleyn}
Let $G$ be a bipartite plabic graph with 
 $n$ boundary vertices, all of which are colored black.
Suppose $G$ has at least one almost 
 perfect matching $M_0$, and let  $k = |\partial M_0|$.  
Let $w:\Edges(G) \to \R_{>0}$ be any weight function,
and for $M$  an almost perfect matching,
let $w(M) := \prod_{e\in M} w_e$, where 
	 $w_{e}$ denotes the weight 
of edge $e$.
Then there is a 
	$k\times n$ 
matrix 	$L=L(w)$ representing a point of 
	$\Grk$ 
	such that
	$$p_I(L) = 
	\sum_{M: \partial M = I} w(M) \hspace{.2cm} \text{ for all }I \in {[n] \choose k}.$$
	Moreover, if we let $w$ vary over weight functions, we obtain
	a positroid cell
	$$S_G:=\{L(w) \ \vert \ w:\Edges(G)\to \R_{>0}\}.$$
\end{theorem}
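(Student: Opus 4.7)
My plan is to build $L(w)$ as a Kasteleyn-type matrix and then verify the Plücker formula directly by expanding determinants into sums over almost perfect matchings; given the Plücker formula, the parameterization statement is essentially forced.

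First I would normalize $G$. Using move (M3) I may assume $G$ is bipartite with every internal vertex of degree at least two and every boundary vertex $i \in [n]$ connected by a single edge to an internal white vertex. Let $W$, $B$ denote the sets of internal white, respectively black, vertices; counting matching edges at whites versus blacks against $M_0$ gives $|W| - |B| = k$. Using planarity I fix a Kasteleyn signing $\epsilon : \mathrm{Edges}(G) \to \{\pm 1\}$, i.e.\ an assignment of signs such that the product around every bounded face (having $2\ell$ edges) is $(-1)^{\ell+1}$. Define the $|W| \times (|B| + n)$ matrix $\tilde L(w)$ with rows indexed by $W$, columns by $B \cup [n]$, and entry $\epsilon_e w_e$ on edge $e$, zero otherwise.

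Next I would verify the Plücker formula. For $I \in \binom{[n]}{k}$, expanding the maximal minor of $\tilde L$ on the columns $B \cup I$ gives a signed sum over bijections from $W$ to $B \cup I$, and only the bijections corresponding to almost perfect matchings $M$ with $\partial M = I$ contribute. The Kasteleyn condition guarantees that all these terms carry the same sign: two such matchings differ by symmetric differences along alternating cycles, each of which bounds a union of faces with even total perimeter, and the defining property of $\epsilon$ forces the overall sign change to be trivial. Since the weights $w_e$ are positive, the minor equals $\pm\sum_{M:\partial M = I} w(M)$. After row-reducing to eliminate the $B$-columns (for generic $w$ the $|W|\times|B|$ block on the $B$-columns has rank $|B|$), Cramer's rule turns this into a $k \times n$ matrix $L=L(w)$ with $p_I(L) = c\cdot\sum_{M:\partial M=I} w(M)$ for a common positive factor $c$; a global rescaling gives the stated formula, and the totally nonnegative claim is then immediate since every $p_I(L)$ shares one sign.

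Finally, to identify $\{L(w)\}$ with the positroid cell $S_{\M(G)}$, I would argue that the formula shows $p_I(L(w)) > 0$ exactly when $I \in \M(G)$ and $p_I(L(w)) = 0$ otherwise, so the image lies in $S_{\M(G)}$. Gauge transformations (simultaneously rescaling the weights on all edges incident to a common internal vertex by a positive constant) leave $L(w)$ invariant, so the map factors through a quotient of $\R_{>0}^{\mathrm{Edges}(G)}$ by this gauge group. A dimension count — the dimension of the quotient equals the number of faces of $G$ minus one, which in turn equals $\dim S_{\M(G)}$ — together with continuity and connectedness, shows the image is an open subset of $S_{\M(G)}$; explicit inversion via face-weight coordinates (read off from ratios of Plücker coordinates) then promotes this to all of $S_{\M(G)}$. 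The hard part is this last identification, because the weight map is many-to-one and a priori its image could be a proper subset: Postnikov's original route handles it with face-weight coordinates and inductive decomposition of $G$ along an edge, while Talaska's flow formula inverts the boundary measurement directly; either way one has to verify invariance of the construction under the moves (M1)--(M3), which is routine but indispensable.
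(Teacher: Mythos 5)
The paper states this result as a citation to the literature rather than giving its own proof, and it explicitly names the Kasteleyn recasting (Speyer) as one of the three equivalent routes alongside Postnikov's boundary measurements and Talaska's flows. Your proposal reconstructs precisely the Kasteleyn route, so it is consistent with what the paper points to.

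Two small cautions on the details. First, your hedge ``for generic $w$ the $|W|\times|B|$ block has rank $|B|$'' is unnecessarily weak and slightly misleading: since $p_{B\cup I}(\tilde L)=\sum_{\partial M=I}w(M)>0$ for $I=\partial M_0$, the columns indexed by $B\cup I$ are linearly independent for \emph{every} positive weight function, so in particular the $B$-columns have full rank $|B|$ always; no genericity is needed, which matters because the theorem must hold for all $w$. Second, your dimension count ``(number of faces) $-1 = \dim S_{\M(G)}$'' is only valid when $G$ is reduced; for a non-reduced plabic graph $\dim S_{\M(G)}$ is strictly smaller than $F-1$, and the map from gauge-equivalence classes of weights to $\Gr_{k,n}$ fails to be injective, so the open-image-plus-dimension argument does not by itself establish surjectivity. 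You correctly identify move-invariance and explicit inversion (face weights or Talaska's flow formula) as the essential tools; the clean way to assemble them is to first reduce $G$ by moves (the Plücker sums are move-invariant, so the image set is unchanged), then apply the dimension/inversion argument to the reduced representative. With that reordering, the argument closes up.
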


If $G$ is a tree, we call $S_G$  a
\emph{tree positroid cell}.


\begin{remark}
If $G$ 
is a plabic graph as in \cref{thm:Kasteleyn} which is reduced, with decorated permutation $\pi_G$
and almost perfect matchings $\M(G)$, we have
	that $S_G = S_{\M(G)} = S_{\pi_G}$ \cite{postnikov}. 
	So we can  index
	positroid cells
 by plabic graphs, bases, or 
	decorated permutations.
\end{remark}

\subsection{$\phi$-induced subdivisions and positroid tilings}

Given a surjective map $\phi: X \to Y$ from a cell complex $X$
onto a topological
space $Y$, 
it is natural to try to decompose $Y$ using images
of 
 cells under $\phi$.

\begin{definition}\label{def:tri0}
	Let $X = \bigsqcup_{\pi} S_{\pi}$ be a cell complex
and let $\phi: X \to Y$ be
 a continuous surjective map onto $Y$, 
a $d$-dimensional cell complex or subset thereof.
We define a \emph{$\phi$-induced dissection of $Y$}
to be a collection 
$\{\overline{\phi(S_{\pi})} \ \vert \ \pi\in \mathcal{C}\}$
of images of cells of $X$, indexed by the set 
$\mathcal{C}$, such that:
		their union $\cup_{\pi\in \CCC} \overline{\phi(S_{\pi})}$
		equals $Y$;
		the interiors are pairwise disjoint, i.e. 
		$\phi(S_{\pi}) \cap \phi(S_{\pi'}) = \emptyset$
		for $\pi \neq \pi' \in \CCC$; and 
		$\dim(\phi(S_{\pi})) = d$ for all $\pi \in \CCC$; 

We call a dissection
$\{\overline{\phi(S_{\pi})} \ \vert \ \pi\in \mathcal{C}\}$
	a \emph{$\phi$-induced tiling} if additionally 
	$\phi$ is injective on each $S_{\pi}$ for $\pi \in \CCC$.
And we call a dissection
$\{\overline{\phi(S_{\pi})} \ \vert \ \pi\in \mathcal{C}\}$
	a \emph{$\phi$-induced subdivision} if whenever
	 $\overline{\phi(S_{\pi})} \cap \overline{\phi(S_{\pi'})} \neq \emptyset$, this intersection equals $\overline{\phi(S_{\pi''})}$, where 
	 $S_{\pi''}$ lies in $\overline{S_{\pi}} \cap \overline{S_{\pi'}}$.
\end{definition}

When $\phi:X \to Y$ is an affine projection of convex polytopes, 
the above notion of $\phi$-induced subdivision recovers 
Billera-Sturmfels' notion of $\phi$-induced polyhedral subdivision \cite{BS}.
The subdivisions which are also $\phi$-induced tilings
are their \emph{tight} $\phi$-induced subdivisions.
The relation to polyhedral subdivisions suggests a number of 
 questions.  What can one say about the
\emph{Baues poset} $\omega(\phi:X\to Y)$ of $\phi$-induced dissections
or subdivisions, partially ordered by refinement?  What can one
say about the \emph{flip graph}, the restriction of the 
Hasse diagram of $\omega(\phi:X\to Y)$ to elements of rank $0$ and $1$? When is 
it connected?  Is there an analogue of 
 \emph{fiber polytopes} \cite{BS} (perhaps along the lines of 
 \cite{MathisMeroni}), which 
control the \emph{coherent} $\pi$-induced subdivisions?  

\begin{definition}\label{def:tri}
In \cref{def:tri0}, let $X$ be  the positive Grassmannian
$\Grk$ with its positroid cell decomposition.
For $S_\pi$ a $d$-dimensional positroid cell,
we say that $\overline{\phi(S_{\pi})}$
is a \emph{positroid tile} (for $\phi$)
if $\phi$ is injective on $S_{\pi}$.
We will refer to $\phi$-induced dissections, tilings, and subdivisions,
	as \emph{positroid dissections, tilings}, and \emph{subdivisions}.
Our notion of positroid subdivision is closely related to the
	\emph{good dissections} studied in \cite{LPW}.
\end{definition}

In this article we will take $X$ to be the positive Grassmannian, 
and consider the case where
 $\phi$ is the \emph{moment map} 
(\cref{sec:moment}) or
the   \emph{amplituhedron map} 
(\cref{sec:amp}).

\section{The moment map, positroid tilings, and the positive tropical Grassmannian}
\label{sec:moment}

The foundational 1987 paper
 of Gelfand--Goresky--MacPherson--Serganova \cite{GGMS}
initiated the study of the Grassmannian and its matroid 
stratification via the moment map.
Here we will consider the restriction of the moment map
to the positive Grassmannian.

Given a subset $I \subset [n]$ and a point $x\in \R^n$,
we use the notation $x_I:=\sum_{i \in I} x_i$.
We also let $e_I := \sum_{i \in I} e_i \in \R^n$, where
$\{e_1, \dotsc, e_n\}$ is the standard basis of $\RR^n$.


The torus $T = (\CC^{*})^n$ acts on $\Gr_{k,n}$ by scaling the columns of
a matrix representative $C$.  
This torus action gives rise to a 
 \emph{moment map}  $\mu:\Gr_{k,n} \to \R^n$.
\begin{definition}
        Let $C 
        \in \Gr_{k,n}$.
        The \emph{moment map}
        $\mu: \Gr_{k,n} \to \R^n$ is defined by
        $$\mu(C) = \frac{ \sum_{I \in \binom{[n]}{k}} |p_I(C)|^2 e_I}
        {\sum_{I \in \binom{[n]}{k}} |p_I(C)|^2}.$$
\end{definition}


Let $TC$ denote the orbit of $C$ under the action of $T$, and
$\overline{TC}$ its closure.
It follows from 
 \cite{GS} 
 that the image
$\mu(\overline{TC})$ is a convex polytope, whose vertices
are the images of the torus-fixed points. This polytope
is the \emph{matroid polytope} $\Gamma_{\M(C)}$ \cite{GGMS}, as defined below.

\begin{defn}
Given a matroid $\M=([n],\B)$, the (basis) \emph{matroid polytope}
$\Gamma_{\M}$ is the convex hull of the indicator vectors of the bases
of $\M$: 
$$\Gamma_{\M} := \convex \{e_B \ \vert \ B\in \B\}\subset \R^n.$$
\end{defn}

A special case of a matroid polytope is 
the \emph{hypersimplex}
$\Delta_{k,n}$, 
 the convex hull of all points $e_I$ for $I \in \binom{[n]}{k}$.
We have that $\mu(\Gr_{k,n}) = \Delta_{k,n}$.


\subsection{Classification of positroid tiles for the moment map}

Now let us consider the positive analogues of some of the above objects.
If $\M$ is  a \emph{positroid}, the matroid polytope $\Gamma_{\M}$
is called
 a \emph{positroid polytope}.  
If one restricts the moment map to 
$\Grk$, one can show that the moment map image 
$\mu(\overline{S_{\M}}) = \overline{\mu(S_{\M})}$
of $S_{\M}$ is precisely the corresponding positroid polytope
$\Gamma_{\M}$
\cite[Proposition 7.10]{tsukerman_williams}.
In particular, the moment map image of $\Gr_{k,n}^{\geq 0}$ 
is again the hypersimplex
$\Delta_{k,n}$. 
If $\M$ is the positroid associated to a cell 
$S_\pi$ or $S_G$, we also use the notation $\Gamma_{\pi}$ and $\Gamma_G$
to refer to $\Gamma_{\M}$.
See	\cref{fig:pospolytopes}.

\begin{figure}[h]\centering
	\begin{tikzpicture} 
		\draw (0,0) circle (1cm);
	\node[draw=none, minimum size=2cm, regular polygon, regular polygon sides=4] (s) {};
	\foreach \x in {1,2,...,4}
	\fill (s.corner \x) circle[radius=1.2pt];
	 \node[shift=(s.corner 1),anchor=west] {$2$};
	 \node[shift=(s.corner 2),anchor=east] {$1$};
	 \node[shift=(s.corner 3),anchor=east] {$4$};
	 \node[shift=(s.corner 4),anchor=west] {$3$};
        \draw[black,thick](0,-.2)--(s.corner 3);
        \draw[black,thick](0,-.2)--(s.corner 4);
        \draw[black,thick](0,.2)--(s.corner 1);
        \draw[black,thick](0,.2)--(s.corner 2);
        \draw[black,thick](0,.2)--(0,-.2);
	\filldraw[color=black,fill=black] (0,.2) circle (2pt);
	\filldraw[color=black,fill=white] (0,-.2) circle (2pt);
	\filldraw[color=black,fill=white] (-.3,.4) circle (2pt);
	\filldraw[color=black,fill=white] (.3,.4) circle (2pt);
	\draw (0,-1.2) node {\tiny $G_1$};
\end{tikzpicture} \quad 
	\begin{tikzpicture}
     \tkzDefPoint(0,0){e14}
     \tkzDefPoint(1,0){e13}
     \tkzDefPoint(.5,.4){e24}
     \tkzDefPoint(1.5,.4){e23}
     \tkzDefPoint(.7,1.4){e12}
     \tkzDefPoint(.7,-1){e34}
		\tkzDrawPolygon[dashed](e24, e14,e13,e23,e24,e12,e14,e13,e12,e23,e34,e14,e13,e34,e23)
		\tkzDrawPolygon(e24,e14,e12,e24,e23,e12,e13,e14,e13,e23);
		\node[shift=(e24), anchor=north] {\tiny $e_{24}$};
		\node[shift=(e14), anchor= east] {\tiny $e_{14}$};
		\node[shift=(e23), anchor=west] {\tiny $e_{23}$};
		\node[shift=(e13), anchor=north east] {\tiny $e_{13}$};
		\node[shift=(e12), anchor=east] {\tiny $e_{12}$};
		\node[shift=(e34), anchor=east] {\tiny $e_{34}$};
	\filldraw[color=black,fill=black] (e24) circle (1pt);
	\filldraw[color=black,fill=black] (e14) circle (1pt);
	\filldraw[color=black,fill=black] (e23) circle (1pt);
	\filldraw[color=black,fill=black] (e13) circle (1pt);
	\filldraw[color=black,fill=black] (e12) circle (1pt);
	\filldraw[color=black,fill=black] (e34) circle (1pt);
\end{tikzpicture} \quad \quad
	\begin{tikzpicture}
     \tkzDefPoint(0,0){e14}
     \tkzDefPoint(1,0){e13}
     \tkzDefPoint(.5,.4){e24}
     \tkzDefPoint(1.5,.4){e23}
     \tkzDefPoint(.7,1.4){e12}
     \tkzDefPoint(.7,-1){e34}
\tkzDrawPolygon[dashed](e14,e12,e13,e23,e12,e24,e14)
\tkzDrawPolygon(e34,e23,e24,e14,e34,e13,e23,e24,e14,e13);
		\node[shift=(e24), anchor=north] {\tiny $e_{24}$};
		\node[shift=(e14), anchor= east] {\tiny $e_{14}$};
		\node[shift=(e23), anchor=west] {\tiny $e_{23}$};
		\node[shift=(e13), anchor=north east] {\tiny $e_{13}$};
		\node[shift=(e12), anchor=east] {\tiny $e_{12}$};
		\node[shift=(e34), anchor=east] {\tiny $e_{34}$};
	\filldraw[color=black,fill=black] (e24) circle (1pt);
	\filldraw[color=black,fill=black] (e14) circle (1pt);
	\filldraw[color=black,fill=black] (e23) circle (1pt);
	\filldraw[color=black,fill=black] (e13) circle (1pt);
	\filldraw[color=black,fill=black] (e12) circle (1pt);
	\filldraw[color=black,fill=black] (e34) circle (1pt);
\end{tikzpicture}
	\begin{tikzpicture}
		\draw (0,0) circle (1cm);
	\node[draw=none, minimum size=2cm, regular polygon, regular polygon sides=4] (s) {};
	\foreach \x in {1,2,...,4}
	\fill (s.corner \x) circle[radius=1.2pt];
	 \node[shift=(s.corner 1),anchor=west] {$2$};
	 \node[shift=(s.corner 2),anchor=east] {$1$};
	 \node[shift=(s.corner 3),anchor=east] {$4$};
	 \node[shift=(s.corner 4),anchor=west] {$3$};
        \draw[black,thick](0,-.2)--(s.corner 3);
        \draw[black,thick](0,-.2)--(s.corner 4);
        \draw[black,thick](0,.2)--(s.corner 1);
        \draw[black,thick](0,.2)--(s.corner 2);
        \draw[black,thick](0,.2)--(0,-.2);
	\filldraw[color=black,fill=white] (0,.2) circle (2pt);
	\filldraw[color=black,fill=black] (0,-.2) circle (2pt);
	\filldraw[color=black,fill=white] (-.3,-.4) circle (2pt);
	\filldraw[color=black,fill=white] (.3,-.4) circle (2pt);
	\draw (0,-1.2) node {\tiny $G_2$};
\end{tikzpicture}
	\caption{Positroid polytopes $\Gamma_{G_1}$ and $\Gamma_{G_2}$
	associated to  graphs $G_1$ and $G_2$, cf.
	 \cref{Plabic1}.} 
        \label{fig:pospolytopes}
\end{figure}



Applying \cref{def:tri} to the moment map 
$\mu: \Grk\to \Delta_{k,n}$ onto the hypersimplex, a polytope of dimension $n-1$,
we see that a positroid tile is the (closure of the) image of an $(n-1)$-dimensional positroid
cell
on which the moment map is injective.

\begin{proposition} [{{\cite[Proposition 3.16]{LPW}}, based on 
	\cite{tsukerman_williams}, \cite{ARW}}]
\label{prop:tree}
The positroid tiles for the moment map are exactly 
the positroid polytopes $\Gamma_G$ associated to 
the tree positroid cells $S_G$. 
	Two positroid tiles $\Gamma_G$ and $\Gamma_{G'}$ are the same  if and only if
	$G$ and $G'$ are related by move (M2).
\end{proposition}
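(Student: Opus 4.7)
The plan is to first delineate which positroid cells can possibly yield tiles via a dimension count, then to verify injectivity of $\mu$ on the remaining candidates, and finally to classify the ambiguity among parametrizations by trees.

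\textbf{Step 1 (dimension count).} Since $\dim\Delta_{k,n}=n-1$, being a positroid tile requires $\dim S_\pi = n-1$ and $\mu|_{S_\pi}$ injective. By Postnikov's parametrization (see \cref{thm:Kasteleyn}), a reduced bipartite plabic graph $G$ with $E$ edges, $V_{\mathrm{int}}$ internal vertices, and $F$ faces (including boundary faces) satisfies $\dim S_G = E - V_{\mathrm{int}} = F-1$. A plabic graph embedded in the disk with $n$ boundary leaves has $F\ge n$, with equality if and only if $G$ is acyclic, i.e.\ a tree. Hence only the tree positroid cells can give positroid tiles for $\mu$.

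\textbf{Step 2 (injectivity on tree cells).} The positive torus $T_{>0}=(\RR_{>0})^n$ acts on $\Grk$ by column rescaling, preserving each positroid cell. For $G$ a tree, I would argue that $T_{>0}/\RR_{>0}$ acts simply transitively on $S_G$. The key structural observation is that on a tree each basis $I\in\M(G)$ admits a \emph{unique} almost perfect matching $M_I$ with $\partial M_I=I$, because the symmetric difference of two such matchings would be a disjoint union of cycles. Consequently under the parametrization $w\mapsto L(w)$ of \cref{thm:Kasteleyn}, every Plücker coordinate is a single monomial $p_I(L(w))=w(M_I)$. This monomial structure lets one recover edge weights from Plücker ratios up to gauge and a global scaling, so the orbit equations $p'_{ij}=t_it_j p_{ij}$ are always solvable; together with $\dim S_G = n-1 = \dim T_{>0}/\RR_{>0}$ this forces a free transitive action. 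Injectivity of $\mu|_{S_G}$ onto $\Gamma_G$ is then the classical statement that the moment map is injective on a torus orbit, and the image is the torus polytope, namely $\Gamma_G$ \cite{GS}.

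\textbf{Step 3 (uniqueness up to (M2)).} Suppose $\Gamma_G=\Gamma_{G'}$ for two tree plabic graphs. Comparing vertex sets gives $\M(G)=\M(G')$, hence $\pi_G=\pi_{G'}$, and by the Fundamental Theorem of reduced plabic graphs (Theorem 2.15) the two graphs are move-equivalent. A tree has no square face, so (M1) is inapplicable; after using (M3) to normalize bipartiteness, the remaining equivalence is realized by a sequence of (M2) moves (splitting and merging monochromatic vertices). The converse is immediate: (M2) preserves the set of almost perfect matchings of $G$, hence preserves $\Gamma_G$.

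\textbf{Main obstacle.} Step 2 is the substantive part, and the tree hypothesis is essential twice over: it produces the monomial structure that makes the torus action on $S_G$ free, and simultaneously it matches $\dim S_G$ with $\dim T_{>0}/\RR_{>0}$, forcing transitivity. For non-tree reduced plabic graphs the monomial structure already fails, since multiple matchings share a boundary and Plücker coordinates become genuinely polynomial in the weights; the torus orbit is then strictly smaller than the cell, which is precisely what prevents injectivity. Step 1 disposes of the converse direction --- excluding non-trees as tile sources --- at no additional cost.
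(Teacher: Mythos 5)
The overall structure of your argument — a dimension count to single out tree cells, a torus-orbit argument for injectivity, and the Fundamental Theorem to handle uniqueness — is the same route the cited references take, and Step 1 is fine (including the Euler count $\dim S_G = E - V_{\mathrm{int}} = F-1$). But there are two real soft spots.

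In Step~2 the monomial observation (unique almost perfect matchings in a tree) is the right starting point, but the leap from ``monomial Pl\"ucker coordinates'' to ``the orbit equations $p'_I = (\prod_{i\in I}t_i)\,p_I$ are always solvable'' is not actually justified. Taking logarithms, solvability means $\log p' - \log p$ lies in the row span of the basis--element incidence matrix of $\M(G)$, and the codimension of that row span is exactly the number $c(\M)$ of connected components of the matroid; thus the orbit of $T_{>0}/\RR_{>0}$ has dimension $n - c(\M)$. So what you really need is the fact that a connected bipartite tree plabic graph with no lollipops yields a \emph{connected} matroid (this is in \cite{ARW}). Once you know $n - c(\M) = n-1 = \dim S_G$ and $S_G$ is connected, each orbit is open of full dimension, hence clopen, hence all of $S_G$; that is cleaner than asserting solvability outright. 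Your version implicitly assumes that the compatibility constraints on $p'_I/p_I$ coming from multiple equations are consequences of the three-term Pl\"ucker relations, which is true but is exactly the content of matroid connectivity in disguise.

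Step~3 is where a genuine gap sits. You invoke the Fundamental Theorem, rule out (M1) because intermediate graphs in the sequence remain trees (correct: all moves preserve $F$), and then assert ``after using (M3) to normalize bipartiteness, the remaining equivalence is realized by a sequence of (M2) moves.'' This isn't argued, and as written it can't be right in the bipartite convention you've been using: on a bipartite tree no two adjacent vertices share a color, so (M2)-merge never applies, and an (M2)-split immediately leaves the bipartite class — so ``related by (M2) alone'' would be vacuous, while two bipartite trees differing by inserting a degree-2 black and a degree-2 white vertex on an edge (two (M3) moves) have identical trip permutations and matroid polytopes but are visibly not (M2)-related. The resolution, which is what \cite{LPW} actually uses, is to work with general (not-necessarily-bipartite) plabic trees, where (M2) is nontrivial, and to observe that the degree-2 deletions of (M3) factor as (M3) followed by an (M2)-merge into the class of general trees, so that move-equivalence of plabic trees really is generated by (M2). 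You should either adopt that convention explicitly, or give the combinatorial lemma that the (M3)-pairs can be absorbed into (M2); as it stands Step~3 asserts the hard half of the proposition rather than proving it.

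Finally, a minor point: Step~1 shows non-tree connected cells have $\dim S_G > n-1$, but you should also note disconnected plabic graphs have $F < n$ and hence cannot produce full-dimensional cells, and that lollipops (loops/coloops) push $\Gamma_G$ into a facet of $\Delta_{k,n}$, so they are excluded as well.
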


\subsection{The positive tropical Grassmannian and positroid subdivisions}

How can one produce positroid tilings and more generally positroid
subdivisions of the hypersimplex $\Delta_{k,n}$?  One way is to use the 
\emph{positive tropical Grassmannian}.

The \emph{tropical Grassmannian} 
$\Trop \Gr_{k,n}$ \cite{HKT, KT, tropgrass}
is the space
of \emph{realizable tropical linear spaces}, obtained by applying the valuation map to 
elements of the Grassmannian $\Gr_{k,n}(K)$ over the field 
$K=\CC\{\!\{t\}\! \}$
of 
\emph{Puiseux-series}. 
Meanwhile the \emph{Dressian} $\Dr_{k,n}$ 
is the space of \emph{tropical Pl\"ucker vectors}
$P = \{P_I\}_{I \in {[n] \choose k}}$, also known 
as 
\emph{valuated matroids}.  Thinking of each 
$P\in \Dr_{k,n}$ as a \emph{height
function} on the vertices of the hypersimplex $\Delta_{k,n}$, one can show that the Dressian
parameterizes
regular matroid subdivisions $\mathcal{D}_P$
of $\Delta_{k,n}$  \cite{Kapranov, Speyer}, which in turn are dual to 
\emph{abstract tropical linear spaces} \cite{Speyer}.

There are positive notions of both of the above spaces.
The \emph{positive tropical Grassmannian} \cite{troppos} is the space
of \emph{realizable positive tropical linear spaces}, obtained by applying the valuation map to
Puiseux-series valued elements of the {positive Grassmannian}.
The \emph{positive Dressian} is the space of \emph{positive tropical Pl\"ucker vectors}.


\begin{definition}\label{def:tropPlucker}
We say
	$P=\{P_I\}_{I\in \binom{[n]}{k}}\in \R^{\binom{[n]}{k}}$
  is a \emph{positive tropical Pl\"ucker vector} 
 if for each three-term Pl\"ucker relation, it lies in
 the positive part of the associated tropical hypersurface:
	for  $1\leq a<b<c<d\leq n$
and $S \in \binom{[n]}{k-2}$  disjoint from $\{a,b,c,d\}$, 
either
		$$P_{Sac}+P_{Sbd} = P_{Sab}+P_{Scd} \leq P_{Sad}+P_{Sbc}
		\text{ or }
		P_{Sac}+P_{Sbd} = 
P_{Sad}+P_{Sbc} \leq 
P_{Sab}+P_{Scd}.$$
The \emph{positive Dressian} $\Dr^+_{k,n}$ is 
the set of 
positive
tropical Pl\"ucker vectors.  
\end{definition}

In general, the Dressian $\Dr_{k,n}$ contains the tropical Grassmannian $\Gr_{k,n}$ but is much larger 
\cite{Herrmann2008HowTD}.
However, the situation for their positive parts is different, see 
\cite{SW} and
\cite{PosConfig}.
\begin{theorem} [{\cite[Theorem 3.9]{SW} and \cite{PosConfig}}]
 The positive tropical Grassmannian $\Trop^+\Gr_{k,n}$ equals the positive Dressian
$\Dr^+_{k,n}$.  That is, all abstract positive tropical
linear spaces are realizable.
\end{theorem}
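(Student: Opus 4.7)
The easy containment $\Trop^+\Gr_{k,n}\subseteq\Dr^+_{k,n}$ is immediate: for any positive Puiseux-series point of $\Gr_{k,n}$, the classical three-term Pl\"ucker identity $p_{Sac}p_{Sbd}=p_{Sab}p_{Scd}+p_{Sad}p_{Sbc}$ is subtraction-free, so taking valuations yields the positive tropical Pl\"ucker relation of \cref{def:tropPlucker}: the valuation of a sum of positives equals the minimum of the summand valuations, forcing $\mathrm{val}(p_{Sac}p_{Sbd})=\min(\mathrm{val}(p_{Sab}p_{Scd}),\mathrm{val}(p_{Sad}p_{Sbc}))$. My plan therefore concentrates on the reverse inclusion: given $P=(P_I)\in\Dr^+_{k,n}$, I would construct an explicit $C(t)\in\Gr_{k,n}^{>0}(\CC\{\!\{t\}\!\})$ with $\mathrm{val}(p_I(C(t)))=P_I$ for all $I$.

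The construction uses the weighted plabic graph parametrization of the top cell. Fix a reduced bipartite plabic graph $G$ whose trip permutation $i\mapsto i+k$ gives $S_G=\Gr_{k,n}^{>0}$; by \cref{thm:Kasteleyn}, each $p_I(L(w))=\sum_{\partial M=I}\prod_{e\in M}w_e$ is a subtraction-free polynomial in the edge weights. Passing to face variables $x_f$ indexed by the internal faces of $G$, the face-labeled Pl\"uckers $p_{I_f}$ become Laurent monomials in the $x_f$ while every other Pl\"ucker remains a positive Laurent polynomial. Given $P$, I would solve these monomial equations for tropical exponents $a_f\in\RR$ from the prescribed $P_{I_f}$, set $x_f(t)=t^{a_f}$, and let $C(t):=L(w(t))$. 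Since each $p_I$ is subtraction-free in the $x_f$, its valuation on $C(t)$ is automatically the tropicalization of the same polynomial at $(a_f)$; it remains to show this tropical value equals $P_I$.

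I would prove the required equality by induction on the distance of $I$ from the initial face cluster $\{I_f\}$ in the Pl\"ucker cluster structure on $\Gr_{k,n}$. The base case is immediate (face Pl\"uckers are monomials), and each inductive step pairs a three-term Pl\"ucker exchange on the algebraic side with the corresponding positive tropical three-term relation from \cref{def:tropPlucker} on the combinatorial side; subtraction-freeness of the classical exchange guarantees that the two tropicalizations agree term-for-term. The principal obstacle is verifying this inductive hypothesis, namely that every $I\in\binom{[n]}{k}$ can be reached from the face cluster of $G$ by a chain of three-term mutations compatible with $G$. This rests on the cluster-algebraic fact that the face Pl\"uckers of a reduced plabic graph form an initial cluster of the Pl\"ucker cluster algebra on $\Gr_{k,n}$, with exchange relations of exactly the three-term form appearing in \cref{def:tropPlucker}; with this input the induction closes, $C(t)$ realizes $P$, and $\Dr^+_{k,n}\subseteq\Trop^+\Gr_{k,n}$ follows.
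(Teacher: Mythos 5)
Your argument reproduces the route taken in Speyer--Williams \cite[Theorem 3.9]{SW}: parametrize the top cell by face variables of a reduced plabic graph, set them to powers of $t$ matching $P$ on the face labels, and propagate the valuation equality to all other Pl\"ucker coordinates via three-term exchanges. The one point to sharpen is your appeal to ``the cluster-algebraic fact that the face Pl\"uckers form an initial cluster \ldots\ with exchange relations of exactly the three-term form.'' As stated this is too strong: already $\Gr_{3,6}$ has non-Pl\"ucker cluster variables, so an arbitrary mutation from a Pl\"ucker seed need not produce a three-term Pl\"ucker relation, and an induction on ``distance in the cluster exchange graph'' does not obviously close. What makes the propagation work is Oh--Postnikov--Speyer's theory of weakly separated collections: every $I\in\binom{[n]}{k}$ occurs as a face label of some reduced plabic graph for the top cell (a weakly separated collection containing $I$ extends to a maximal one), any two such graphs are connected by square moves (M1), and a square move is exactly a mutation whose exchange relation is a three-term Pl\"ucker identity in which the five Pl\"uckers other than the new one are all face labels of the graph before the move, hence already verified. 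Organizing the propagation along chains of square moves rather than abstract cluster distance closes the induction and yields precisely the argument of \cite{SW}.
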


There are two natural fan structures on the Dressian -- the Pl\"ucker fan and the 
secondary fan -- which coincide \cite{Olarte}.  
The cones of 
$\Trop^+\Gr_{k,n}$ control the 
\emph{regular subdivisions} of $\Delta_{k,n}$ into positroid polytopes, with maximal cones giving
rise to positroid tilings.

Consider a point
$P=\{P_I\}_{I\in \binom{[n]}{k}}\in \R^{\binom{[n]}{k}}$,
which we also think of as 
a real-valued function $\{e_I\} \mapsto P_I$ on the vertices
of $\Delta_{k,n}$.  We define a polyhedral subdivision $\mathcal{D}_P$
of $\Delta_{k,n}$ as follows: consider the points
$(e_I, P_I)\in \Delta_{k,n} \times \R$ and take their convex hull.
Take the lower faces (those whose outwards normal vector have last component
negative) and project them  down to $\Delta_{k,n}$.  This 
gives us the \emph{regular subdivision} $\mathcal{D}_P$ of $\Delta_{k,n}$.
Note that $\mathcal{D}_P$ is a \emph{polytopal subdivision}
and that its facets (top-dimensional faces) comprise a 
positroid subdivision for the moment map
(cf.  \cref{def:tri}).

\begin{theorem} [{\cite[Theorem 9.12]{LPW}}, see also \cite{PosConfig}] \label{thm:trop}
			The point $P=\{P_I\}_{I \in \binom{[n]}{k}}$
                is a positive tropical Pl\"ucker vector if and only if 
		every face of 
		$\mathcal{D}_P$ 
			is a positroid polytope.
\end{theorem}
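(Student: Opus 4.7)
The plan is to reduce the equivalence to an analysis on each three-dimensional ``octahedral'' subpolytope of $\Delta_{k,n}$, and then combine Speyer's theorem (every face of $\mathcal{D}_P$ is a matroid polytope iff $P$ is a tropical Pl\"ucker vector) with the Ardila--Rinc\'on--Williams characterization of positroid polytopes as exactly those matroid polytopes cut out by cyclic-interval rank inequalities. The key geometric observation is that the positive tropical Pl\"ucker relations are precisely the local conditions that forbid ``crossing-diagonal'' subdivisions at each octahedron.

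To set up the local model, fix $S\in\binom{[n]}{k-2}$ and $a<b<c<d$ in $[n]\setminus S$. The six lattice points $\{e_{Sij}:\{i,j\}\subset\{a,b,c,d\}\}$ span a 3-dimensional octahedron $\mathcal{O}=\mathcal{O}_{S,abcd}\subset\Delta_{k,n}$ with three pairs of antipodal vertices $(e_{Sab},e_{Scd})$, $(e_{Sac},e_{Sbd})$, $(e_{Sad},e_{Sbc})$. The regular subdivision induced on $\mathcal{O}$ by the restriction of $P$ is controlled by the three diagonal sums $P_{Sab}+P_{Scd}$, $P_{Sac}+P_{Sbd}$, $P_{Sad}+P_{Sbc}$: if one is strictly least, then $\mathcal{O}$ splits into two tetrahedra along that diagonal (which would violate the tropical Pl\"ucker condition); if two tie at the minimum, then $\mathcal{O}$ is cut by the square spanned by the endpoints of those two diagonals into two square pyramids; and if all three are equal, then $\mathcal{O}$ is not subdivided at all. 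Among the three diagonals, the one joining $e_{Sac}$ to $e_{Sbd}$ is the unique ``crossing'' diagonal with respect to the cyclic order $a<b<c<d$.

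For the forward direction, assume $P$ is a positive tropical Pl\"ucker vector. In particular $P$ is a tropical Pl\"ucker vector, so by Speyer's theorem every face of $\mathcal{D}_P$ is a matroid polytope. The positive hypothesis says that the minimum of the three sums above is always attained by $P_{Sac}+P_{Sbd}$ together with at least one of the other two. Equivalently, the ``bad'' case $P_{Sab}+P_{Scd}=P_{Sad}+P_{Sbc}<P_{Sac}+P_{Sbd}$ never occurs, and so the cutting square $\{e_{Sab},e_{Sad},e_{Sbc},e_{Scd}\}$ — whose two pyramidal halves have non-bases $\{b,d\}$ and $\{a,c\}$ respectively, neither of which is a cyclic interval in the order $a<b<c<d$ — never appears in $\mathcal{D}_P$. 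Appealing to the Ardila--Rinc\'on--Williams characterization, checking this at every octahedron is enough to conclude that each matroid polytope appearing as a face of $\mathcal{D}_P$ is cut out by cyclic-interval inequalities, hence is a positroid polytope. Conversely, if every face of $\mathcal{D}_P$ is a positroid polytope, then a fortiori every face is a matroid polytope, so $P$ is already a tropical Pl\"ucker vector; the induced subdivision of $\mathcal{O}_{S,abcd}$ must consist of positroid polytopes, and the bad-square split would produce non-positroid pyramids, so that split cannot occur, forcing exactly one of the two positive Pl\"ucker relations at $(S,a,b,c,d)$.

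The main obstacle is the local-to-global bootstrap in the forward direction: the octahedral analysis gives only a purely local condition, and one must invoke the ARW cyclic-interval characterization to conclude that the corresponding global matroid polytopes — which can be much higher-dimensional than the octahedra — really are positroid polytopes, rather than merely matroid polytopes whose octahedral restrictions happen to be well-behaved. Once that structure theorem is in hand, the rest of the argument reduces to the octahedron-by-octahedron case analysis described above.
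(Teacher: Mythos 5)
The paper cites this theorem from external references \cite{LPW,PosConfig} without giving its own proof, so there is no in-paper proof to compare against; I evaluate your proposal on its own terms.

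Your \emph{backward} direction is sound. Since faces of positroid polytopes are again positroid polytopes, the subdivision that $\mathcal{D}_P$ induces on each octahedral face $\mathcal{O}_{S,abcd}$ of $\Delta_{k,n}$ must consist of positroid polytopes; the bad-square split produces two pyramids whose restricted rank-$2$ matroids on $\{a,b,c,d\}$ have single non-bases $\{b,d\}$ and $\{a,c\}$ respectively, and you correctly identify these as non-positroids (the unique non-basis corresponds to a pair of parallel columns, and total nonnegativity forces such a pair to be a cyclic interval, which $\{a,c\}$ and $\{b,d\}$ are not). So the bad split cannot occur anywhere, and combining with Speyer's theorem pins down the positive rather than merely tropical Pl\"ucker condition.

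Your \emph{forward} direction, however, has a genuine gap, which you yourself flag but do not actually close. The Ardila--Rinc\'on--Williams characterization you invoke is a statement about the halfspace description of a positroid polytope (that it is cut out by cyclic-interval rank inequalities $\sum_{i\in[a,b]}x_i \le r_{\mathcal{M}}([a,b])$); it does not reduce ``$\mathcal{M}$ is a positroid'' to a condition on the 3-dimensional octahedral faces $\Gamma_\mathcal{M}\cap\mathcal{O}_{S,abcd}$, and knowing that no bad square appears on any octahedron does not, by itself, deliver those inequalities for the (typically much higher-dimensional) matroid polytopes occurring as cells of $\mathcal{D}_P$. To close the gap you would need one of the following: (a) a theorem asserting that a matroid $\mathcal{M}$ on $[n]$ is a positroid if and only if every cyclic $4$-element minor $(\mathcal{M}/S)|_{\{a,b,c,d\}}$ is a positroid — a ``local'' characterization in the spirit of the positively-oriented-matroid (chirotope) axioms combined with the realizability theorem of \cite{ARW2,SW}, but this must be stated and proved or cited, not just gestured at via the cyclic-interval description; or (b) the realizability route: by $\Trop^+\Gr_{k,n}=\Dr^+_{k,n}$ (Theorem 2.16 in the paper), a positive tropical Pl\"ucker vector $P$ is the tropicalization of a Puiseux-series point of $\Gr_{k,n}^{>0}$, whose initial degenerations land in $\Gr_{k,n}^{\ge 0}(\mathbb{R})$ and realize the matroids of the maximal cells of $\mathcal{D}_P$, which are therefore positroid polytopes, with all lower faces inheriting the property. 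Route (b) bypasses the octahedral analysis entirely in the forward direction. Minor aside: a uniquely minimal diagonal sum splits the octahedron into four tetrahedra rather than two, though this case is ruled out by the tropical Pl\"ucker condition and does not affect your argument.
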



\begin{example}\label{ex:trop1}
Consider a positive tropical Pl\"ucker vector $P=\{P_I\}_{I\in {[4] \choose 2}} \in \R^{[4] \choose 2}$.
	If we have $P_{13}+P_{24} = P_{14}+P_{23} < P_{12}+P_{34},$
then $P$ induces the 
	subdivision 
		$\mathcal{D}_P$ 
	of $\Delta_{2,4}$ into the two square pyramids shown 
	in 
\cref{fig:pospolytopes}.
	If $P_{13}+P_{24} = P_{12}+P_{34} < P_{14}+P_{23},$
	we get the subdivision of $\Delta_{2,4}$ into two square pyramids
	separated by the square with vertices
	$e_{12}$, $e_{13}$, $e_{24}$, $e_{34}$, see 
        \cref{fig:T1}.
These are both positroid tilings of 
$\Delta_{2,4}.$  
\end{example}

Positroid tilings are particularly nice.
The following result refines \cite[Theorem 6.6]{SW}.
\begin{theorem}\cite[Theorem 6.6]{SW} \label{thm:finest}
	Let $P\in \Trop^+\Gr_{k,n}$ and consider the positroid
	subdivision $\mathcal{D}_P$.  The following statements
	are equivalent: 
	\begin{itemize}
		\item The facets of $\mathcal{D}_P$ comprise a positroid tiling.
		\item The facets of $\mathcal{D}_P$ comprise
			a finest positroid subdivision.
		\item Every face of $\mathcal{D}_P$ is the 
			matroid polytope of a series-parallel matroid.
   \item Every octahedron in $\mathcal{D}_P$ is subdivided.
	\end{itemize}
\end{theorem}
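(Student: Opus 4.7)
The plan is to prove the cycle (Tiling) $\Rightarrow$ (all faces series-parallel) $\Rightarrow$ (every octahedron subdivided) $\Rightarrow$ (finest) $\Rightarrow$ (Tiling). Two structural inputs I will rely on are: \cref{prop:tree}, which identifies positroid tiles for the moment map with matroid polytopes $\Gamma_G$ of tree plabic graphs $G$; and the lemma that a positroid $\M$ is series-parallel (equivalently, has no $U_{2,4}$-minor) if and only if it arises from a tree plabic graph. I would prove the latter by induction on the number of edges of $G$: removing a pendant boundary edge corresponds to a series or parallel extension of $\M$, and conversely every series/parallel extension of a positroid can be realized as the addition of an edge to a plabic tree, with the base case a single edge being trivial.

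For (Tiling) $\Rightarrow$ (3), the facets of $\mathcal{D}_P$ are tree positroid polytopes by \cref{prop:tree}, hence correspond to series-parallel positroids. Every face of $\mathcal{D}_P$ is a face of some facet; since faces of matroid polytopes correspond to matroid minors and series-parallel matroids are minor-closed, every face of $\mathcal{D}_P$ is a series-parallel matroid polytope. For (3) $\Rightarrow$ (4), I would use the classical fact that $\Gamma_{U_{2,4}}$ is the regular octahedron, so forbidding $U_{2,4}$-minors forbids octahedral faces of $\mathcal{D}_P$; hence any octahedron sitting inside a cell of $\mathcal{D}_P$ must be properly subdivided.

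The heart of the argument is the combined step (4) $\Rightarrow$ (2) $\Rightarrow$ (1), both parts of which reduce to the following key lemma, which I expect to be the main obstacle: a series-parallel positroid polytope admits no proper positroid subdivision, and conversely any positroid polytope whose matroid has a $U_{2,4}$-minor \emph{can} be properly positroid-subdivided. I would establish the first half by induction on the series-parallel construction of $\M$: a series (resp.\ parallel) one-element extension of $\Gamma_{\M'}$ yields a pyramidal or join-type polytope any putative positroid splitting of which descends to a positroid splitting of $\Gamma_{\M'}$, contradicting the induction hypothesis. For the second half, a $U_{2,4}$-minor produces an octahedral face of $\Gamma_{\M}$ under the minor/face correspondence, and this octahedron admits the two distinct positroid tilings of \cref{ex:trop1}, either of which can be spliced into $\mathcal{D}_P$ to produce a proper positroid refinement. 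With this lemma in hand, (4) $\Rightarrow$ (2) follows because any proper positroid refinement of $\mathcal{D}_P$ must split some facet, hence (by the lemma) split an octahedral subface of that facet; and (2) $\Rightarrow$ (1) follows because ``finest'' means no facet admits any proper positroid subdivision, forcing each facet polytope to be series-parallel, hence a tree polytope, hence a tile by \cref{prop:tree}, closing the cycle.
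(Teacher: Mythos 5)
Your cycle is the right shape, and you have correctly identified what the paper actually adds to \cite[Theorem~6.6]{SW}: the bridge \cref{prop:tree} (moment-map tiles $\leftrightarrow$ tree plabic graphs) together with the dictionary ``tree plabic graph $\leftrightarrow$ series-parallel positroid.'' The survey gives no proof of the equivalence among the last three bullets — it cites SW for that and adds the tiling clause via exactly this bridge — so your strategy is consistent with the paper's.

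The genuine gap is that you repeatedly pass from ``no $U_{2,4}$-minor'' to ``series-parallel'' for a connected positroid as if it were automatic. For general connected matroids it is false: no $U_{2,4}$-minor only gives \emph{binary}, and one must also exclude $M(K_4)$-minors (Tutte) to conclude series-parallel. You invoke this equivalence in at least two essential places — in closing the loop $(2)\Rightarrow(1)$, and in the converse direction of your key lemma — without explaining why $M(K_4)$ cannot appear. For positroids this does hold, but it requires an argument: either show $M(K_4)$ is not a positroid, or (more in the spirit of the sources) argue on plabic graphs directly — a reduced plabic graph for a connected non-tree positroid can be brought by moves to one with an internal square face, which produces a $U_{2,4}$-minor; that route also subsumes your proposed edge-deletion induction. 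I would also flag two softer issues: the descent argument for ``series-parallel $\Rightarrow$ no proper positroid subdivision'' is not clearly correct as sketched, since a series extension changes both rank and ground set and it is not evident that a subdivision of $\Gamma_M$ restricts to one of $\Gamma_{M'}$; and the ``splice in'' step (refining $\mathcal{D}_P$ by subdividing an octahedral \emph{face} of a facet) silently uses the fact that a regular positroidal subdivision of a face extends to a regular positroidal refinement of the whole $\mathcal{D}_P$, which is precisely where SW's tropical Plücker bookkeeping does real work and should not be elided.
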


Combining \cref{thm:finest} with Speyer's $f$-vector theorem \cite{Ktheory}
gives the following. 

\begin{corollary}\label{cor:binomial}
	Let $\mathcal{D}_P$ be a positroid tiling of
	$\Delta_{k,n}$ as in \cref{thm:finest}.
	Then this polyhedral subdivision contains precisely
	$\frac{(n-c-1)!}{(k-c)!(n-k-c)!(c-1)!}$ interior faces of 
	dimension $n-c$.  In particular, $\mathcal{D}_P$ 
	consists of precisely ${n-2 \choose k-1}$ positroid tiles.
\end{corollary}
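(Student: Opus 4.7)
The plan is to reduce the statement to a direct invocation of Speyer's $f$-vector theorem for matroid subdivisions of the hypersimplex, using the series-parallel characterization from \cref{thm:finest} as the bridge.

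First I would recall the content of Speyer's theorem (from his work on the $K$-theory of the Grassmannian, referenced here as \cite{Ktheory}): for any matroid subdivision $\mathcal{D}$ of $\Delta_{k,n}$, and any $c$ with $1 \le c \le \min(k,n-k)$, the number of interior faces of $\mathcal{D}$ of dimension $n-c$ is bounded above by
\[
\frac{(n-c-1)!}{(k-c)!\,(n-k-c)!\,(c-1)!},
\]
and this upper bound is achieved simultaneously in every codimension if and only if every face of $\mathcal{D}$ is the matroid polytope of a series-parallel matroid. So the whole corollary collapses to verifying that the hypothesis puts us in Speyer's equality case.

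Next I would invoke \cref{thm:finest}: since $\mathcal{D}_P$ is a positroid tiling coming from a positive tropical Pl\"ucker vector, the equivalence in \cref{thm:finest} guarantees that every face of $\mathcal{D}_P$ is the matroid polytope of a series-parallel matroid. (Note that by \cref{thm:trop}, every face of $\mathcal{D}_P$ is already a positroid polytope, so we just need the series-parallel refinement from \cref{thm:finest}.) Combining these two inputs, Speyer's bound is saturated in every codimension, yielding the exact count $\frac{(n-c-1)!}{(k-c)!(n-k-c)!(c-1)!}$ of interior faces of dimension $n-c$.

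Finally, for the ``in particular'' clause, I would specialize to $c=1$. The interior faces of dimension $n-1$ are precisely the facets of $\mathcal{D}_P$, i.e.\ the positroid tiles themselves. Plugging $c=1$ into the formula gives
\[
\frac{(n-2)!}{(k-1)!\,(n-k-1)!\,0!} \;=\; \binom{n-2}{k-1},
\]
which is the stated count. The only substantive step is the appeal to Speyer's theorem; everything else is direct quotation of \cref{thm:finest}, so I do not anticipate a real obstacle beyond carefully matching his equality condition (all matroids in the subdivision are series-parallel) with the condition produced by \cref{thm:finest}.
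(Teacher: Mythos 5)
Your proposal is correct and is essentially the paper's own argument: the paper introduces the corollary with the one-line justification ``Combining \cref{thm:finest} with Speyer's $f$-vector theorem \cite{Ktheory} gives the following,'' and you have simply unpacked that sentence — cite Speyer's bound with its series-parallel equality case, feed in the series-parallel condition from \cref{thm:finest}, and specialize $c=1$ for the ``in particular'' clause. One small caution: you only need the direction ``all faces series-parallel $\Rightarrow$ equality in Speyer's bound,'' so there is no need to assert the full ``if and only if'' version of Speyer's theorem, which risks overclaiming what is actually proved in \cite{Ktheory}.
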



\subsection{Realizability of positively oriented matroids}

Positroid polytopes and the positive tropical Grassmannian have
applications to realizability questions.
One of the early goals of matroid theory was to find 
axioms that characterized the realizable matroids, i.e. those
that arise 
from full rank matrices as in \cref{def:realizable}.
However, this problem  (over a field
	of characteristic $0$) 
is now considered to be intractible
\cite{MNW2, Vamos}:  in V\'amos's words, 
 ``the missing axiom of matroid theory is lost forever".

There is a notion of \emph{oriented matroids}, in which 
bases have signs: an oriented matroid 
	is a matroid $([n], \B)$ together with a 
	\emph{chirotope},  a function
	$\chi:[n]^k \to \{0, +,-\}$ obeying certain axioms
which roughly encode the three-term Pl\"ucker relations \cite{OrientedMatroidBook}.
If $M$ is a full rank $k \times n$ real matrix, the function taking $(i_1, i_2, \ldots, i_k)$ to the sign of the minor using columns $(i_1, i_2, \dots, i_k)$ is a chirotope, and the realizable oriented matroids are precisely the chirotopes occurring in this way.
Thus, if $M$ represents a point of the positive
Grassmannian, then $M$ gives a chirotope $\chi$ with $\chi(i_1, i_2, \ldots, i_k) \in \{ 0, + \}$ for $1 \leq i_1 < i_2 < \cdots < i_k \leq n$.

A \emph{positively oriented matroid} is a chirotope $\chi$ such that
  $\chi(i_1, i_2, \ldots, i_k) \in \{ 0, + \}$ for $1 \leq i_1 < i_2 < \cdots < i_k \leq n$.
The following  was conjectured by da Silva in 1987 \cite{daS}.
\begin{theorem} \cite{ARW2, SW}
Every positively oriented matroid is realizable.  In other words,
every positively oriented matroid is a positroid.
\end{theorem}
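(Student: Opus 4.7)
The plan is to use the equivalence between positroids and lattice polytopes with a special combinatorial structure. Given a positively oriented matroid $M = ([n], \mathcal{B})$ of rank $k$ with chirotope $\chi$, consider its matroid polytope $\Gamma_M := \convex\{e_B : B \in \mathcal{B}\} \subset \mathbb{R}^n$. The strategy is to show $\Gamma_M$ is a positroid polytope, from which the conclusion follows: a matroid is uniquely recovered from its basis polytope, so equality of polytopes forces $M$ to coincide with the positroid realizing $\Gamma_M$.

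First I would invoke the characterization of positroid polytopes due to Ardila--Rinc\'on--Williams: a matroid polytope $\Gamma_M$ is a positroid polytope if and only if every facet of $\Gamma_M$ is supported by a hyperplane of the form $\{x : \sum_{i \in I} x_i = c\}$ for some cyclic interval $I$ of $[n]$, or equivalently, every connected component of the restriction of $M$ to each flat is a cyclic interval. This reduces the realizability problem to a purely combinatorial assertion about the arrangement of the bases of $M$.

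The main step, and the main obstacle, is to verify that for a positively oriented matroid the cyclic-interval property holds automatically. Suppose for contradiction that some flat $F$ of $M$ has a connected component that is not a cyclic interval. Then one can extract a cyclic quadruple $a < b < c < d$ with $a, c$ in one part and $b, d$ in another part of the resulting partition, together with $S \in \binom{[n]}{k-2}$ disjoint from $\{a,b,c,d\}$ witnessing the failure. The three-term Pl\"ucker relation applied to the chirotope reads
$$\chi(Sac)\,\chi(Sbd) \;=\; \chi(Sab)\,\chi(Scd) \;+\; \chi(Sad)\,\chi(Sbc),$$
and the fact that every nonzero value of $\chi$ on an increasingly ordered tuple is strictly positive forces a sign contradiction: for instance, if the connectivity hypothesis makes $\chi(Sac)$ and $\chi(Sbd)$ both nonzero but the partition makes $\chi(Sab)\chi(Scd)$ and $\chi(Sad)\chi(Sbc)$ both vanish, then the left side is positive while the right side is zero. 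The technical heart of the proof is the case analysis showing that the non-cyclicity can always be arranged to produce such a contradiction by exploiting basis-exchange within the connected component.

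Once $\Gamma_M$ is known to be cut out by cyclic-interval inequalities, the Ardila--Rinc\'on--Williams classification produces a positroid $M'$ with $\Gamma_{M'} = \Gamma_M$, and comparing vertex sets gives $M = M'$, so $M$ is realizable by a nonnegative real matrix via \cref{def:positroid}. An alternative, more tropical route would instead construct the height function $P_I = 0$ for $I \in \mathcal{B}$ and $P_I > 0$ otherwise, verify it is a positive tropical Pl\"ucker vector (which is again essentially the same three-term sign analysis, in the guise of \cref{def:tropPlucker}), invoke $\Dr^+_{k,n} = \Trop^+\Gr_{k,n}$ to lift $M$ to a point of $\Gr_{k,n}^{\ge 0}$ over Puiseux series, and then specialize the leading terms to obtain a real realization.
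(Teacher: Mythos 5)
Your two routes are precisely the two approaches the paper cites for this theorem: the polytope argument in your first three paragraphs is the Ardila--Rinc\'on--Williams proof of \cite{ARW2}, and the tropical alternative at the end is the Speyer--Williams proof of \cite{SW}. Since the paper only gives one-sentence pointers to each rather than a self-contained argument, your sketch is at a comparable level of detail and follows the same strategy.

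Two corrections are needed. The displayed identity
\[
\chi(Sac)\,\chi(Sbd) \;=\; \chi(Sab)\,\chi(Scd) \;+\; \chi(Sad)\,\chi(Sbc)
\]
is not a valid statement about a chirotope: $\chi$ takes values in $\{0,+,-\}$, and these do not add. What you want is the three-term Grassmann--Pl\"ucker axiom for oriented matroids, which says (in one form) that if $\chi(Sac)\chi(Sbd)\neq 0$ then at least one of $\chi(Sab)\chi(Scd)$, $\chi(Sad)\chi(Sbc)$ is nonzero and has the same sign. Your conclusion --- that $\chi(Sac)\chi(Sbd)$ cannot be nonzero while both cross terms vanish --- is the correct consequence, so the slip is reparable, but the equality should be replaced by the actual axiom. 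Similarly, in the tropical route the height on non-bases should be $+\infty$ (working in $\R\cup\{\infty\}$), not merely a positive real, to ensure $\Gamma_M$ itself is the unique facet of $\mathcal{D}_P$.

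The substantive gap, which you flag yourself, is the case analysis: showing that a failure of the cyclic-interval property always yields $a<b<c<d$ and $S$ with $\chi(Sac)\chi(Sbd)\neq 0$ while both $\chi(Sab)\chi(Scd)$ and $\chi(Sad)\chi(Sbc)$ vanish. That case analysis is the actual content of \cite{ARW2}; likewise the verification that your candidate height function lies in $\Dr^+_{k,n}$ is the content of \cite{SW}. As written, your proposal correctly reduces the theorem to these intermediate claims but does not establish them.
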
 
The first proof used the geometry of positroid polytopes \cite{ARW2}.
The second 
 used the fact that if $P \in \Trop \Gr_{k,n}$,
  every face of  the subdivision $\mathcal{D}_P$
 corresponds to a realizable matroid \cite{SW}.

\section{The amplituhedron and the sign stratification}
\label{sec:stratification}

Building on \cite{abcgpt},
Arkani-Hamed and Trnka 
\cite{arkani-hamed_trnka} 
introduced 
the \emph{(tree) amplituhedron}, which
is the image of the positive Grassmannian under a positive linear map.
Let $\Mat_{n,p}^{>0}$ denote the set of $n\times p$ matrices whose maximal minors
are positive.
Throughout this section 
we will make the convention that $k,n,m$ are positive integers such that 
 $k+m \leq n$.

\begin{defn}\label{defn_amplituhedron}
Let $Z\in \Mat_{n,k+m}^{>0}$. 
	The \emph{amplituhedron map} 
$\tilde{Z}:\Gr_{k,n}^{\ge 0} \to \Gr_{k,k+m}$
      is
	defined by
	$\tilde{Z}(C):=CZ$,
	where 
 $C$ is a $k \times n$ matrix representing an element of
	$\Gr_{k,n}^{\ge 0}$,  and  $CZ$ is a $k \times (k+m)$ matrix representing an 
         element of $\Gr_{k,k+m}$.
	The \emph{amplituhedron} $\mathcal{A}_{n,k,m}(Z) \subset \Gr_{k,k+m}$ is the image
$\tilde{Z}(\Gr_{k,n}^{\ge 0})$.
\end{defn}

The condition $Z \in \Mat_{n,k+m}^{>0}$  ensures that 
$\rank(CZ)=k$
 and hence $\tilde{Z}$
is well defined
\cite{arkani-hamed_trnka}.
This condition  can be relaxed: 
the map $\tilde{Z}$ is well-defined
if and only if $\var(v) \geq k$ for all nonzero $v\in \ker(Z)$
 \cite[Theorem 4.2]{karp}.

If $k+m=n$,  $\mathcal{A}_{n,k,m}(Z)$ is isomorphic to
$\Gr_{k,k+m}^{\geq 0}$.
The amplituhedron $\mathcal{A}_{n,k,m}(Z)$ has full dimension $km$
inside $\Gr_{k,k+m}$,  but it 
does not 
lie inside $\Gr_{k,k+m}^{\geq 0}$ in general.

\begin{example}\label{ex:polytope}
If $k=1$ and $m=2$, $\mathcal{A}_{n,1,2}(Z)$ is a polygon in 
projective space $\PP^2$.  
To see this, let $Z_1,\dots, Z_n$ denote
the rows of $Z\in \Mat_{n,3}^{>0}$; we can think of each $Z_i$ as a point in $\PP^{2}$.
Since $Z\in 
\Mat_{n,3}^{>0}$, the points $Z_1,\dots,Z_n$ are arranged in convex position
like the vertices of a polygon, see 
	\cref{fig:T2}.
Elements $C\in \Gr_{1,n}^{\geq 0}$ are just non-zero vectors
with 
	coordinates in $\R_{\geq 0}$.  If $C = e_i$ then $CZ = Z_i$, so all points 
$Z_i$ lie in $\mathcal{A}_{n,1,2}$.  As $C$ 
varies over 
$\Gr_{1,n}^{\geq 0}$, the image $CZ$ varies over all convex
	combinations of the $Z_i$'s, and hence $\mathcal{A}_{n,1,2}(Z)$ is an $n$-gon.
For $k=1$ and $m$ general, it follows from \cite{Sturmfels} that 
$\mathcal{A}_{n,1,m}(Z)$ is a {\itshape cyclic polytope} in  $\PP^m$.  
\end{example}

Physicists are most interested in the amplituhedron
$\A_{n,k,4}(Z)$.  Since 
$\AA(Z) \subset \Gr_{k,k+m}$, when $m$ is small it is sometimes more convenient
to take orthogonal complements and work with $\Gr_{m,k+m}$ instead of $\Gr_{k,k+m}$.  This motivates the following definition of the 
\emph{B-amplituhedron}, which is homeomorphic to the 
``A-amplituhedron.''

\begin{defn}[{\cite[Definition 3.8]{karpwilliams}}]
	\label{def:B}
       Choose $Z \in \Mat_{n,k+m}^{>0}$, 
	and let 
	$W\in \Gr_{k+m,n}^{>0}$ 
	be the column span of $Z$.
	We define the \emph{B-amplituhedron} to be
	$$\B_{n,k,m}(W):=\{V^{\perp} \cap W \ \vert \ V\in \Grk\} \subset 
	\Gr_{m}(W),$$ 
	where $\Gr_m(W)$ denotes the Grassmannian of $m$-planes 
	in $W$.
\end{defn}

The idea behind the identification 
$\B_{n,k,m}(W) \cong \A_{n,k,m}(Z)$
is that we obtain 
$\B_{n,k,m}(W) \subset \Gr_m(W) \subset \Gr_{m,n}$ from 
$\A_{n,k,m}(Z) \subset \Gr_{k,k+m}$ by taking orthogonal complements
in $\R^{k+m}$, then applying an isomorphism 
between $\R^{k+m}$ and $W$
so that our subspaces lie in $W$, not $\R^{k+m}$.

\begin{prop} [{\cite[Lemma 3.10 and Proposition 3.12]{karpwilliams}}]
\label{prop:AB}
     Choose $Z \in \Mat_{n,k+m}^{>0}$, 
and let 
	$W\in \Gr_{k+m,n}^{>0}$ 
	 be the column span of $Z$.
	We 
	 define a map $f_Z: \Gr_m(W) \to \Gr_{k,k+m}$ by 
	$$f_Z(X):= Z(X^{\perp}) = \{Z(x): x\in X^{\perp}\}.$$
	Then 
$f_Z$ restricts to a homeomorphism from 
	$\B_{n,k,m}(W)$ onto $\AA(Z)$, sending
	$V^{\perp} \cap W$ to $\tilde{Z}(V)$ for all 
	$V\in \Grk$.    
\end{prop}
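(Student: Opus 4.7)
The plan is to view $Z$ as the linear map $\mathbb{R}^n \to \mathbb{R}^{k+m}$ given by $v \mapsto vZ$. Its kernel is $W^{\perp}$ since $W = \operatorname{colspan}(Z)$, and its rank is $k+m$, so its restriction $Z|_W : W \to \mathbb{R}^{k+m}$ is a linear isomorphism. With this in hand, I would first argue that $f_Z : \Gr_m(W) \to \Gr_{k,k+m}$ is itself a homeomorphism on all of $\Gr_m(W)$, and then show that it carries $\B_{n,k,m}(W)$ onto $\A_{n,k,m}(Z)$ in the claimed way; the restriction is then automatically a homeomorphism onto its image.

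For the global claim, for any $X \in \Gr_m(W)$, the containment $X \subset W$ yields the orthogonal decomposition $X^{\perp} = (X^{\perp} \cap W) \oplus W^{\perp}$ inside $\mathbb{R}^n$. Applying $Z$ and using $Z(W^{\perp}) = 0$ gives $f_Z(X) = Z(X^{\perp} \cap W)$, which has dimension $k$ because $Z|_W$ is an isomorphism. An explicit continuous inverse is $Y \mapsto ((Z|_W)^{-1}(Y))^{\perp} \cap W$, with orthogonal complement taken in $\mathbb{R}^n$; to check it inverts $f_Z$ one uses the general identity $(A^{\perp} \cap W)^{\perp} \cap W = A$ for $A \subset W$, which is just double complementation inside $W$.

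Next I would verify that $f_Z(V^{\perp} \cap W) = \tilde{Z}(V)$ for every $V \in \Grk$. First this requires $V^{\perp} \cap W$ to lie in $\Gr_m(W)$, i.e., to have dimension exactly $m$, which by dimension count amounts to $V \cap W^{\perp} = 0$. This is the sole place where positivity enters. I would invoke \cref{gantmakher_krein}: a nonzero $v \in V$ has $\var(v) \leq k-1$ since $V \in \Grk$, while a nonzero $w \in W^{\perp}$ has $\var(w) \geq k+m$ since $W \in \Gr_{k+m,n}^{>0}$, which are incompatible as $m \geq 1$. Given this, $(V^{\perp} \cap W)^{\perp} = V + W^{\perp}$ in $\mathbb{R}^n$, so $f_Z(V^{\perp} \cap W) = Z(V + W^{\perp}) = Z(V) = \tilde{Z}(V)$. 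Taking the union over $V \in \Grk$ shows $f_Z(\B_{n,k,m}(W)) = \A_{n,k,m}(Z)$, and combining this with the global homeomorphism yields the desired restriction statement.

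The main obstacle is the lone positivity step: ensuring $V^{\perp} \cap W$ has dimension exactly $m$ via Gantmakher--Krein. Everything else is essentially bookkeeping of orthogonal complements through the isomorphism $Z|_W : W \cong \mathbb{R}^{k+m}$, so once dimensions work out, the homeomorphism claim is formal.
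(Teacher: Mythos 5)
Your proof is correct. It uses exactly the ingredients the paper alludes to when it introduces \cref{prop:AB}: the linear isomorphism $Z|_W : W \to \R^{k+m}$ with kernel $W^\perp$, elementary bookkeeping of orthogonal complements (in particular the decomposition $X^\perp = (X^\perp \cap W) \oplus W^\perp$ and the identity $(A^\perp \cap W)^\perp \cap W = A$ for $A \subseteq W$), and the Gantmakher--Krein sign-variation criterion of \cref{gantmakher_krein} as the sole positivity input, which forces $V \cap W^\perp = 0$ and hence $\dim(V^\perp \cap W) = m$; this is the same route, and the modular structure of first establishing that $f_Z$ is a global homeomorphism $\Gr_m(W) \to \Gr_{k,k+m}$ makes the restriction step immediate.
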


We next discuss coordinates on $\AA(Z)\subset \Gr_{k,k+m}$ and 
$\B_{n,k,m}(W) \subset \Gr_{m,n}$.

\begin{definition}\label{def:twistor}
       Choose
        $Z \in \Mat_{n,k+m}^{>0}$ 
	with rows  $Z_1,\dots, Z_n \in \R^{k+m}$.
	Given a matrix $Y$ 
	with rows $y_1,\dots,y_k$
	representing an element of $\Gr_{k,k+m}$, 
	and $\{i_1,\dots, i_m\} \subset [n]$,
	we define the \emph{twistor coordinate}, denoted 
	$$\langle Y Z_{i_1} Z_{i_2} \cdots Z_{i_m} \rangle \text{ or }
        \langle y_1, \dots,y_k, Z_{i_1}, \dots, Z_{i_m} \rangle,$$
        to be  the determinant of the
	matrix with rows 
        $y_1, \dots,y_k, Z_{i_1}, \dots, Z_{i_m}$.
        
	If $I=\{i_1<\dots <i_m\}$, we also 
	use  $\lrangle{Y Z_I}$ to denote 
        $\langle Y Z_{i_1} Z_{i_2} \cdots Z_{i_m} \rangle$.
\end{definition}

\cref{lem:coordinates} shows that the 
twistor coordinates of the amplituhedron
$\AA(Z) \subset \Gr_{k,k+m}$
are equal to the Pl\"ucker coordinates of the B-amplituhedron 
$\B_{n,k,m}(W) \subset \Gr_{m,n}$.
\begin{lemma} [{\cite[(3.11)]{karpwilliams}}]
	\label{lem:coordinates}
	If we let $Y:=f_Z(X)$ in \cref{prop:AB},
	we have
	\begin{equation}\label{eq:identify}
	p_{I}(X) = \lrangle{Y Z_{I}}
		\qquad \text{ for all } \quad
		I \in {[n] \choose m}. 
	\end{equation}
\end{lemma}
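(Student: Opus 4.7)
The plan is to parametrize $X$ explicitly via $\ker Y$, match $\langle Y Z_I\rangle$ against $p_I(X)$ by a single block-triangular determinant computation, and finally invoke the projective nature of Pl\"ucker and twistor coordinates.

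The first step is to identify $X$ with the right null space of $Y$. Choose a $k\times n$ matrix $V$ representing $V\in\Grk$ with $X = V^\perp\cap W$, so that $Y = \tilde{Z}(V) = VZ$ is a $k\times(k+m)$ matrix. Then
\begin{equation*}
\ker Y = \{v\in\R^{k+m}: VZv = 0\} = \{v\in\R^{k+m}: Zv\in V^\perp\},
\end{equation*}
and since $Z$ is injective (its maximal minors are positive), $Z$ sends $\ker Y$ bijectively onto $V^\perp\cap W = X$. In particular $\dim\ker Y = m = \dim X$ and $X = Z(\ker Y)$, which is the precise meaning of the notation $f_Z(X)=Z(X^\perp)$ in \cref{prop:AB}.

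Given this, I would pick a $(k+m)\times m$ matrix $C$ whose columns form a basis of $\ker Y$. Then the $n\times m$ matrix $ZC$ has column span equal to $X$, so it represents $X\in\Gr_{m,n}$, and for $I=\{i_1<\cdots<i_m\}\subset[n]$ one has $p_I(X)=\det(Z_I\cdot C)$, where $Z_I$ is the $m\times(k+m)$ submatrix of $Z$ on rows $i_1,\dots,i_m$. The key computation is now to choose any $(k+m)\times k$ matrix $B$ for which $[B\,|\,C]$ is invertible, and to observe
\begin{equation*}
\begin{pmatrix} Y \\ Z_I \end{pmatrix}\cdot [B\,|\,C] \;=\; \begin{pmatrix} YB & YC \\ Z_I B & Z_I C \end{pmatrix} \;=\; \begin{pmatrix} YB & 0 \\ Z_I B & Z_I C \end{pmatrix},
\end{equation*}
using $YC=0$. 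Taking determinants of this lower-block-triangular product gives $\langle Y Z_I\rangle\cdot\det[B\,|\,C] = \det(YB)\cdot\det(Z_I C) = \det(YB)\cdot p_I(X).$

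Since the scalar $\det(YB)/\det[B\,|\,C]$ does not depend on $I$, the twistor coordinates $(\langle Y Z_I\rangle)_I$ and the Pl\"ucker coordinates $(p_I(X))_I$ coincide projectively; rescaling the representative $ZC$ of $X$ (equivalently, rescaling $C$) absorbs this factor and yields the literal equality \eqref{eq:identify}. The only real obstacle is notational bookkeeping — carefully distinguishing $X\subset\R^n$, on which $p_I$ is computed, from its preimage $\ker Y\subset\R^{k+m}$, which is what interacts cleanly with the rows of $Y$. Once the identification $X = Z(\ker Y)$ is in place, the identity reduces to the single block-triangular determinant above.
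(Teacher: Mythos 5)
Your argument is correct, and it is a clean, self-contained proof. The key observations — that $X = Z(\ker Y)$ (so that $ZC$ represents $X$ once $C$ spans $\ker Y$), and that stacking $Y$ over $Z_I$ and right-multiplying by $[B\,|\,C]$ yields a block-lower-triangular matrix because $YC = 0$ — immediately give
$\langle Y Z_I\rangle\,\det[B\,|\,C] = \det(YB)\,\det(Z_I C)$
with the scalar $\det(YB)/\det[B\,|\,C]$ independent of $I$, which is exactly what the lemma (a projective identity between two systems of homogeneous coordinates) asserts. You should also check that $\det(YB)\neq 0$; this holds since $Y$ has full rank $k$ and the column span of $B$ is complementary to $\ker Y$. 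Two small remarks. First, your parenthetical reading of the notation $f_Z(X)=Z(X^\perp)$ is not quite what \cref{prop:AB} intends: there $X^\perp$ is the orthogonal complement of $X$ in $\R^n$ and ``$Z(\cdot)$'' really means multiplication by $Z^T$, giving $f_Z(X) = Z^T(V+W^\perp) = $ row span of $CZ$. Your statement $X = Z(\ker Y)$ is the inverse description and is correct, and it is the one your proof uses, so nothing breaks — but it is not a literal paraphrase of the definition. Second, ``rescaling $C$'' does not change the proportionality constant (rescaling $C$ by $\lambda$ multiplies both $p_I(X)$ and $\det[B\,|\,C]$ by $\lambda^m$); what you actually want is to replace $C$ by $CM$ for $M\in GL_m$ with $\det M = \det(YB)/\det[B\,|\,C]$, or simply to observe that the claimed identity is between projective coordinates, so proportionality is exactly equality. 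The paper cites this identity from \cite{karpwilliams} without reproducing the proof, so there is nothing to compare against in the text; your block-triangular determinant argument is the natural one and matches the style of the computation there.
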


	By \cref{def:B}
and \cref{gantmakher_krein}, 
if $X \in \B_{n,k,m}(W)$ and $w\in X\setminus \{0\}$ then 
$$k \leq \overline{\var}(w) \leq k+m-1.$$
When $m=1$ this leads to the following 
sign variation description
of the amplituhedron.

	\begin{theorem}
		[{\cite[Corollary 3.19]{karpwilliams}
		and \cref{lem:coordinates}}] 
	 We have 
	\begin{align}
		\label{b:flip}
		\B_{n,k,1}(W) &= \{w \in \PP(W) \ \vert \ \overline{\var}(w)=k\} \subset
	\PP(W), \text { or equivalently,} \\
		\label{a:flip}
		\A_{n,k,1}(Z) &= 
	\{Y \in \Gr_{k,k+1} \ \vert \ 
	\overline{\var}(\lrangle{YZ_1}, \lrangle{YZ_2},\dots,
	\lrangle{YZ_n}) = k \}.
	\end{align}
Moreover, $\mathcal{A}_{n,k,1}(Z) \cong \B_{n,k,1}(W)$ 
can be identified with the
complex of bounded faces of a cyclic hyperplane arrangement.
\end{theorem}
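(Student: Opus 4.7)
The plan is to establish \eqref{b:flip} first, derive \eqref{a:flip} as an immediate consequence, and finally explain the identification with the bounded complex of a cyclic hyperplane arrangement.

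For \eqref{b:flip}, the inclusion $\B_{n,k,1}(W) \subseteq \{w \in \PP(W) : \overline{\var}(w) = k\}$ is immediate from the sign-variation bounds recalled just before the theorem, specialized to $m=1$. Indeed, if $X = V^{\perp} \cap W = \langle w \rangle$ for some $V \in \Grk$, then $w \in V^{\perp}\setminus\{0\}$ forces $\overline{\var}(w) \geq k$ by \cref{gantmakher_krein}(i), while $w \in W\setminus\{0\}$ with $W \in \Gr_{k+1,n}^{>0}$ forces $\overline{\var}(w) \leq (k+1) - 1 = k$ by \cref{gantmakher_krein}(ii). For the reverse inclusion, given $w \in W$ with $\overline{\var}(w) = k$, I would construct $V \in \Grk$ with $V^{\perp} \cap W = \langle w \rangle$. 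By \cref{gantmakher_krein}(i), it suffices to build an $(n-k)$-dimensional subspace $U \ni w$ with $\overline{\var}(u) \geq k$ for all $u \in U \setminus \{0\}$, and then set $V := U^{\perp}$. Such a $U$ can be produced by starting from a generic $V_0 \in \Gr_{k,n}^{>0}$, deforming it along a one-parameter family whose orthogonal complements pass through $w$, and extracting a limit using closedness of $\Grk$ and of the sign-variation condition. Genericity ensures that $V^{\perp} \cap W$ remains one-dimensional, hence equals $\langle w \rangle$.

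The equivalence of \eqref{a:flip} and \eqref{b:flip} then follows from \cref{prop:AB} and \cref{lem:coordinates}: the homeomorphism $f_Z : \B_{n,k,1}(W) \to \AA(Z)$ sends $X = \langle w \rangle$ to $Y = f_Z(X)$, and \cref{lem:coordinates} identifies the Pl\"ucker coordinates $p_i(X)$ (the coordinates of $w$) with the twistor coordinates $\lrangle{YZ_i}$. Hence the two sign sequences agree up to a common scalar, and therefore have the same $\overline{\var}$.

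For the bounded-complex identification, I would parametrize $\Gr_{k,k+1}$ by $\PP^k$ via $Y \mapsto Y^{\perp}$, so each $Y$ corresponds to a single point $y^* \in \PP^k$. Each twistor coordinate $\lrangle{YZ_i}$ becomes, up to a nonzero scalar independent of $i$, the linear functional $y^* \cdot Z_i$, whose zero locus is a hyperplane $H_i \subset \PP^k$. Positivity of $Z$ arranges the $Z_i$ in convex position, so $\{H_1, \dots, H_n\}$ is a \emph{cyclic} hyperplane arrangement. By \eqref{a:flip}, the amplituhedron consists of those $y^*$ whose sign vector $(\sgn(y^* \cdot Z_i))_i$ achieves $\overline{\var} = k$. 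A classical fact about cyclic arrangements is that the cells achieving maximal sign variation coincide with the bounded cells of the associated affine arrangement; taking closures yields the complex of bounded faces. The hardest step I anticipate is the reverse inclusion in \eqref{b:flip}, since pointwise sign-variation bounds behave poorly under taking linear spans, so local data about $w$ alone does not yield the required inequality on all of $U$. A continuity and compactness argument deforming a generic totally positive element seems the cleanest route, but establishing that the limiting $V^{\perp}$ genuinely has one-dimensional intersection with $W$ requires a careful transversality argument.
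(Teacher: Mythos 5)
Your forward inclusion in \eqref{b:flip} is clean and correct: the two directions of \cref{gantmakher_krein} applied to $w$ as an element of $V^\perp\setminus\{0\}$ and of $W\setminus\{0\}$ immediately give $k\le\overline{\var}(w)\le k$. The passage from \eqref{b:flip} to \eqref{a:flip} via \cref{prop:AB} and \cref{lem:coordinates} is also fine. But the reverse inclusion in \eqref{b:flip} — which you yourself flag as the hardest step — is not actually proved. You say ``start from a generic $V_0\in\Gr_{k,n}^{>0}$, deform it along a one-parameter family whose orthogonal complements pass through $w$, and extract a limit,'' but you do not exhibit such a family, you do not explain why the sign-variation condition $\overline{\var}(u)\ge k$ on \emph{all} of $U=V^\perp$ persists in the limit (the set $\{U:\overline{\var}(u)\ge k\ \forall u\in U\setminus\{0\}\}$ is closed, but you need the family to live in it, which is exactly the content you are trying to establish), and you do not justify that the limit $V$ still has $\dim(V^\perp\cap W)=1$ — genericity arguments are precisely what fail under taking limits. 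So this step remains a gap rather than a proof; the paper simply cites {\cite[Corollary 3.19]{karpwilliams}} for it, and that reference proves the existence of the required $V$ by a direct sign-variation construction, not by a limiting argument.

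The identification with the bounded complex also needs more care than you give it. Passing from $\Gr_{k,k+1}$ to $\PP^k$ and reading the twistor coordinates as linear functionals on $y^*=Y^\perp$ is right, but the hyperplanes $H_i$ you obtain live in $\PP^k$; to talk about \emph{bounded} faces one must dehomogenize in a specific affine chart, and the choice is not innocuous. This is exactly why the paper introduces a basis $w^{(0)},\dots,w^{(k)}$ of $W$ with $\spn(w^{(1)},\dots,w^{(k)})\in\Gr_{k,n}^{>0}$ and $w^{(0)}$ \emph{positively oriented}, so that the affine chart $x\mapsto x_1w^{(1)}+\cdots+x_kw^{(k)}+w^{(0)}$ makes the map $\Psi_{\mathcal{H}^W}$ a homeomorphism from the bounded complex $B(\mathcal{H}^W)$ onto $\mathcal{B}_{n,k,1}(W)$. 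Finally, the assertion that ``cells achieving maximal sign variation coincide with the bounded cells of a cyclic arrangement'' is not a classical fact to be invoked; it is precisely one of the main results of {\cite{karpwilliams}} (their Theorem 6.16) and requires its own argument. As written, your proposal reproduces the easy half of the theorem and sketches the rest; the two hard steps are left as acknowledged gaps.
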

To make the latter identification, we choose a 
basis $w^{(0)}, w^{(1)}, \dots, w^{(k)}$ for $W\subset \R^n$
such that $\spn(w^{(1)},\dots,w^{(k)}) \in \Gr_{k,n}^{>0}$
	and $w^{(0)}$ is \emph{positively oriented}
as in \cite[Definition 6.10]{karpwilliams}.
	Then we let $\mathcal{H}^W$ be the hyperplane 
	arrangement in $\R^k$ with hyperplanes
	$$H_i:=\{x\in \R^k \ \vert \ 
	w_i^{(1)} x_1 + \dots + w^{(k)}_i x_k + w^{(0)}_i=0\}
	\quad \text { for } \quad i\in [n].$$
	By \cite[Theorem 6.16]{karpwilliams}, the map
	$$\Psi_{\mathcal{H}^W}: \R^k \to \PP(W),
	\quad \quad
	x \mapsto \lrangle{x_1 w^{(1)} + \dots + x_k w^{(k)} + w^{(0)}}$$
	is a homeomorphism from the bounded complex
	$B(\mathcal{H}^W)$ of 
	$\mathcal{H}^W$ to $\mathcal{B}_{n,k,1}(W)$.
	Moreover, if we partition the elements
	$w=(w_1,\dots,w_n)\in \mathcal{B}_{n,k,1}(W)$
	based on the signs of the $w_i$,  we obtain
	the bounded
	faces of 
	$B(\mathcal{H}^W)$.
	See \cref{fig:hyperplane}.

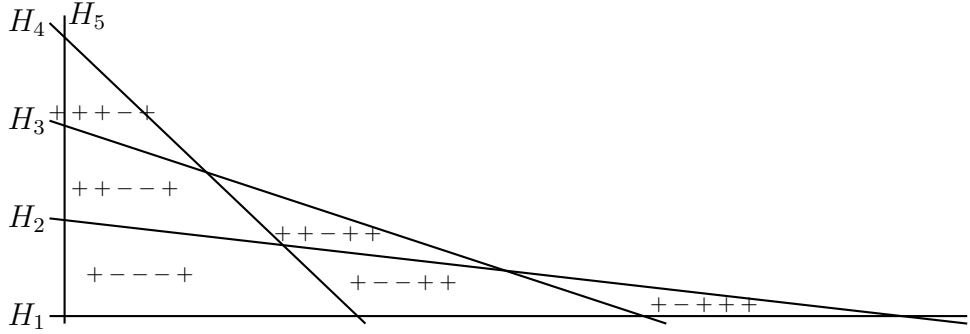
\begin{figure}[h]
\begin{tikzpicture}
	\draw[black, thick] (-.2,0)--(12,0);
	\draw[black, thick] (-.2,3.9)--(4,-.1);
	\draw[black, thick] (-.2,2.6)--(8,-.1);
	\draw[black, thick] (-.2,1.3)--(12,-.1);
	\draw[black, thick] (0,-.1)--(0,4);
	\draw (-.5, 0) node {$H_1$};
	\draw (-.5, 1.3) node {$H_2$};
	\draw (-.5, 2.6) node {$H_3$};
	\draw (-.5, 3.9) node {$H_4$};
	\draw (.3, 4) node {$H_5$};
	\draw (.5,2.7) node {\tiny $+++-+$};
	\draw (.8,1.7) node {\tiny $++--+$};
	\draw (1,.55) node {\tiny $+---+$};
	\draw (4.5,.45) node {\tiny $+--++$};
	\draw (3.5,1.1) node {\tiny $++-++$};
	\draw (8.5,.15) node {\tiny $+-+++$};
\end{tikzpicture}
	\caption{The hyperplane arrangement ${\mathcal{H}^W}$ from $\mathcal{B}_{5,2,1}(W)\cong B({\mathcal{H}^W})$, where 
	$w^{(0)} = (0,-9,-6, -3, 0)$, $w^{(1)} = (0,1,1,1,1)$, and $w^{(2)} = (1,9,3,1,0)$. 
	Its bounded faces are labeled by sign vectors. Here we have labeled only the maximal faces.}
	\label{fig:hyperplane}
\end{figure}


Before turning to the case $m=2$, we need to 
introduce some notation.

\begin{remark}
\label{rem:twisted}
Let $Z\in \Mat^{\geq 0}_{n,p}$ with $n \geq p$ have rows $Z_1,Z_2, \dots, Z_n$, and let $\hat{Z}_i$ denote
$(-1)^{p-1} Z_i$.
Then the matrix with rows
 $Z_2,\dots, Z_n, \hat{Z}_1$ also lies in $\Mat^{>0}_{n,p}$.  Thus matrices with maximal minors
	nonnegative (and elements of $\Grk$)
	exhibit 
a \emph{twisted cyclic symmetry}.
\end{remark}

The following sign variation description
of the amplituhedron was conjectured by
Arkani-Hamed--Thomas--Trnka
\cite{ATT}. 

\begin{theorem}[{\cite[Theorem 5.1]{PSBW}}]
	\label{thm:main1}
Fix $k<n$  and $Z\in \Mat^{>0}_{n,k+2}$.  
        Let
        \begin{align*}
                \mathcal{F}^\circ_{n,k,2}(Z):=  \{Y\in Gr_{k,k+2} \ &\vert \
        \langle Y Z_i Z_{i+1}\rangle>0 \text{ for }1 \leq i \leq n-1,
        \text{ and }   \langle Y Z_n \hat{Z}_1 \rangle >0,\\
                &\text{ and }
                \var({\langle Y Z_1 Z_2\rangle,
        \langle Y Z_1 Z_3\rangle,  \dots
                \langle Y Z_1 Z_n\rangle})=k.\}
        \end{align*}
        Then $\mathcal{A}_{n,k,2}(Z) = \overline{\mathcal{F}^\circ_{n,k,2}(Z)}.$
\end{theorem}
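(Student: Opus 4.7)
The plan is to pass to the B-amplituhedron picture via \cref{prop:AB} and \cref{lem:coordinates}. Set $W \in \Gr_{k+2,n}^{>0}$ equal to the column span of $Z$ and identify $Y = \tilde{Z}(C)$ with $X := V^\perp \cap W \in \Gr_2(W)$, where $V = \rowspan C \in \Gr_{k,n}^{\geq 0}$. Under this identification $\lrangle{YZ_iZ_j} = p_{ij}(X)$ for $i<j$, so the defining conditions of $\mathcal{F}^\circ_{n,k,2}(Z)$ translate to three sign conditions on $X \in \Gr_2(W)$: $p_{i,i+1}(X) > 0$ for $1 \leq i \leq n-1$; $(-1)^k p_{1n}(X) > 0$ (absorbing the $\hat{Z}_1$ twist from \cref{rem:twisted}); and $\var(p_{12}(X),p_{13}(X),\dots,p_{1n}(X)) = k$. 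The problem then becomes to characterize a dense open subset of $\B_{n,k,2}(W)$ by these Pl\"ucker sign conditions.

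First I would establish the inclusion $\mathcal{A}_{n,k,2}(Z) \subseteq \overline{\mathcal{F}^\circ_{n,k,2}(Z)}$ by verifying all three conditions on a generic $C \in \Gr^{>0}_{k,n}$ (which has dense image in the amplituhedron). The positivity of the consecutive and twisted-cyclic twistors follows by Cauchy--Binet: $\lrangle{CZ\,Z_iZ_{i+1}}$ expands as a sum over $J \in \binom{[n]\setminus\{i,i+1\}}{k}$ of $p_J(C)$ times a $(k{+}2)\times(k{+}2)$ minor of $Z$ on rows $J \cup \{i,i+1\}$; positivity of all minors of $Z$ and of $C$ gives the claim, with \cref{rem:twisted} handling the cyclic wrap-around. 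For the variation condition, apply \cref{gantmakher_krein}: $V \in \Gr_{k,n}^{\ge 0}$ forces $\var(v) \leq k-1$ for $v\in V$ and $\overline{\var}(w) \geq k$ for $w \in V^\perp$. Choosing a basis of $X \subset V^\perp$ whose Pl\"ucker data against $e_1, e_j$ reads off exactly the sequence $(\lrangle{YZ_1Z_j})_{j=2}^n$, these two inequalities sandwich the variation at $k$.

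For the reverse inclusion $\mathcal{F}^\circ_{n,k,2}(Z) \subseteq \mathcal{A}_{n,k,2}(Z)$ I would use a closedness-plus-connectedness argument. The amplituhedron is the continuous image of a compact set, hence closed in $\Gr_{k,k+2}$. The set $\mathcal{F}^\circ_{n,k,2}(Z)$ is open and avoids the locus where any consecutive (or twisted cyclic) twistor vanishes, so $\mathcal{A}_{n,k,2}(Z) \cap \mathcal{F}^\circ_{n,k,2}(Z)$ is relatively closed in $\mathcal{F}^\circ_{n,k,2}(Z)$; it is also relatively open because any $Y \in \mathcal{F}^\circ$ sufficiently close to the amplituhedron interior is of the form $\tilde{Z}(C)$ for a perturbed $C \in \Gr^{>0}_{k,n}$. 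The forward direction shows the intersection is nonempty. It then suffices to show $\mathcal{F}^\circ_{n,k,2}(Z)$ is connected; I would do this by writing it as a controlled union of sign strata (in the sense of the sign stratification in \cref{sec:stratification}), all of which share the same sign pattern on consecutive twistors and only differ on non-adjacent twistors in a way constrained by $\var = k$, and then showing each such stratum is connected and that adjacent strata share common boundary within $\mathcal{F}^\circ$.

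The principal obstacle is pinning the variation at \emph{exactly} $k$ rather than just bounding it above or below. The two halves of \cref{gantmakher_krein} applied to $V$ and $V^\perp$ produce the bounds $\var \leq k$ and $\overline{\var} \geq k$ respectively on the relevant vectors, but the equality $\var = k$ on the specific sequence $(\lrangle{YZ_1Z_j})_j$ requires identifying explicit vectors in $V$ and $V^\perp \cap W$ that realize the extremes, together with the rigidity that all entries in the sequence are nonzero once the consecutive twistors are positive. Executing this matching, and checking that no extraneous sign strata of $\Gr_{k,k+2}$ satisfy all three conditions simultaneously (thereby avoiding a spurious piece outside the amplituhedron), is the technical heart of the argument.
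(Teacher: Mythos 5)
Your translation to the B-amplituhedron picture via \cref{prop:AB} and \cref{lem:coordinates} is the right setting, and your forward inclusion is sound. In fact the ``principal obstacle'' you flag at the end — pinning the variation at exactly $k$ — has a clean resolution you don't quite reach: for $x = (0, p_{12}(X), \ldots, p_{1n}(X)) \in X = V^\perp \cap W$, \cref{gantmakher_krein}(ii) applied to $V \in \Gr_{k,n}^{>0}$ gives $\var(x) \geq k$, and applied to $W \in \Gr_{k+2,n}^{>0}$ gives $\var(x) \leq \overline{\var}(x) \leq k+1$; since $p_{12}(X) > 0$ and $\sgn(p_{1n}(X)) = (-1)^k$ (from the twisted consecutive condition and \cref{rem:twisted}), the variation has the parity of $k$, which together with $\var(x) \in \{k, k+1\}$ forces $\var(x) = k$. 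This needs no genericity assumption on middle entries being nonzero — the parity argument only uses the first and last.

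The reverse inclusion is where there is a genuine gap, and it is not a small one. Your clopen-plus-connectedness strategy requires two facts you have not established and that are each substantial. First, openness of $\mathcal{A}_{n,k,2}(Z) \cap \mathcal{F}^\circ_{n,k,2}(Z)$ inside $\mathcal{F}^\circ_{n,k,2}(Z)$ amounts to showing $\partial \mathcal{A}_{n,k,2}(Z) \cap \mathcal{F}^\circ_{n,k,2}(Z) = \emptyset$, i.e., that every topological boundary point of the amplituhedron has some consecutive or twisted-cyclic twistor vanishing. That is essentially a facet characterization of the amplituhedron, which is at least as hard as (and in spirit equivalent to) the theorem itself; the phrase ``sufficiently close to the amplituhedron interior is of the form $\tilde{Z}(C)$ for a perturbed $C$'' asserts exactly this without argument. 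One would at minimum need to show that the image of the top cell $\Gr_{k,n}^{>0}$ under $\tilde Z$ is open, and that images of all lower cells meeting $\mathcal{F}^\circ_{n,k,2}(Z)$ lie in its closure — neither of which is free, since lower-dimensional positroid cells can have full-dimensional Grasstope images. Second, connectedness of $\mathcal{F}^\circ_{n,k,2}(Z)$ is asserted and then deferred to a sign-stratum decomposition whose individual pieces and adjacencies are not analyzed; the sign strata (in the sense of \cref{def:chamber}) satisfying your three conditions correspond to what the paper later identifies as the $w$-chambers $\asimpo{w}$, and showing these are each nonempty, connected, and glue along shared boundaries is a significant portion of the content of \cref{thm:wSimpCover} and \cref{prop:simplexContainment}. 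So the proposal effectively reduces \cref{thm:main1} to two other unproven claims of comparable difficulty, rather than settling it.
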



To generalize \eqref{a:flip}
	and \cref{thm:main1} for $m>2$, 
we first observe the following \cite{ATT}.
\begin{lemma}
\label{lem:boundary}
Let $Z\in \Mat_{n,k+m}^{>0}$ and $Y=CZ$ 
	for $C\in \Gr_{k,n}^{>0}$. 
Let $r = \lfloor{\frac{m}{2} \rfloor}.$
	\begin{align*}
		&\text{If $m$ is even, } \lrangle{Y Z_I} > 0 \text{ for any set }
	I:=\{i_1 < i_1+1 < i_2<i_2+1< \dots < i_{r} < i_{r}+1\} \in {[n] \choose m}\\
		&\quad \text{ and } \lrangle{Y Z_I Z_n \hat{Z}_1} >0 \text{ for any set }
		I:=\{i_1 < i_1+1 < \dots < i_{r-1} < i_{r-1}+1\}\in {[2,n-1] \choose m-2}.\\
		&\text{If $m$ is odd, }
		(-1)^k \lrangle{Y Z_I} > 0 \text{ for any set }
	I:=\{1=i_0 < i_1 < i_1+1 < \dots < i_{r} < i_{r}+1\} \in {[n] \choose m} \\
		&\quad \text{ and } \lrangle{Y Z_I}>0 \text{ for any set  }
		I:=\{i_1 < i_1+1 < \dots < i_{r} < i_{r}+1<i_{r+1}=n \} \in {[n] \choose m}.
	\end{align*}
\end{lemma}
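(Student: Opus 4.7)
The plan is to expand each twistor coordinate via Cauchy--Binet and reduce the positivity claim to a pure combinatorial sign computation. Writing $Y=CZ$ with $C\in\Gr_{k,n}^{>0}$, let $M$ be the $(k+m)\times n$ matrix whose top $k$ rows are $C$ and whose bottom $m$ rows are $e_{i_1},\ldots,e_{i_m}$. Then
\[
\begin{pmatrix} Y\\ Z_I\end{pmatrix}=MZ,
\]
so Cauchy--Binet gives $\lrangle{YZ_I}=\sum_{J\in\binom{[n]}{k+m}}\det(M^J)\,p_J(Z)$, where $M^J$ denotes the $(k+m)\times(k+m)$ submatrix of $M$ on columns $J$. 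Only $J$ containing $I$ contribute, and for such $J=I\sqcup K$ with $|K|=k$, permuting the columns of $M^J$ to put the columns indexed by $K$ before those indexed by $I$ yields the block form $\bigl(\begin{smallmatrix}C_K & C_I\\ 0 & \Id_m\end{smallmatrix}\bigr)$, so
\[
\det(M^J)=(-1)^{\epsilon(I,K)}\,p_K(C),\qquad \epsilon(I,K):=\sum_{\kappa\in K}\bigl|\{i\in I:i<\kappa\}\bigr|.
\]
Since $C\in\Gr_{k,n}^{>0}$ and $Z\in\Mat_{n,k+m}^{>0}$, every $p_K(C)$ and every $p_{I\sqcup K}(Z)$ is strictly positive, so the sign of $\lrangle{YZ_I}$ is governed entirely by the parity of $\epsilon(I,K)$.

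The key combinatorial input is that if $\kappa\notin I$ and $\{i_j,i_j+1\}\subseteq I$ is a consecutive \emph{cherry}, then $\kappa$ cannot lie strictly between $i_j$ and $i_j+1$; hence each cherry contributes either $0$ or $2$ to $|\{i\in I:i<\kappa\}|$. For even $m$ with $I$ a disjoint union of $r=m/2$ cherries, every $\kappa\in K$ contributes an even amount, so $\epsilon(I,K)$ is even for every admissible $K$ and $\lrangle{YZ_I}>0$. For the odd $m$ subcase with $I=\{1\}\cup(\text{cherries})$, every $\kappa\in K$ satisfies $\kappa\ge 2$, so the element $1\in I$ adds an extra $+1$ to the count per $\kappa$ beyond the even cherry contribution; summing over $K$ gives $\epsilon(I,K)\equiv k\pmod 2$, so $\lrangle{YZ_I}=(-1)^k\sum_K p_K(C)\,p_{I\sqcup K}(Z)$ and the claim $(-1)^k\lrangle{YZ_I}>0$ follows. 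The odd $m$ subcase with $I=(\text{cherries})\cup\{n\}$ is symmetric: as $\kappa<n$ always, the element $n\in I$ contributes $0$, the cherries again contribute evenly, and $\lrangle{YZ_I}>0$.

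For the even $m$ subcase involving $\hat{Z}_1=(-1)^{k+m-1}Z_1$, I would first restore sorted order inside the twistor: moving $Z_1$ from the last slot to the slot immediately after $Y$ requires crossing the $m-1$ arguments $Z_{i_1},\ldots,Z_{i_{r-1}+1},Z_n$, incurring the sign $(-1)^{m-1}$. Combined with $(-1)^{k+m-1}$ from $\hat{Z}_1$, this gives
\[
\lrangle{YZ_I Z_n \hat{Z}_1}=(-1)^{k+2m-2}\,\lrangle{YZ_{J'}}=(-1)^{k}\,\lrangle{YZ_{J'}},\qquad J':=\{1\}\cup I\cup\{n\}.
\]
Now applying the preceding analysis to $J'$---where $1\in J'$ contributes $+1$ per $\kappa\in K$, the $r-1$ cherries contribute evenly, and $n\in J'$ contributes $0$ since $\kappa<n$---yields $\epsilon(J',K)\equiv k\pmod 2$, so $\lrangle{YZ_{J'}}$ itself has sign $(-1)^k$. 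The two factors of $(-1)^k$ cancel and $\lrangle{YZ_I Z_n \hat{Z}_1}>0$.

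The main obstacle is not conceptual but signed bookkeeping: one must simultaneously control the Cauchy--Binet sign $\epsilon(I,K)$, the sign incurred by restoring sorted order in the twistor bracket, and the twist sign $(-1)^{k+m-1}$ hidden inside $\hat{Z}_1$, all without off-by-one errors. Once these three contributions are tabulated carefully, every case collapses to the elementary parity observation that a cherry $\{i_j,i_j+1\}\subseteq I$ is always seen as a block of even size by any $\kappa\notin I$.
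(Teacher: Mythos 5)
Your proof is correct and is essentially the same argument as the paper's: the paper expands $\langle CZ,Z_{i_1},\dots,Z_{i_m}\rangle$ via the identity \eqref{eq:expand} (your Cauchy--Binet expansion, with your $K$ playing the role of the paper's $J$) and then appeals to ``considering how many swaps are necessary to put $\{j_1,\dots,j_k,i_1,\dots,i_m\}$ in order.'' You have simply carried out that sign-count in full, and your cherry observation (that a pair $\{i_j,i_j+1\}\subseteq I$ is seen as an even-size block by every $\kappa\notin I$) together with the $(-1)^{k+m-1}$ from $\hat{Z}_1$ and the $(-1)^{m-1}$ from re-sorting is exactly the bookkeeping the paper leaves to the reader.
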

\begin{proof}
	The lemma follows from the fact that $Z\in \Mat_{n,k+m}^{>0}$ and
	 \begin{equation}\label{eq:expand}
                \langle CZ, Z_{i_1}, \dots, Z_{i_m} \rangle =
                \sum_{J=\{j_1<\dots<j_k\} \in {[n] \choose k}} p_J(C) \langle Z_{j_1}, \dots, Z_{j_k}, Z_{i_1},\dots, Z_{i_m}\rangle,
        \end{equation}
	 by considering how many swaps are necessary
	to put $\{j_1,\dots, j_k, i_1,\dots, i_m\}$ in order.
\end{proof}

\begin{proposition}
		[{\cite[Section 5.4]{ATT}
and \cite[Corollary 3.21]{karpwilliams}}]
	\label{conj:signflip}
Fix $Z\in \Mat^{>0}_{n,k+m}$, and define
  \begin{align*}
	  \mathcal{F}^\circ_{n,k,m}(Z)&:=  
	 \{Y\in Gr_{k,k+m} \ \vert \
	  \text{ the conclusion of \cref{lem:boundary} holds, and }  \\        
		&\var({\langle Y Z_1 \dots Z_{m-1} Z_m\rangle,
	  \langle Y Z_1 \dots Z_{m-1} Z_{m+1} \rangle,  \dots , 
		\langle Y Z_1 \dots Z_{m-1} Z_n\rangle})=k.\}
        \end{align*}
 Then 
$\mathcal{A}_{n,k,m}(Z) \subseteq \overline{\mathcal{F}^\circ_{n,k,m}(Z)}.$
\end{proposition}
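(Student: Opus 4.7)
The plan is to show that every $Y = \tilde Z(C)$ with $C \in \Gr_{k,n}^{>0}$ lies in $\mathcal{F}^\circ_{n,k,m}(Z)$; since $\Gr_{k,n}^{>0}$ is dense in $\Gr_{k,n}^{\geq 0}$ and $\tilde Z$ is continuous, this will yield $\AA(Z) \subseteq \overline{\mathcal{F}^\circ_{n,k,m}(Z)}$. The strict twistor inequalities listed in the definition of $\mathcal{F}^\circ_{n,k,m}(Z)$ hold for such $Y$ by \cref{lem:boundary} itself, so the only substantive point is to verify the sign-flip condition
$$\var\bigl(\lrangle{YZ_1\cdots Z_{m-1} Z_m},\,\lrangle{YZ_1\cdots Z_{m-1} Z_{m+1}},\,\dots,\,\lrangle{YZ_1\cdots Z_{m-1} Z_n}\bigr) = k.$$

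The natural first step is to transport the question to the B-amplituhedron. Let $W \in \Gr_{k+m,n}^{>0}$ be the column span of $Z$ and set $V := \rowspan(C) \in \Gr_{k,n}^{>0}$. By \cref{prop:AB}, the subspace $X := V^\perp \cap W$ lies in $\B_{n,k,m}(W)$ and corresponds to $Y$, and by \cref{lem:coordinates}, $\lrangle{YZ_1\cdots Z_{m-1} Z_j} = p_{1,\dots,m-1,j}(X)$. Because $\lrangle{YZ_1\cdots Z_m}\neq 0$ (from \cref{lem:boundary}), I can choose a basis $x_1,\dots, x_m$ of $X$ in reduced row-echelon form whose first $m-1$ vectors have leading pivots in columns $1,\dots, m-1$, and whose last vector $x_m$ vanishes in those columns. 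Expanding the $m \times m$ minor along columns $1,\dots,m-1$ then shows that $p_{1,\dots,m-1,j}(X) = (x_m)_j$ for $j \geq m$ (and is zero for $j<m$), so the sign-flip count that I need to compute is simply $\var(x_m)$.

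Next I would apply \cref{gantmakher_krein} twice. Since $V \in \Gr_{k,n}^{>0}$, every nonzero element of $V^\perp$ has sign variation $\geq k$; because $x_m \in X \subseteq V^\perp$, this gives $\var(x_m) \geq k$. Since $W \in \Gr_{k+m,n}^{>0}$ and $x_m \in W$, every nonzero element of $W$ satisfies $\overline{\var} \leq k+m-1$, so $\overline{\var}(x_m) \leq k+m-1$. Finally, the $m-1$ leading zeros of $x_m$ together with the nonzero entry $(x_m)_m$ allow one to sign those zeros in strict alternation ending opposite in sign to $(x_m)_m$, producing $m-1$ extra sign changes; this gives $\overline{\var}(x_m) \geq \var(x_m) + (m-1)$. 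Combining the two bounds yields $\var(x_m) \leq k$, hence $\var(x_m) = k$ as required.

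The main obstacle is the correct reformulation: once one sees that the relevant sequence of twistor coordinates forms the entries of a single vector $x_m \in X \subseteq V^\perp \cap W$, the two halves of the Gantmakher--Krein theorem applied to $V$ and to $W$ squeeze $\var(x_m)$ down to exactly $k$. The density of $\Gr_{k,n}^{>0}$ in $\Gr_{k,n}^{\geq 0}$ then promotes the inclusion $\tilde Z(\Gr_{k,n}^{>0}) \subseteq \mathcal{F}^\circ_{n,k,m}(Z)$ to the desired containment $\AA(Z) \subseteq \overline{\mathcal{F}^\circ_{n,k,m}(Z)}$.
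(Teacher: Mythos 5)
Your proof is correct. Passing to the B-amplituhedron, identifying the sequence of twistor coordinates $\lrangle{YZ_1\cdots Z_{m-1}Z_j}$ with the entries $(x_m)_j$ of a single vector $x_m \in X = V^\perp \cap W$ (valid because $\lrangle{YZ_1\cdots Z_m}\neq 0$ lets you put $X$ in reduced row-echelon form with pivots in columns $1,\dots,m$), and then squeezing $\var(x_m)$ between $k$ (from $V\in\Gr_{k,n}^{>0}$ applied to $V^\perp$) and $k$ (from $W\in\Gr_{k+m,n}^{>0}$ together with the $m-1$ forced leading zeros) is exactly the mechanism behind the cited Corollary 3.21 of Karp--Williams; the density step promoting $\tilde Z(\Gr_{k,n}^{>0})\subseteq \mathcal{F}^\circ_{n,k,m}(Z)$ to the closure statement is also standard and sound. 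This matches the intended route, so there is nothing to flag.
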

It is expected that the inequality above is an equality for 
$m=4$, see \cite[(5.14)]{ATT}.


Given the evident importance of signs of coordinates, and 
taking inspiration from the 
$m=1$ 
example,
we define the \emph{sign stratification}
for 
the amplituhedron; this stratification is closely 
related to the \emph{oriented matroid stratification} of the Grassmannian.

\begin{definition}\label{def:chamber}
Let $\sigma= (\sigma_{I})\in \{0, +,-\}^{n \choose m}$ be a nonzero sign vector with coordinates indexed by subsets
	$I \in {[n] \choose m}$.  We consider $\sigma$
         \emph{modulo multiplication by
        $\pm 1$}  (since Pl\"ucker and twistor coordinates are
	coordinates in projective space).
        Set 
	\begin{align*}
\mathcal{A}^{\sigma}_{n,k,m}(Z)&:=\{Y\in \mathcal{A}_{n,k,m}(Z) \ \vert \
	\sign \lrangle{Y Z_{I}} = \sigma_{I}
		\text{ for all }I\} \quad \text{ and }\\
	\mathcal{B}^{\sigma}_{n,k,m}(W)&:=\{X\in \mathcal{B}_{n,k,m}(W) \ \vert \
		\sign (p_{I}(X))  = \sigma_{I}
		\text{ for all }I\}.
	\end{align*}
  We call
	$\mathcal{A}^{\sigma}_{n,k,m}(Z)$ (respectively,
	$\mathcal{B}^{\sigma}_{n,k,m}(W)$) an \emph{(amplituhedron) sign stratum.}
	Clearly \begin{equation*}
		\mathcal{A}_{n,k,m}(Z) = \sqcup_{\sigma} \mathcal{A}^{\sigma}_{n,k,m}(Z)
		\quad \text{ and } \quad 
	\mathcal{B}_{n,k,m}(W) = \sqcup_{\sigma} \mathcal{B}^{\sigma}_{n,k,m}(W).
	\end{equation*}
        If $\sigma\in \{+,-\}^{n \choose m}$,
        we call
		$\mathcal{A}^{\sigma}_{n,k,m}(Z)$ and $\mathcal{B}^{\sigma}_{n,k,m}(W)$  open
        \emph{(amplituhedron) chambers}.
\end{definition}

For arbitrary $\sigma$, 
$\mathcal{A}^{\sigma}_{n,k,m}(Z)$ (or $\mathcal{B}^{\sigma}_{n,k,m}(W)$)
may be empty.  We 
call $\mathcal{A}^{\sigma}_{n,k,m}$ 
 \emph{realizable} if there is 
some $Z$ for which 
$\mathcal{A}^{\sigma}_{n,k,m}(Z)$ 
is nonempty. 
It is an open problem to classify
the realizable chambers/strata of the amplituhedron for $m>2$.
When $m=1$, 
the realizable strata 
are precisely the $\mathcal{A}^{\sigma}_{n,k,1}(Z)$ with
$\overline{\var}(\sigma)=k$ (cf. 
		\eqref{a:flip} and 
\cite{karpwilliams}). 
When $m=2$, we will 
show in 
 \cref{sec:T} that 
the realizable chambers
 of $\mathcal{A}_{n,k,2}$ 
 are counted by 
the \emph{Eulerian numbers}. 
This is  related to the fact that
the volume of  $\Delta_{k+1,n}$ is the Eulerian number.

\section{The amplituhedron map, positroid tilings, and plane partitions}
\label{sec:amp}

In this section we  begin our discussion of positroid tilings 
of the amplituhedron, cf. \cref{def:tri} with $\phi = \tilde{Z}$.
These have also beeen called \emph{(positroid) triangulations}.

\begin{definition}
        Choose $Z\in \Mat_{n,k+m}^{>0}$.
Given a positroid cell $S_{\pi}$ of
$\Gr_{k,n}^{\ge 0}$, we let
$\gto{\pi} := \tilde{Z}(S_{\pi})$ and
$\gt{\pi} := \overline{
        \tilde{Z}(S_{\pi})} = \tilde{Z}(\overline{S_{\pi}})$,
        and we refer to
	$\gt{\pi}$ 
	and 
$\gto{\pi}$ 
	as 
\emph{Grasstopes} and 
	\emph{open Grasstopes}, 
	respectively.
	As in \cref{def:tri}, we call
        $\gt{\pi}$  and $\gto{\pi}$
	a \emph{(positroid) tile}  and an \emph{open (positroid) tile}
        for $\mathcal{A}_{n,k,m}(Z)$
        if $\dim(S_\pi) =km$ and
         $\tilde{Z}$ is injective on $S_\pi$.
	 Since positroid cells are also indexed by plabic graphs,
	 we will also use $Z_G$ and $Z^{\circ}_G$ to denote the Grasstopes
	 associated to the positroid cell $S_G$.
\end{definition}
 
 Images of positroid cells under the map $\tilde{Z}$
have been studied since \cite{arkani-hamed_trnka}, where
the authors
conjectured that the images
of certain \emph{BCFW} collections of $4k$-dimensional  cells in
$\Gr_{k,n}^{\geq 0}$ give a ``triangulation'' (positroid tiling) of the amplituhedron
$\mathcal{A}_{n,k,4}(Z)$.   This conjecture was recently proved in 
a beautiful paper of 
Even-Zohar--Lakrec--Tessler \cite{ELT}, using the BCFW collection of cells
studied in \cite{karp:2017ouj}.


The positroid tiles for $\mathcal{A}_{n,k,m}(Z)$ 
were classified for $m=1$ in \cite[Theorem 8.10]{karpwilliams}.
For $m=2$ and $k=1$ 
(cf. \cref{ex:polytope}),
 the amplituhedron $\mathcal{A}_{n,1,2}(Z)$ 
is a convex $n$-gon in $\PP^2$. The positroid tiles are exactly the
  triangles on vertices $Z_1,\dots,Z_n$ of
 the polygon, and positroid tilings of $\mathcal{A}_{n,1,2}(Z)$
 are just ordinary triangulations of a polygon.
More generally, the positroid tiles have been classified for $m=2$ 
 in \cite[Theorem 4.25]{PSBW}, as we now describe.

\subsection{Classification of positroid tiles for the amplituhedron map when $m=2$}

When $m=2$,  positroid tiles 
are in bijection with \emph{bicolored subdivisions} of an $n$-gon. 

\begin{definition} 
\label{def:param}
Let $\mathbf{P}_n$ be a convex $n$-gon with boundary vertices labeled 
from $1$ to $n$ in clockwise order.
A \emph{bicolored triangulation} $\mathcal{T}$ of $\mathbf{P}_n$
is a triangulation whose edges connect vertices of $\mathbf{P}_n$
and whose triangles are colored black or white.
If $\mathcal{T}$ 
has exactly $k$ black triangles, we say it has
\emph{type $(k,n)$}.
Two bicolored triangulations
$\T$ and $\T'$  are
\emph{equivalent}
if the union of the black triangles is the same for both of them.
 If we erase the diagonals separating 
pairs of like-colored triangles in $\T$
	sharing an edge, we obtain
	a \emph{bicolored subdivision} 
        $\overline{\T}$
	of $\mathbf{P}_n$ into black and  white 
        polygons,   which represents the equivalence class of $\T$.


Given 
$\T$ as above, we build a labeled bipartite graph $\hatG(\T)$ by
placing black boundary vertices
        at the  vertices of the $n$-gon,
        and placing a trivalent white vertex in the middle of
        each black triangle, connecting it to the three vertices of the
        triangle.
See \cref{fig:Kasteleyn}.  
We can think of $\hatG(\T)$ as a \emph{plabic graph} if
we enclose it
in a slightly larger disk and
add $n$ edges connecting each black vertex  to the boundary of the disk.
	See \cref{fig:Kasteleyn}.  
\end{definition}

Note that if $\T$ and $\T'$ are equivalent, $\hatG(\T)$ and $\hatG(\T')$
are move-equivalent.
Bicolored triangulations and subdivisions 
are special cases of \emph{plabic tilings} \cite{OPS}.

\begin{figure}[h]
	\centering
\resizebox{1.4in}{!}{
\begin{tikzpicture}
	\node[draw, minimum size=3.3cm, regular polygon, regular polygon sides=9] (a) {};
	 \filldraw[fill=gray!30!white](a.corner 1)--(a.corner 2)--(a.corner 3)--(a.corner 4)--(a.corner 1);
	 \filldraw[fill=gray!30!white](a.corner 4)--(a.corner 6)--(a.corner 7)--(a.corner 8)--(a.corner 9)--(a.corner 4);
	\foreach \x in {1,2,...,9}
	\fill (a.corner \x) circle[radius=2pt];
	\draw[black,thin](a.corner 1)--(a.corner 4);
	\draw[black,thin](a.corner 9)--(a.corner 4);
	\draw[black,thin](a.corner 6)--(a.corner 4);
	\draw[black,thin](a.corner 2)--(a.corner 4);
	\draw[black,thin](a.corner 7)--(a.corner 4);
	\draw[black,thin](a.corner 8)--(a.corner 4);
	 \node[shift=(a.corner 1), anchor=south] {$1$};
	 \node[shift=(a.corner 2),anchor=south] {$9$};
	 \node[shift=(a.corner 3),anchor=east] {$8$};
	 \node[shift=(a.corner 4),anchor=east] {$7$};
	 \node[shift=(a.corner 5),anchor=north] {$6$};
	 \node[shift=(a.corner 6),anchor=north] {$5$};
	 \node[shift=(a.corner 7),anchor=west] {$4$};
	 \node[shift=(a.corner 8),anchor=west] {$3$};
	 \node[shift=(a.corner 9),anchor=west] {$2$};
\end{tikzpicture}
	}
\resizebox{1.4in}{!}{
\begin{tikzpicture}
	\node[draw, minimum size=3.3cm, regular polygon, regular polygon sides=9] (a) {};
	 \filldraw[fill=gray!30!white](a.corner 1)--(a.corner 2)--(a.corner 3)--(a.corner 4)--(a.corner 1);
	 \filldraw[fill=gray!30!white](a.corner 4)--(a.corner 6)--(a.corner 7)--(a.corner 8)--(a.corner 9)--(a.corner 4);
	\foreach \x in {1,2,...,9}
	\fill (a.corner \x) circle[radius=2pt];
	\draw[black,thin](a.corner 1)--(a.corner 4);
	\draw[black,thin](a.corner 9)--(a.corner 4);
	\draw[black,thin](a.corner 6)--(a.corner 4);
	 \node[shift=(a.corner 1), anchor=south] {$1$};
	 \node[shift=(a.corner 2),anchor=south] {$9$};
	 \node[shift=(a.corner 3),anchor=east] {$8$};
	 \node[shift=(a.corner 4),anchor=east] {$7$};
	 \node[shift=(a.corner 5),anchor=north] {$6$};
	 \node[shift=(a.corner 6),anchor=north] {$5$};
	 \node[shift=(a.corner 7),anchor=west] {$4$};
	 \node[shift=(a.corner 8),anchor=west] {$3$};
	 \node[shift=(a.corner 9),anchor=west] {$2$};
\end{tikzpicture}
}
\resizebox{1.4in}{!}{
\begin{tikzpicture}
	\node[draw, minimum size=3.3cm, regular polygon, regular polygon sides=9] (a) {};
	 \filldraw[fill=gray!30!white](a.corner 1)--(a.corner 2)--(a.corner 3)--(a.corner 4)--(a.corner 1);
	 \filldraw[fill=gray!30!white](a.corner 4)--(a.corner 6)--(a.corner 7)--(a.corner 8)--(a.corner 9)--(a.corner 4);
	\foreach \x in {1,2,...,9}
	\fill (a.corner \x) circle[radius=2pt];
	\draw[black,thin](a.corner 1)--(a.corner 4);
	\draw[black,thin](a.corner 9)--(a.corner 4);
	\draw[black,thin](a.corner 6)--(a.corner 4);
	\draw[black,thin](a.corner 2)--(a.corner 4);
	\draw[black,thin](a.corner 7)--(a.corner 4);
	\draw[black,thin](a.corner 8)--(a.corner 4);
	 \node[shift=(a.corner 1), anchor=south] {$1$};
	 \node[shift=(a.corner 2),anchor=south] {$9$};
	 \node[shift=(a.corner 3),anchor=east] {$8$};
	 \node[shift=(a.corner 4),anchor=east] {$7$};
	 \node[shift=(a.corner 5),anchor=north] {$6$};
	 \node[shift=(a.corner 6),anchor=north] {$5$};
	 \node[shift=(a.corner 7),anchor=west] {$4$};
	 \node[shift=(a.corner 8),anchor=west] {$3$};
	 \node[shift=(a.corner 9),anchor=west] {$2$};
        \draw[blue,thick](-.8,.8)--(a.corner 1);
        \draw[blue,thick](-.8,.8)--(a.corner 2);
        \draw[blue,thick](-.8,.8)--(a.corner 4);
        \draw[blue,thick](-1.4,.3)--(a.corner 2);
        \draw[blue,thick](-1.4,.3)--(a.corner 3);
        \draw[blue,thick](-1.4,.3)--(a.corner 4);
        \draw[blue,thick](0.5,.3)--(a.corner 4);
        \draw[blue,thick](0.5,.3)--(a.corner 9);
        \draw[blue,thick](0.5,.3)--(a.corner 8);
        \draw[blue,thick](0.3,-1.1)--(a.corner 4);
        \draw[blue,thick](0.3,-1.1)--(a.corner 6);
        \draw[blue,thick](0.3,-1.1)--(a.corner 7);
        \draw[blue,thick](0.8,-.4)--(a.corner 4);
        \draw[blue,thick](0.8,-.4)--(a.corner 7);
        \draw[blue,thick](0.8,-.4)--(a.corner 8);
	\filldraw[color=blue,fill=white] (-.8,.8) circle (2pt);
	\filldraw[color=blue,fill=white] (-1.4,.3) circle (2pt);
	\filldraw[color=blue,fill=white] (0.5,.3) circle (2pt);
	\filldraw[color=blue,fill=white] (0.3,-1.1) circle (2pt);
	\filldraw[color=blue,fill=white] (0.8,-.4) circle (2pt);
\end{tikzpicture}
}
	\caption{A bicolored triangulation 
	$\T$; the corresponding 
	bicolored subdivision  $\overline{\T}$;  the 
	graph $\hatG(\T)$.}
        \label{fig:Kasteleyn}
\end{figure}
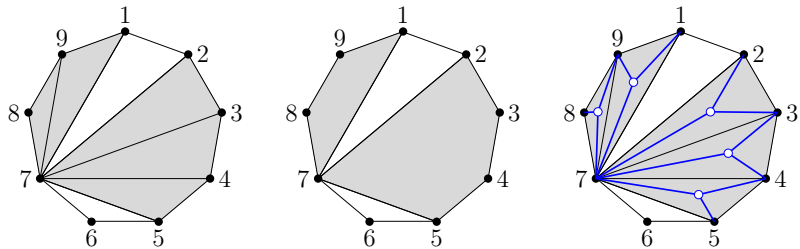


The following statement refines a  conjecture from 
 \cite{Lukowski:2019sxw}. 

\begin{theorem}
	[{\cite[Theorem 4.25]{PSBW}}]
	\label{thm:allGTs} 
	Fix $k<n$ and $Z \in \Mat^{>0}_{n, k+2}$. Then $\tilde{Z}$ is injective on the $2k$-dimensional cell $S_\M$ if and only if  $S_\M = S_{\hatG(\T)}$ for some bicolored triangulation $\T$ of type $(k,n)$.
That is, the positroid tiles for $\A_{n, k, 2}$ are exactly
        the Grasstopes $\gt{\hatG(\T)}$.
\end{theorem}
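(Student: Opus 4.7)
The statement classifies positroid tiles of $\A_{n,k,2}(Z)$: since $\dim \A_{n,k,2}(Z) = 2k$, a tile must come from a $2k$-dimensional positroid cell $S_\M$ on which $\tilde Z$ is injective. The plan is to treat sufficiency and necessity separately.

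For \emph{sufficiency}, fix a bicolored triangulation $\T$ of type $(k,n)$. First I would confirm $\dim S_{\hatG(\T)} = 2k$ via the edge-weight parametrization of \cref{thm:Kasteleyn}: the graph has $k$ trivalent white internal vertices and $3k$ internal edges, yielding $3k - k = 2k$ parameters after gauge-fixing one rescaling per internal vertex. To prove injectivity of $\tilde Z|_{S_{\hatG(\T)}}$, given $Y=CZ$ I would recover $C$ from the twistor coordinates of $Y$ using the expansion
\[
\lrangle{CZ\,Z_i\,Z_j} \;=\; \sum_{J \in \binom{[n]}{k}} p_J(C)\,\lrangle{Z_J\,Z_i\,Z_j}.
\]
The Plücker coordinates $p_J(C)$ are monomials in the edge weights via almost-perfect matchings (\cref{prop:matching}), and the structure of bases of $\M(\hatG(\T))$---each basis corresponds to a system of distinct corner-representatives for the $k$ black triangles---should allow one to isolate, for each black triangle with corners $\{a,b,c\}$, a collection of twistors $\lrangle{YZ_a Z_*}$ (with $*$ ranging over corners of triangles adjacent to that triangle) whose positivity and algebraic relations determine the three edge weights incident to the corresponding white vertex, modulo their shared gauge.

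For \emph{necessity}, suppose $S_\M$ is $2k$-dimensional and $\tilde Z|_{S_\M}$ is injective. Representing $S_\M$ by a reduced bipartite plabic graph $G$ (after moves (M2)--(M3) of \cref{def:move}), the dimension constraint forces the number of internal vertices to equal $k$ and each internal vertex to be trivalent. I would then argue that if any internal face of $G$ had degree $\ge 4$, the square move (M1) would generate a one-parameter family of weight functions producing the same image $CZ$, contradicting injectivity. Ruling out such faces forces the combinatorial type of $G$ to be $\hatG(\T)$ for some bicolored triangulation $\T$ of $\mathbf{P}_n$, and the bijective correspondence between such $G$ and $\T$ is essentially \cref{def:param}.

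I expect the main obstacle to be the necessity direction, which requires ruling out every alternative combinatorial type of $2k$-dimensional positroid cell. Rather than a direct case analysis, the most elegant route is via T-duality (\cref{sec:T}): positroid tiles of $\Delta_{k+1,n}$ are classified in \cref{prop:tree} as tree positroid polytopes, and T-duality bijects these trees with bicolored triangulations of the $n$-gon, transporting the classification to the amplituhedron side. A secondary subtlety is verifying that the inverse construction in the sufficiency step behaves well across the whole open cell (not merely at generic points), and handling the twisted cyclic symmetry (\cref{rem:twisted}) when cyclically adjacent indices arise, which likely requires invoking the sign-variation characterization of \cref{thm:main1}.
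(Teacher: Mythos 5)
Your proposal identifies the right two directions, but both of your concrete routes have gaps. In the necessity direction, the square-move argument is not correct: the square move (M1) is a move-equivalence that transforms the plabic graph and its edge weights while preserving the same point $C \in \Grk$, so it cannot produce two distinct $C_1 \neq C_2$ in $S_\M$ with $\tilde{Z}(C_1) = \tilde{Z}(C_2)$. Moreover, the criterion ``no internal face of degree $\geq 4$'' is not even satisfied by the valid graphs $\hatG(\T)$: whenever two black triangles of $\T$ share an edge, $\hatG(\T)$ contains a quadrilateral face between the two white vertices, and white regions of $\overline{\T}$ give rise to even larger faces. So this characterization both misidentifies cells as bad and fails to actually demonstrate non-injectivity.

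The T-duality route you describe as ``more elegant'' is circular as stated. You propose to transport \cref{prop:tree} (the classification of positroid tiles of $\Delta_{k+1,n}$) to the amplituhedron via T-duality. But T-duality is, \emph{a priori}, just a bijection of cell indices (\cref{prop:posetiso}); the moment map $\mu$ and the amplituhedron map $\tilde{Z}$ are entirely different maps, and there is no automatic reason why injectivity of $\mu$ on $S_\pi$ should correspond to injectivity of $\tilde{Z}$ on $S_{\hat{\pi}}$ — nor why $(n-1)$-dimensional cells should go to $2k$-dimensional ones. The statement that T-duality exchanges positroid tiles of $\Delta_{k+1,n}$ and of $\A_{n,k,2}(Z)$ is precisely \cref{cor:GTsInBijection}, which in the paper sits downstream of \cref{thm:allGTs} and is established by combining \cref{prop:tree} with the present theorem. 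Invoking it here assumes what you are trying to prove. The actual proof in the cited reference is direct: it shows injectivity on $S_{\hatG(\T)}$ by explicitly inverting $\tilde{Z}$ using sign constraints on twistor coordinates (as you begin to sketch, but this needs to be carried out carefully), and it rules out all other $2k$-dimensional cells by a separate combinatorial/geometric argument rather than by the square move heuristic.
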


It is an open problem to classify positroid tiles of 
$\AA(Z)$ for $m>2$.

\subsection{Numerology of positroid tilings of the amplituhedron}

When $m=4$, each (conjectural) BCFW  tiling
of $\A_{n,k,4}(Z)$ has cardinality 
equal to the \emph{Narayana number}
$N_{n-3,k+1}=\frac{1}{n-3} \binom{n-3}{k+1} \binom{n-3}{k}.$
What should be the cardinality 
for $m\neq 4$?
The following table gives  data about  special cases studied
thus far.

\begin{center}
\setlength{\tabcolsep}{5pt}
\begin{tabular}{|c|c|c|}
\hline 
	{\bfseries special case} & {\bfseries cardinality of tiling of 
	$\mathcal{A}_{n,k,m}(Z)$} 
	& {\bfseries explanation} \\ \hline
	$m=0$\text{ or }$k=0$ \text{ or }$k+m=n$ & $1$\rule{0pt}{13pt}\rule[-7pt]{0pt}{0pt} & 
	$\mathcal{A}$ is a point or 
	$\mathcal{A}\cong\Gr_{k,n}^{\ge 0}$  
	\\ \hline
	$m=1$ & $\displaystyle\binom{n-1}{k}$\rule{0pt}{21pt}\rule[-14pt]{0pt}{0pt} & \cite{karpwilliams} \\ \hline
$m=2$ & $\displaystyle\binom{n-2}{k}$\rule{0pt}{21pt}\rule[-14pt]{0pt}{0pt} & \cite{ATT, BaoHe, PSBW} \\ \hline
$m=4$ & $\displaystyle\frac{1}{n-3}\binom{n-3}{k+1}\binom{n-3}{k}$\rule{0pt}{21pt}\rule[-14pt]{0pt}{0pt} &  \cite{arkani-hamed_trnka, ELT} \\ \hline
$k=1$, $m$ even & $\displaystyle  \binom{n-1- \frac{m}{2} }{ \frac{m}{2}}$\rule{0pt}{21pt}\rule[-14pt]{0pt}{0pt} & $\mathcal{A}\cong \text{cyclic polytope }C(n,m)$
	\\ \hline
\end{tabular}
\end{center}

As we will see later, the appearance of the number ${n-2 \choose k}$ in the $m=2$ row of the table 
is related to the appearance of the number ${n-2 \choose k}$ in 
\cref{cor:binomial}.

The special cases in the table led us to make
the following intriguing
conjecture.

\begin{conj}[{\cite[Conjecture 8.1]{karp:2017ouj}}]\label{conj:plane}
If $m$ is even, the cardinality of a  positroid tiling of the amplituhedron
$\mathcal{A}_{n,k,m}(Z)$ is 
$M(k, n-k-m,  \frac{m}{2} )$, where
$$M(a,b,c) := \prod_{i=1}^a\prod_{j=1}^b\prod_{k=1}^c\frac{i+j+k-1}{i+j+k-2}
$$
	is the number of \emph{plane partitions}  contained in an $a \times b \times c$ box.
\end{conj}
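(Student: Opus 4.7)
The plan is to decouple the conjecture into an \emph{invariance} statement (any two tilings of $\mathcal{A}_{n,k,m}(Z)$ have the same cardinality) and an \emph{evaluation} statement (one specific tiling has exactly $M(k,n-k-m,\tfrac{m}{2})$ tiles), and then attack each separately.

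\emph{Part 1: Invariance.} Here I would try to show that any two positroid tilings of $\mathcal{A}_{n,k,m}(Z)$ are connected by a sequence of local flips preserving the number of tiles, in the spirit of the flip graph mentioned after \cref{def:tri0}. For $m=2$, I would attempt to transport the question to the hypersimplex $\Delta_{k+1,n}$ via the T-duality foreshadowed in \cref{sec:T}, and then invoke \cref{thm:finest} and \cref{cor:binomial}, which already pin the cardinality at $\binom{n-2}{k}=M(k,n-k-2,1)$. For general even $m$, I would hope for an analog of the secondary polytope or Baues poset (perhaps in the spirit of \cite{BS, MathisMeroni} suggested in the paper) governing \emph{coherent} amplituhedron tilings, together with a proof that the resulting flip graph is connected; cardinality-invariance would then follow from the fact that a single flip preserves cardinality.

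\emph{Part 2: Evaluation.} I would try to construct one explicit tiling whose tiles are in explicit bijection with plane partitions in the $k\times(n-k-m)\times\tfrac{m}{2}$ box. For $m=4$ this is already done by \cite{ELT}: the BCFW collection has cardinality the Narayana number $N_{n-3,k+1}$, which equals $M(k,n-k-4,2)$, and the bijection with plane partitions passes through non-intersecting lattice paths and the Lindstr\"om--Gessel--Viennot lemma. For general even $m$, the strategy would be to build an $(n,k,m)$-recursion on positroid cells that mirrors the standard recursion for $M(a,b,c)$ obtained by peeling off a slice of the box, ideally realized geometrically as a slicing of $\mathcal{A}_{n,k,m}(Z)$ by a family of ``BCFW-like'' hyperplane sections whose residues reduce $(n,k,m)$ to $(n-1,k,m)$ and $(n-1,k-1,m)$. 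The $k=1$ row of the table is the base case: there $\mathcal{A}_{n,1,m}$ is a cyclic polytope and the count $\binom{n-1-m/2}{m/2}=M(1,n-1-m,m/2)$ is classical.

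\emph{Main obstacle.} Part 1 is the substantive bottleneck: even for $m=4$, flip-connectivity of BCFW-type tilings is itself open, and for general $m$ one does not yet have a canonical combinatorial model indexing all tilings. An alternative route, possibly more tractable, would be to express the cardinality directly as a topological/algebraic invariant of $\mathcal{A}_{n,k,m}(Z)$, for instance as an $h$-type invariant computed from the sign stratification of \cref{def:chamber}, or as the normalized volume of a putative ``fiber polytope'' for $\tilde Z$, thereby bypassing flip analysis entirely and reducing the conjecture to evaluating a single generating function against the MacMahon product. Combining such an invariant with Part 2 would complete the proof.
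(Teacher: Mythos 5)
This statement is presented in the paper as an open conjecture (\cref{conj:plane}, citing \cite[Conjecture~8.1]{karp:2017ouj}); the paper offers no proof, only supporting evidence in a table of special cases ($m=0,1,2,4$ and $k=1$). So there is nothing in the paper to compare your proposal against, and the real question is whether your sketch actually closes the gap. It does not: what you have written is a research program with the two hardest steps left open, and you say as much yourself.

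Concretely, Part~1 (invariance of cardinality across all tilings) is the crux and is unproven. Your proposed route through flip-connectivity has two independent holes: (i) it is not established that any two positroid tilings of $\mathcal{A}_{n,k,m}(Z)$ are connected by local flips, even for $m=4$; and (ii) even granting connectivity, flips of tilings in dimension $\ge 2$ generally \emph{change} cardinality (e.g.\ bistellar flips in polytopal triangulations, or the odd-$m$ cyclic polytope phenomenon the paper itself cites from \cite{rambau_97}), so one would also need a structural result explaining why the relevant flips for even $m$ are cardinality-preserving. Neither of your proposed alternatives --- an amplituhedron analogue of fiber polytopes/Baues theory \cite{BS, MathisMeroni}, or an $h$-type invariant read off the sign stratification of \cref{def:chamber} --- currently exists; they are exactly the kind of thing \cref{def:tri0}'s discussion flags as open questions. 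For Part~2, the $m=2$ and $m=4$ rows are fine (via \cref{cor:binomial} and \cite{ELT} respectively), but there is no known tiling, nor a geometric recursion on $(n,k,m)$, for general even $m\ge 6$, so the ``evaluation'' step is also incomplete beyond the known cases. In short, the decomposition into invariance plus evaluation is a sensible strategy and matches the spirit of what is proved for $m\le 2$ (and partially $m=4$), but as written it is a plan, not a proof, and the paper itself treats the statement as conjectural.
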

For odd $m$, we believe that 
the \emph{maximum} cardinality achieved by 
a positroid tiling of $\mathcal{A}_{n,k,m}(Z)$ is 
 $M(k, n-k-m, \lceil \frac{m}{2} \rceil)$.
This is consistent with the results of \cite{karpwilliams} for $m=1$, and the
fact that for odd $m$,
the number of top-dimensional simplices in a triangulation of the cyclic polytope $C(n,m)$ 
can lie anywhere between $\binom{n-1-\frac{m+1}{2}}{\frac{m-1}{2}}$ and 
	$\binom{n-\frac{m+1}{2}}{\frac{m+1}{2}}$ \cite[Corollary 1.2(ii)]{rambau_97}.  

\begin{remark}
Clearly $M(a,b,c)$ is symmetric in $a, b$ and $c$.  
\cref{conj:plane} thus suggests that for even $m$, there is a symmetry of 
(the positroid tilings of)	the amplituhedron 
$\mathcal{A}_{n,k,m}$ that allows one to exchange
	$k$, $n-k-m$, and $\frac{m}{2}$.  The 
	symmetry
	between $k$ and $n-k-m$ is called \emph{parity duality}
	and was subsequently verified in \cite{Galashin:2018fri}.
\end{remark}

\begin{remark}
	Besides counting plane partitions, 
	the numbers $M(a,b,c)$ have multiple combinatorial interpretations.
	For example, they also count collections of 
	$c$ noncrossing lattice paths inside an $a\times b$ rectangle; 
	rhombic tilings of a hexagon with side lengths $a, b, c, a, b,c$; and 
	perfect matchings of a certain honeycomb lattice.
	See \cite[Section 8.1]{karp:2017ouj}.
\end{remark}

\section{T-duality and positroid tilings of 
$\Delta_{k+1,n}$ and $\mathcal{A}_{n,k,2}$ }
\label{sec:T}

In this section we will fix $1 \leq k \leq n-2$, and explore a  mysterious
duality between
the hypersimplex $\Delta_{k+1,n}$ -- an $(n-1)$-dimensional polytope in $\R^n$ -- and 
the amplituhedron $\mathcal{A}_{n,k,2}(Z)$ -- a $2k$-dimensional (non-polytopal) subset
of $\Gr_{k,k+2}$ \cite{LPW, PSBW}.  This duality was first discovered in 
\cite{LPW},
after we observed that for small $k$ and $n$,
the $f$-vector of the 
positive tropical Grassmannian $\Trop^+ \Gr_{k+1,n}$ \cite{troppos} 
agrees with the numbers of 
positroid subdivisions of $\mathcal{A}_{n,k,2}(Z)$
\cite[Section 11]{LPW}.  By \cref{thm:trop}, the cones of $\Trop^+ \Gr_{k+1,n}$ parameterize the 
regular positroid subdivisions of $\Delta_{k+1,n}$,
 so this leads to the idea
that positroid 
subdivisions of $\Delta_{k+1,n}$ and $\mathcal{A}_{n,k,2}(Z)$ must be
related \cite{LPW}.

\begin{example}
Continuing \cref{ex:trop1}, there are two maximal cones of 
 $\Trop^+\Gr_{2,4}$, defined by the inequalities
$P_{13}+P_{24} = P_{14}+P_{23} < P_{12}+P_{34}$ and 
 $P_{13}+P_{24} = P_{12}+P_{34} < P_{14}+P_{23}.$
These two cones give rise to the two subdivisions of 
$\Delta_{2,4}$ shown in 
\cref{fig:T1}. 
These subdivisions are both positroid tilings for  
the moment map, as in 
\cref{def:tri} (and there are no other positroid tilings).
	\begin{figure}[h]
		\centering
	\begin{tikzpicture} 
	\draw (0,1.5) circle (.5cm);
	\node[draw=none, minimum size=1cm, regular polygon, regular polygon sides=4] 
		at (0,1.5) (s) {};
	\foreach \x in {1,2,...,4}
	\fill (s.corner \x) circle[radius=1.2pt];
	 \node[shift=(s.corner 1),anchor=west] {\tiny $2$};
	 \node[shift=(s.corner 2),anchor=east] {\tiny $1$};
	 \node[shift=(s.corner 3),anchor=east] {\tiny $4$};
	 \node[shift=(s.corner 4),anchor=west] {\tiny $3$};
        \draw[black,thick](0,1.5-.1)--(s.corner 3);
        \draw[black,thick](0,1.5-.1)--(s.corner 4);
        \draw[black,thick](0,1.5+.1)--(s.corner 1);
        \draw[black,thick](0,1.5+.1)--(s.corner 2);
        \draw[black,thick](0,1.5+.1)--(0,1.5-.1);
	\filldraw[color=black,fill=black] (0,1.5+.1) circle (1pt);
	\filldraw[color=black,fill=white] (0,1.5-.1) circle (1pt);
	\filldraw[color=black,fill=white] (-.15,1.5+.2) circle (1pt);
	\filldraw[color=black,fill=white] (.15,1.5+.2) circle (1pt);
	\draw (1,1.5) node {$\leadsto$};
		\draw (-1.3,1.5) node {\tiny $(3,1,4,2)$};
		\draw (-1.3,0) node {\tiny $(2,4,1,3)$};
		\draw (0,0) circle (.5cm);
	\node[draw=none, minimum size=1cm, regular polygon, regular polygon sides=4] (s) {};
	\foreach \x in {1,2,...,4}
	\fill (s.corner \x) circle[radius=1.2pt];
	 \node[shift=(s.corner 1),anchor=west] {\tiny $2$};
	 \node[shift=(s.corner 2),anchor=east] {\tiny $1$};
	 \node[shift=(s.corner 3),anchor=east] {\tiny $4$};
	 \node[shift=(s.corner 4),anchor=west] {\tiny $3$};
        \draw[black,thick](0,-.1)--(s.corner 3);
        \draw[black,thick](0,-.1)--(s.corner 4);
        \draw[black,thick](0,.1)--(s.corner 1);
        \draw[black,thick](0,.1)--(s.corner 2);
        \draw[black,thick](0,.1)--(0,-.1);
	\filldraw[color=black,fill=white] (0,.1) circle (1pt);
	\filldraw[color=black,fill=black] (0,-.1) circle (1pt);
	\filldraw[color=black,fill=white] (-.15,-.2) circle (1pt);
	\filldraw[color=black,fill=white] (.15,-.2) circle (1pt);
	\draw (1,0) node {$\leadsto$};
\end{tikzpicture} \hspace{-.9cm}
	\begin{tikzpicture}
     \tkzDefPoint(0,0){e14}
     \tkzDefPoint(1,0){e13}
     \tkzDefPoint(.5,.4){e24}
     \tkzDefPoint(1.5,.4){e23}
     \tkzDefPoint(.7,1.4){e12}
     \tkzDefPoint(.7,-1){e34}
	 \filldraw[fill=gray!30!white](e14)--(e24)--(e23)--(e13)--(e14);
\tkzDrawPolygon(e24, e14,e13,e23,e24,e12,e14,e13,e12,e23,e34,e14,e13,e34,e23)
		\tkzDrawPolygon(e24,e14,e12,e24,e23,e12,e13,e14,e13,e23);
		\node[shift=(e24), anchor=north] {\tiny $e_{24}$};
		\node[shift=(e14), anchor= east] {\tiny $e_{14}$};
		\node[shift=(e23), anchor=west] {\tiny $e_{23}$};
		\node[shift=(e13), anchor=north east] {\tiny $e_{13}$};
		\node[shift=(e12), anchor=east] {\tiny $e_{12}$};
		\node[shift=(e34), anchor=east] {\tiny $e_{34}$};
	\filldraw[color=black,fill=black] (e24) circle (1pt);
	\filldraw[color=black,fill=black] (e14) circle (1pt);
	\filldraw[color=black,fill=black] (e23) circle (1pt);
	\filldraw[color=black,fill=black] (e13) circle (1pt);
	\filldraw[color=black,fill=black] (e12) circle (1pt);
	\filldraw[color=black,fill=black] (e34) circle (1pt);
\end{tikzpicture} \quad \quad \quad
	\begin{tikzpicture}
		\draw (-1.3,0) circle (.5cm);
	\node[draw=none, minimum size=1cm, regular polygon, regular polygon sides=4] 
		at (-1.3,0) (s) {};
	\foreach \x in {1,2,...,4}
	\fill (s.corner \x) circle[radius=1.2pt];
	 \node[shift=(s.corner 1),anchor=west] {\tiny $2$};
	 \node[shift=(s.corner 2),anchor=east] {\tiny $1$};
	 \node[shift=(s.corner 3),anchor=east] {\tiny $4$};
	 \node[shift=(s.corner 4),anchor=west] {\tiny $3$};
        \draw[black,thick](-.1-1.3,0)--(s.corner 3);
        \draw[black,thick](.1-1.3,0)--(s.corner 4);
        \draw[black,thick](.1-1.3,0)--(s.corner 1);
        \draw[black,thick](-.1-1.3,0)--(s.corner 2);
        \draw[black,thick](.1-1.3,0)--(-.1-1.3,0);
	\filldraw[color=black,fill=white] (.1-1.3,0) circle (1pt);
	\filldraw[color=black,fill=black] (-.1-1.3,0) circle (1pt);
	\filldraw[color=black,fill=white] (-.2-1.3,-.15) circle (1pt);
	\filldraw[color=black,fill=white] (-.2-1.3, .15) circle (1pt);
	\draw (-.5,0) node {$\leadsto$};
		\draw (-1.3,-.8) node {\tiny $(4,3,1,2)$};
		\draw (2.5,-.8) node {\tiny $(3,4,2,1)$};
     \tkzDefPoint(0,0){e14}
     \tkzDefPoint(1,0){e13}
     \tkzDefPoint(.5,.4){e24}
     \tkzDefPoint(1.5,.4){e23}
     \tkzDefPoint(.7,1.4){e12}
     \tkzDefPoint(.7,-1){e34}
	 \filldraw[fill=gray!30!white](e12)--(e24)--(e34)--(e13)--(e12);
\tkzDrawPolygon(e24, e14,e13,e23,e24,e12,e14,e13,e12,e23,e34,e14,e13,e34,e23)
		\tkzDrawPolygon(e24,e14,e12,e24,e23,e12,e13,e14,e13,e23);
		\node[shift=(e24), anchor=north] {\tiny $e_{24}$};
		\node[shift=(e14), anchor= north ] {\tiny $e_{14}$};
		\node[shift=(e23), anchor=south west] {\tiny $e_{23}$};
		\node[shift=(e13), anchor=north east] {\tiny $e_{13}$};
		\node[shift=(e12), anchor=east] {\tiny $e_{12}$};
		\node[shift=(e34), anchor=east] {\tiny $e_{34}$};
	\filldraw[color=black,fill=black] (e24) circle (1pt);
	\filldraw[color=black,fill=black] (e14) circle (1pt);
	\filldraw[color=black,fill=black] (e23) circle (1pt);
	\filldraw[color=black,fill=black] (e13) circle (1pt);
	\filldraw[color=black,fill=black] (e12) circle (1pt);
	\filldraw[color=black,fill=black] (e34) circle (1pt);
		\draw (2.5,0) circle (.5cm);
	\node[draw=none, minimum size=1cm, regular polygon, regular polygon sides=4] 
		at (2.5,0) (s) {};
	\foreach \x in {1,2,...,4}
	\fill (s.corner \x) circle[radius=1.2pt];
	 \node[shift=(s.corner 1),anchor=west] {\tiny $2$};
	 \node[shift=(s.corner 2),anchor=east] {\tiny $1$};
	 \node[shift=(s.corner 3),anchor=east] {\tiny $4$};
	 \node[shift=(s.corner 4),anchor=west] {\tiny $3$};
        \draw[black,thick](-.1+2.5,0)--(s.corner 3);
        \draw[black,thick](.1+2.5,0)--(s.corner 4);
        \draw[black,thick](.1+2.5,0)--(s.corner 1);
        \draw[black,thick](-.1+2.5,0)--(s.corner 2);
        \draw[black,thick](.1+2.5,0)--(-.1+2.5,0);
	\filldraw[color=black,fill=black] (.1+2.5,0) circle (1pt);
	\filldraw[color=black,fill=white] (-.1+2.5,0) circle (1pt);
	\filldraw[color=black,fill=white] (.2+2.5,-.15) circle (1pt);
	\filldraw[color=black,fill=white] (.2+2.5, .15) circle (1pt);
		\draw (2.5-.7,0) node {\rotatebox{180}{$\leadsto$}};
\end{tikzpicture}
	\caption{The two positroid tilings of 
	$\Delta_{2,4}$ for the moment map.
	The plabic graphs specify the positroid cells
	whose images are the positroid tiles (positroid polytopes).}
        \label{fig:T1}
\end{figure}
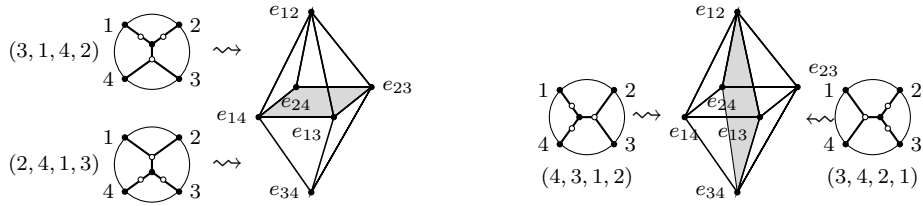

	Meanwhile, the amplituhedron $\A_{4,1,2}(Z)$ is a 
	quadrilateral, which has precisely two positroid  tilings
	for the amplituhedron map, as shown in 
	\cref{fig:T2}.
\end{example}

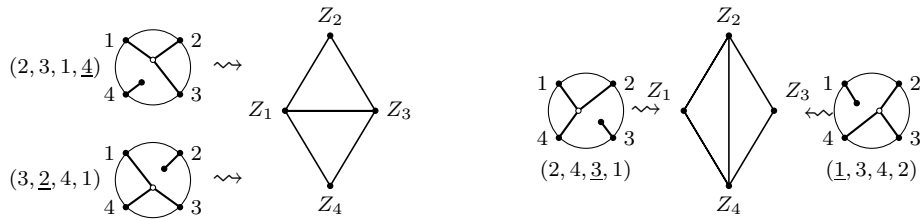
\begin{figure}[h]
	\centering
	\begin{tikzpicture} 
	\draw (0,1.5) circle (.5cm);
	\node[draw=none, minimum size=1cm, regular polygon, regular polygon sides=4] 
		at (0,1.5) (s) {};
	\foreach \x in {1,2,...,4}
	\fill (s.corner \x) circle[radius=1.2pt];
	 \node[shift=(s.corner 1),anchor=west] {\tiny $2$};
	 \node[shift=(s.corner 2),anchor=east] {\tiny $1$};
	 \node[shift=(s.corner 3),anchor=east] {\tiny $4$};
	 \node[shift=(s.corner 4),anchor=west] {\tiny $3$};
        \draw[black,thick](-.15,1.5-.2)--(s.corner 3);
        \draw[black,thick](0,1.5+.1)--(s.corner 4);
        \draw[black,thick](0,1.5+.1)--(s.corner 1);
        \draw[black,thick](0,1.5+.1)--(s.corner 2);
	\filldraw[color=black,fill=white] (0,1.5+.1) circle (1pt);
	\filldraw[color=black,fill=black] (-.15,1.5-.2) circle (1pt);
	\draw (1,1.5) node {$\leadsto$};
		\draw (-1.3,1.5) node {\tiny $(2,3,1,\underline{4})$};
		\draw (-1.3,0) node {\tiny $(3,\underline{2},4,1)$};
		\draw (0,0) circle (.5cm);
	\node[draw=none, minimum size=1cm, regular polygon, regular polygon sides=4] (s) {};
	\foreach \x in {1,2,...,4}
	\fill (s.corner \x) circle[radius=1.2pt];
	 \node[shift=(s.corner 1),anchor=west] {\tiny $2$};
	 \node[shift=(s.corner 2),anchor=east] {\tiny $1$};
	 \node[shift=(s.corner 3),anchor=east] {\tiny $4$};
	 \node[shift=(s.corner 4),anchor=west] {\tiny $3$};
        \draw[black,thick](0,-.1)--(s.corner 3);
        \draw[black,thick](0,-.1)--(s.corner 4);
        \draw[black,thick](.15,.15)--(s.corner 1);
        \draw[black,thick](0,-.1)--(s.corner 2);
	\filldraw[color=black,fill=black] (.15,.15) circle (1pt);
	\filldraw[color=black,fill=white] (0,-.1) circle (1pt);
	\draw (1,0) node {$\leadsto$};
\end{tikzpicture} \hspace{-.5cm}
	\begin{tikzpicture}
     \tkzDefPoint(0,0){Z1}
     \tkzDefPoint(1.2,0){Z3}
     \tkzDefPoint(.6,1){Z2}
     \tkzDefPoint(.6,-1){Z4}
		\tkzDrawPolygon(Z1, Z2, Z3, Z4, Z1, Z3)
		\node[shift=(Z1), anchor=east] {\tiny $Z_1$};
		\node[shift=(Z2), anchor=south] {\tiny $Z_2$};
		\node[shift=(Z3), anchor=west] {\tiny $Z_3$};
		\node[shift=(Z4), anchor=north] {\tiny $Z_4$};
	\filldraw[color=black,fill=black] (Z1) circle (1pt);
	\filldraw[color=black,fill=black] (Z2) circle (1pt);
	\filldraw[color=black,fill=black] (Z3) circle (1pt);
	\filldraw[color=black,fill=black] (Z4) circle (1pt);
\end{tikzpicture} \quad \quad \quad
	\begin{tikzpicture}
		\draw (-1.3,0) circle (.5cm);
	\node[draw=none, minimum size=1cm, regular polygon, regular polygon sides=4] 
		at (-1.3,0) (s) {};
	\foreach \x in {1,2,...,4}
	\fill (s.corner \x) circle[radius=1.2pt];
	 \node[shift=(s.corner 1),anchor=west] {\tiny $2$};
	 \node[shift=(s.corner 2),anchor=east] {\tiny $1$};
	 \node[shift=(s.corner 3),anchor=east] {\tiny $4$};
	 \node[shift=(s.corner 4),anchor=west] {\tiny $3$};
        \draw[black,thick](-.1-1.3,0)--(s.corner 3);
        \draw[black,thick](.2-1.3,-.15)--(s.corner 4);
        \draw[black,thick](-.1-1.3,0)--(s.corner 1);
        \draw[black,thick](-.1-1.3,0)--(s.corner 2);
	\filldraw[color=black,fill=black] (.2-1.3,-.15) circle (1pt);
	\filldraw[color=black,fill=white] (-.1-1.3,0) circle (1pt);
	\draw (-.5,0) node {$\leadsto$};
		\draw (-1.3,-.8) node {\tiny $(2,4,\underline{3},1)$};
		\draw (2.5,-.8) node {\tiny $(\underline{1},3,4,2)$};
     \tkzDefPoint(0,0){Z1}
     \tkzDefPoint(1.2,0){Z3}
     \tkzDefPoint(.6,1){Z2}
     \tkzDefPoint(.6,-1){Z4}
		\tkzDrawPolygon(Z1, Z2, Z3, Z4, Z1, Z2,Z4)
		\node[shift=(Z1), anchor=south east] {\tiny $Z_1$};
		\node[shift=(Z2), anchor=south] {\tiny $Z_2$};
		\node[shift=(Z3), anchor=south west] {\tiny $Z_3$};
		\node[shift=(Z4), anchor=north] {\tiny $Z_4$};
	\filldraw[color=black,fill=black] (Z1) circle (1pt);
	\filldraw[color=black,fill=black] (Z2) circle (1pt);
	\filldraw[color=black,fill=black] (Z3) circle (1pt);
	\filldraw[color=black,fill=black] (Z4) circle (1pt);
		\draw (2.5,0) circle (.5cm);
	\node[draw=none, minimum size=1cm, regular polygon, regular polygon sides=4] 
		at (2.5,0) (s) {};
	\foreach \x in {1,2,...,4}
	\fill (s.corner \x) circle[radius=1.2pt];
	 \node[shift=(s.corner 1),anchor=west] {\tiny $2$};
	 \node[shift=(s.corner 2),anchor=east] {\tiny $1$};
	 \node[shift=(s.corner 3),anchor=east] {\tiny $4$};
	 \node[shift=(s.corner 4),anchor=west] {\tiny $3$};
        \draw[black,thick](.1+2.5,0)--(s.corner 3);
        \draw[black,thick](.1+2.5,0)--(s.corner 4);
        \draw[black,thick](.1+2.5,0)--(s.corner 1);
        \draw[black,thick](-.2+2.5,.1)--(s.corner 2);
	\filldraw[color=black,fill=white] (.1+2.5,0) circle (1pt);
	\filldraw[color=black,fill=black] (-.2+2.5,.1) circle (1pt);
		\draw (2.5-.7,0) node {\rotatebox{180}{$\leadsto$}};
\end{tikzpicture}
	\caption{The two positroid tilings of 
	$\mathcal{A}_{4,1,2}(Z)$ for $\tilde{Z}$. 
	The plabic graphs specify the positroid cells
	whose images are the positroid tiles (Grasstopes).}
	\label{fig:T2}
\end{figure}

\begin{definition}\label{hatmap}
	Let  $\pi=(a_1, a_2,\dots ,a_n)$ 
	be a loopless decorated permutation (written
   in one-line notation). Its \emph{T-dual} decorated permutation is
    $\hat{\pi} : i \mapsto \pi(i-1)$, so that
	$\hat{\pi} = (a_n, a_1, a_2,\dots ,a_{n-1})$.
	Any fixed points in $\hat{\pi}$ are declared to be 
	loops.\footnote{Note that 
	our use of the `hat' notation here is 
	unrelated to the one from \cref{rem:twisted}.}
\end{definition}

For example, the four permutations $(3,1,4,2)$, $(2,4,1,3)$,
$(4,3,1,2)$, $(3,4,2,1)$ labeling the positroid tilings
of $\Delta_{2,4}$ in \cref{fig:T1}  are loopless.  Their 
T-dual images are $(2,3,1,\underline{4})$, $(3,\underline{2},4,1)$,
$(2,4,\underline{3},1)$, and $(\underline{1},3,4,2)$ -- precisely
the permutations labeling the positroid tilings of $\mathcal{A}_{4,1,2}(Z)$ in 
\cref{fig:T2}!

The T-duality map appears in 
 \cite{karp:2017ouj, LPW,
 posquotients,
 GalCritVar}, 
and is a version of an $m=4$ map from \cite{abcgpt}.

\begin{prop}
	[{\cite[Lemma 5.2]{LPW} and 
	\cite[Proposition 8.1]{PSBW}}]
	\label{prop:posetiso}
      T-duality is a bijection 
	between
       loopless cells of $Gr^{\geq 0}_{k+1,n}$
       and coloopless cells of $\Grk$.  
	Moreover it is a poset isomorphism: we have 
	$S_\mu \subset \overline{S_\pi}$ if and only if $S_{\td{\mu}} \subset \overline{S_{\td{\pi}}}$.
\end{prop}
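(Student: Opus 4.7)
My plan is to reduce the claim to a statement about right multiplication by the cyclic shift in the extended affine symmetric group. I would first check the bijection combinatorially, then translate both sides to bounded affine permutations via the Knutson--Lam--Speyer correspondence, where T-duality becomes a cyclic shift and the poset isomorphism follows from the invariance of the Bruhat order.

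For the bijection, I would use that $\hat\pi^{-1}(i) \equiv \pi^{-1}(i) + 1 \pmod{n}$, so the strict anti-excedances of $\hat\pi$ are exactly those $i$ with $\pi^{-1}(i) \ge i$ and $\pi^{-1}(i) \ne n$. Since $\pi$ is loopless, the set $\{i : \pi^{-1}(i) \ge i\}$ has size $k+1$ (counting the strict anti-excedances and coloops of $\pi$ together), and exactly one index, $i = \pi(n)$, is lost to the cyclic wrap-around. So $\hat\pi$ has $k$ strict anti-excedances and (by declaration) no coloops, giving a coloopless permutation of type $(k,n)$. The inverse map $\sigma \mapsto (i \mapsto \sigma(i+1))$, with fixed points declared coloops, is handled by the same argument with the endpoint conditions swapped.

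Next I would encode each decorated permutation $\pi$ of type $(k,n)$ as a bounded affine permutation $f_\pi : \mathbb{Z} \to \mathbb{Z}$, satisfying $f_\pi(i+n) = f_\pi(i) + n$, $i \le f_\pi(i) \le i+n$, $f_\pi(i) \equiv \pi(i) \pmod n$, and $\sum_{i=1}^n (f_\pi(i)-i) = kn$, resolving the ambiguity at fixed points by assigning $i$ to loops and $i+n$ to coloops. Under this dictionary, loopless cells of $\Gr^{\ge 0}_{k+1,n}$ correspond to bounded affine permutations at level $k+1$ with $f(i) > i$ for all $i$, while coloopless cells of $\Grk$ correspond to those at level $k$ with $f(i) < i+n$ for all $i$. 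A short case analysis on the comparison of $\pi(i-1)$ with $i-1$ and $i$ then shows that $f_{\hat\pi}(i) = f_\pi(i-1)$ in every case, i.e.\ T-duality is right multiplication by the cyclic shift $\rho : i \mapsto i-1$; an easy check confirms that this operation exchanges ``loopless at level $k+1$'' with ``coloopless at level $k$.''

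Finally, for the order-preservation I would invoke the Knutson--Lam--Speyer identification of the positroid closure order with the (reversed) Bruhat order on bounded affine permutations, together with the rank-function characterization $f \le g \iff r_f(i,j) \le r_g(i,j)$ for all $i,j \in \mathbb{Z}$, where $r_f(i,j) := |\{a \le i : f(a) \le j\}|$. A one-line substitution yields $r_{f\rho}(i,j) = r_f(i-1, j)$, so right multiplication by $\rho$ preserves these inequalities and is an automorphism of the Bruhat poset. Composing, $S_\mu \subseteq \overline{S_\pi}$ iff $f_\mu \le f_\pi$ iff $f_{\hat\mu} \le f_{\hat\pi}$ iff $S_{\hat\mu} \subseteq \overline{S_{\hat\pi}}$, as claimed. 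The main technical obstacle is the bookkeeping in the translation $\pi \leftrightarrow f_\pi$: loops and coloops play asymmetric roles under T-duality, and one must check carefully that the loopless/coloopless boundary conditions correspond exactly to the boundedness conditions imposed or freed by right multiplication by $\rho$; once this is pinned down, order-preservation is essentially automatic from the rank function identity.
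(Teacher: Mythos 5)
Your argument is correct and follows essentially the same route as the cited sources: encode decorated permutations as bounded affine permutations in the style of Knutson--Lam--Speyer, observe that T-duality becomes right multiplication by the length-zero cyclic shift $\rho$, and conclude the poset isomorphism from invariance of Bruhat order under multiplication by elements of the fundamental group. The case analysis showing $f_{\hat\pi}(i)=f_\pi(i-1)$ (with loops of $\hat\pi$ matching the coloops of $\pi$ through the $\pi(i-1)=i-1$ case, and the $i=1$ wrap-around handled by the affine periodicity $f(0)=f(n)-n$) is exactly the bookkeeping that makes the translation work, and the anti-excedance count losing the single index $\pi(n)$ correctly gives the drop from $k+1$ to $k$. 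One small wrinkle: as written your rank function $r_f(i,j)=|\{a\le i: f(a)\le j\}|$ is infinite for an affine permutation; you want something like $|\{a\le i: f(a)\ge j\}|$ (finite because $f(a)\le a+n$), or a windowed version, but the shift identity $r_{f\rho}(i,j)=r_f(i-1,j)$ and the resulting order-preservation go through unchanged once the correct finite rank function is used.
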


One can also describe T-duality as a map on reduced 
plabic graphs $G$; we say 
$G$ is \emph{black-trivalent} (\emph{white-trivalent}) if all of its interior black (white) vertices are trivalent.
The following construction 
streamlines the bijection of \cite[Proposition 7.15]{GalashinPostWilliams} and
\cite[Proposition 8.3]{GalCritVar} (by avoiding an intermediate step
involving plabic and zonotopal tilings).

\begin{definition}[{\cite[Definition 8.6]{PSBW}}] 
	\label{defn:plabicTDual} Let $G$ be a reduced black-trivalent plabic graph. The \emph{T-dual} of $G$, denoted $\hat{G}$, is the graph obtained as follows
	(see \cref{fig:plabicduality}).  
\begin{itemize}
\item In each face $f$ of $G$, place a black vertex $\hat{b}(f)$.
\item``On top of" each black vertex $b$ of $G$, place a white vertex $\hat{w}(b)$;
\item For each black vertex $b$ of $G$ incident to face $f$, add edge 
	$(\hat{w}(b), \hat{b}(f))$;
\item Put $\hat{i}$ on the boundary of $G$ between vertices $i-1$ and $i$ and draw an edge from $\hat{i}$ to $\hat{b}(f)$, where $f$ is the adjacent boundary face.
\end{itemize}
\end{definition}
 Note that
 the plabic graphs in \cref{fig:T2} are obtained
 from those 
 in \cref{fig:T1}   by T-duality.


\begin{figure}[h]\centering
	\resizebox{1.6in}{!}{
	\begin{tikzpicture}
		\draw (0,0) circle (2.15cm);
	 \draw[color=black, thin] (-1.4,.3)--(-.8,.8)--(0,.8)--(0.5,.3)--(0.8,-.4)--(0.3,-1.1)--(-.5,-1.35);
	 \draw[color=black, thin] (-2.13,-.3)--(-1.4,.3)--(-1.9,1);
	\filldraw[color=black,fill=black] (-1.4,.3) circle (2pt);
	\draw[color=black,thin] (-.8,.8)--(-.5,2.1);
	\filldraw[color=black,fill=black] (-.8,.8) circle (2pt);
	\draw[color=black,thin] (0,.8)--(.73,2.03);
	\filldraw[color=black,fill=white] (0,.8) circle (2pt);
	\draw[color=black,thin] (.5,.3)--(1.8,1.2);
	\filldraw[color=black,fill=black] (0.5,.3) circle (2pt);
	\draw[color=black,thin] (.8,-.4)--(2.15,-.1);
	\filldraw[color=black,fill=black] (0.8,-.4) circle (2pt);
	\draw[color=black,thin] (.3,-1.1)--(1.5,-1.57);
	\filldraw[color=black,fill=black] (0.3,-1.1) circle (2pt);
	\draw[color=black,thin] (0.2,-2.15)--(-.5,-1.35)--(-1.3,-1.7);
	\filldraw[color=black,fill=white] (-.5,-1.35) circle (2pt);
	\draw[color=black] (1.9, 1.3) node {{ $2$}};
	\draw[color=black] (2.3, -.1) node {{ $3$}};
	\draw[color=black] (1.6, -1.6) node {{ $4$}};
	\draw[color=black] (0.1, -2.4) node {{ $5$}};
	\draw[color=black] (-1.6, -1.8) node {{ $6$}};
	\draw[color=black] (-2.3, -.4) node {{ $7$}};
	\draw[color=black] (-2.1, 1.2) node {{ $8$}};
	\draw[color=black] (-.6, 2.3) node {{ $9$}};
	\draw[color=black] (.7, 2.2) node {{ $1$}};
	\end{tikzpicture} \quad \quad
}
\resizebox{1.4in}{!}{
	\begin{tikzpicture}
		\draw (0,0) circle (2.15cm);
	\node[draw=none, minimum size=4.3cm, regular polygon, regular polygon sides=9] (s) {};
	\node[draw=none, minimum size=3.6cm, regular polygon, regular polygon sides=9] (a) {};
	\foreach \x in {1,2,...,9}
		\fill[color=blue] (s.corner \x) circle[radius=1.2pt];
	\foreach \x in {1,2,...,9}
		\fill[color=blue] (a.corner \x) circle[radius=3pt];
	\foreach \x in {1,2,...,9}
        \draw[blue,thick](a.corner \x)--(s.corner \x);
		\node[color=blue, shift=(s.corner 1), anchor=south] {$\hat{1}$};
		\node[color=blue, shift=(s.corner 2),anchor=south] {$\hat{9}$};
		\node[color=blue,shift=(s.corner 3),anchor=east] {$\hat{8}$};
		\node[color=blue, shift=(s.corner 4),anchor=east] {$\hat{7}$};
		\node[color=blue, shift=(s.corner 5),anchor=north] {$\hat{6}$};
		\node[color=blue, shift=(s.corner 6),anchor=north] {$\hat{5}$};
		\node[color=blue, shift=(s.corner 7),anchor=west] {$\hat{4}$};
		\node[color=blue, shift=(s.corner 8),anchor=west] {$\hat{3}$};
		\node[color=blue, shift=(s.corner 9),anchor=south west] {$\hat{2}$};
        \draw[blue,thick](-.8,.8)--(a.corner 1);
        \draw[blue,thick](-.8,.8)--(a.corner 2);
        \draw[blue,thick](-.8,.8)--(a.corner 4);
        \draw[blue,thick](-1.4,.3)--(a.corner 2);
        \draw[blue,thick](-1.4,.3)--(a.corner 3);
        \draw[blue,thick](-1.4,.3)--(a.corner 4);
        \draw[blue,thick](0.5,.3)--(a.corner 4);
        \draw[blue,thick](0.5,.3)--(a.corner 9);
        \draw[blue,thick](0.5,.3)--(a.corner 8);
        \draw[blue,thick](0.3,-1.1)--(a.corner 4);
        \draw[blue,thick](0.3,-1.1)--(a.corner 6);
        \draw[blue,thick](0.3,-1.1)--(a.corner 7);
        \draw[blue,thick](0.8,-.4)--(a.corner 4);
        \draw[blue,thick](0.8,-.4)--(a.corner 7);
        \draw[blue,thick](0.8,-.4)--(a.corner 8);
	\filldraw[color=blue,fill=white] (-.8,.8) circle (4.5pt);
	\filldraw[color=blue,fill=white] (-1.4,.3) circle (4.5pt);
	\filldraw[color=blue,fill=white] (0.5,.3) circle (4.5pt);
	\filldraw[color=blue,fill=white] (0.3,-1.1) circle (4.5pt);
	\filldraw[color=blue,fill=white] (0.8,-.4) circle (4.5pt);
	\filldraw[color=blue,fill=white] (-.8,.8) circle (4pt);
	\filldraw[color=blue,fill=white] (-1.4,.3) circle (4pt);
	\filldraw[color=blue,fill=white] (0.5,.3) circle (4pt);
	\filldraw[color=blue,fill=white] (0.3,-1.1) circle (4pt);
	\filldraw[color=blue,fill=white] (0.8,-.4) circle (4pt);
	 \draw[color=black, thin] (-1.4,.3)--(-.8,.8)--(0,.8)--(0.5,.3)--(0.8,-.4)--(0.3,-1.1)--(-.5,-1.35);
	 \draw[color=black, thin] (-2.13,-.3)--(-1.4,.3)--(-1.9,1);
	\filldraw[color=black,fill=black] (-1.4,.3) circle (1.5pt);
	\draw[color=black,thin] (-.8,.8)--(-.5,2.1);
	\filldraw[color=black,fill=black] (-.8,.8) circle (1.5pt);
	\draw[color=black,thin] (0,.8)--(.73,2.03);
	\filldraw[color=black,fill=white] (0,.8) circle (1.5pt);
	\draw[color=black,thin] (.5,.3)--(1.8,1.2);
	\filldraw[color=black,fill=black] (0.5,.3) circle (1.5pt);
	\draw[color=black,thin] (.8,-.4)--(2.15,-.1);
	\filldraw[color=black,fill=black] (0.8,-.4) circle (1.5pt);
	\draw[color=black,thin] (.3,-1.1)--(1.5,-1.57);
	\filldraw[color=black,fill=black] (0.3,-1.1) circle (1.5pt);
	\draw[color=black,thin] (0.2,-2.15)--(-.5,-1.35)--(-1.3,-1.7);
	\filldraw[color=black,fill=white] (-.5,-1.35) circle (1.5pt);
	\draw[color=black] (1.9, 1.3) node {{ $2$}};
	\draw[color=black] (2.3, -.1) node {{ $3$}};
	\draw[color=black] (1.6, -1.6) node {{ $4$}};
	\draw[color=black] (0.1, -2.4) node {{ $5$}};
	\draw[color=black] (-1.6, -1.8) node {{ $6$}};
	\draw[color=black] (-2.3, -.4) node {{ $7$}};
	\draw[color=black] (-2.1, 1.2) node {{ $8$}};
	\draw[color=black] (-.6, 2.3) node {{ $9$}};
	\draw[color=black] (.7, 2.2) node {{ $1$}};
\end{tikzpicture}
}
	\caption{ At left: a plabic graph $G$ with trip permutation
	$\pi_G = (5,9,2,3,6,4,1,7,8)$.  At right: $G$ with the T-dual graph
	$\hatG$ superimposed.  We have $\pi_{\hatG} = 
	 (8,5,9,2,3,\underline{6},4,1,7)$. Note that $\hatG$ is the 
	 graph from 
	 \cref{fig:plabic}.}
        \label{fig:plabicduality}
\end{figure}
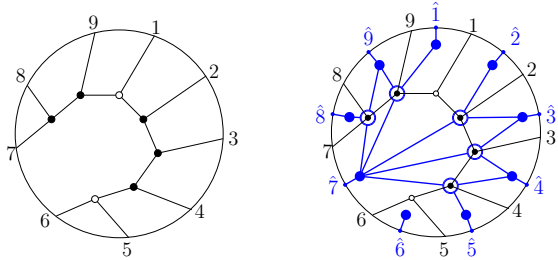

\begin{prop} 
	[{\cite[Proposition 8.8]{PSBW}}]
	\label{prop:TdualityPlabicGraphs} 
	Let $G$ be a reduced black-trivalent plabic graph with trip permutation $\pi_G = \pi$. Then $\hat{G}$ is a reduced white-trivalent plabic graph with  $\pi_{\hat{G}} = \hat{\pi}$.
\end{prop}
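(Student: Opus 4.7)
The plan is to verify, in sequence, four things about $\hat{G}$: that it is a well-defined planar bicolored graph, that its interior white vertices are trivalent, that its trip permutation is $\hat{\pi}$, and that it is reduced.

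The first two claims follow essentially from the construction. I would embed $\hat{w}(b)$ at the same point in the disk as the black vertex $b$ of $G$ and $\hat{b}(f)$ in the interior of the face $f$; then each edge $(\hat{w}(b),\hat{b}(f))$ of $\hat{G}$ can be routed entirely within the face $f$ of $G$, so no edges cross and planarity holds. The bipartite two-coloring is built into \cref{defn:plabicTDual}, and the boundary vertices $\hat{i}$ are placed between $i-1$ and $i$ by definition, giving the correct cyclic ordering. For white-trivalence, the vertex $\hat{w}(b)$ has one neighbor per face of $G$ incident to $b$, i.e.\ $\deg_G(b) = 3$ neighbors by black-trivalence of $G$.

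The heart of the proof is the trip permutation formula. The idea is to show that the trip $\hat{T}$ in $\hat{G}$ starting at $\hat{i}$ ``hugs'' the trip $T$ in $G$ starting at $i-1$, staying on a fixed side of $T$, and exits the disk between boundary vertices $\pi(i-1)-1$ and $\pi(i-1)$ of $G$, which is precisely the location of $\widehat{\pi(i-1)} = \widehat{\hat{\pi}(i)}$. Concretely, if $T$ passes through black vertices $b_1, \dots, b_s$ of $G$ (interspersed with possibly many white vertices of $G$) and $f_0, f_1, \dots, f_s$ are the faces of $G$ on a fixed side of $T$ (with $f_0$ the boundary face between $i-1$ and $i$), I would show that $\hat{T}$ visits, in order,
\[
\hat{i},\ \hat{b}(f_0),\ \hat{w}(b_1),\ \hat{b}(f_1),\ \hat{w}(b_2),\ \dots,\ \hat{b}(f_s),\ \widehat{\pi(i-1)}.
\]
Verifying this reduces to a local check at each vertex of $\hat{G}$: at the black vertex $\hat{b}(f_j)$ one must turn maximally right, sending the strand from $\hat{w}(b_j)$ to $\hat{w}(b_{j+1})$; at the white vertex $\hat{w}(b_j)$ one must turn maximally left, sending the strand from $\hat{b}(f_{j-1})$ to $\hat{b}(f_j)$. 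Black-trivalence is essential in the second case because it ensures the three edges at $\hat{w}(b_j)$ inherit the cyclic order of the three faces around $b_j$ in $G$, so that ``maximally left at $\hat{w}(b_j)$ in $\hat{G}$'' matches ``maximally right at $b_j$ in $G$'' with respect to the fixed side of $T$. Note that when $T$ crosses a white vertex of $G$, no vertex of $\hat{G}$ is visited, but the face of $G$ on the fixed side of $T$ switches; this accounts for the segments of $T$ between consecutive $b_j$'s sitting inside a single face $f_j$ of $G$, which is exactly what the local picture at $\hat{b}(f_j)$ reflects.

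Reducedness of $\hat{G}$ then follows from the fundamental theorem of reduced plabic graphs cited in the excerpt: it suffices to check that no strand of $\hat{G}$ crosses itself and no two strands of $\hat{G}$ cross twice with the same orientation, and both conditions can be deduced from the corresponding property for strands of $G$ via the strand correspondence established above. The main obstacle I anticipate is the careful bookkeeping in the local case analysis: orientation conventions for ``turning maximally right/left'' must be tracked consistently, and one must confirm that the fixed-side-of-$T$ data is preserved both when $T$ turns at a black vertex of $G$ and when $T$ passes through a white vertex of $G$. A single diagram showing one trivalent black vertex $b$ of $G$ with its three incident faces and the corresponding trivalent white vertex $\hat{w}(b)$ of $\hat{G}$ makes the key local step transparent, after which the global claim follows by induction on the length of $T$.
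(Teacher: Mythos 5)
Your overall plan — establish that each strand $\hat{T}$ of $\hat{G}$ starting at $\hat{i}$ ``hugs'' the strand $T$ of $G$ starting at $i-1$ along one side, verify the local turn rules, and deduce the trip permutation and reducedness — is the standard argument and matches the approach of \cite[Proposition 8.8]{PSBW}. The local analysis you sketch at $\hat{b}(f_j)$ and $\hat{w}(b_j)$ is the right one, and you correctly identify that black-trivalence of $G$ is needed so that the left turn at $\hat{w}(b_j)$ lands on $\hat{b}(f_j)$ rather than some other face of $b_j$.

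Two corrections to the write-up, though. First, the sentence asserting that ``when $T$ crosses a white vertex of $G$, \dots the face of $G$ on the fixed side of $T$ switches'' is backwards and contradicts your own indexing of $f_0,\dots,f_s$. With the convention of \cref{def:rules} (right at black, left at white), the strand $T$ keeps black vertices on its left and white vertices on its right; hence the face to the \emph{left} of $T$ (which is the side you have fixed, since $f_0$ is the boundary face between $i-1$ and $i$) changes precisely at the black vertices $b_1,\dots,b_s$ and is constant across white vertices. The face that switches at white vertices is the one on the \emph{right}. This constancy across white vertices is exactly what makes the segment of $T$ between $b_j$ and $b_{j+1}$ border the single face $f_j$, so the conclusion you want is right even though the stated reason is reversed. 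Second, the fundamental theorem you cite (move-equivalence is determined by the trip permutation) is not a reducedness criterion and does not reduce the verification to strand conditions; you want instead Postnikov's characterization of reduced plabic graphs in terms of forbidden strand configurations (no round trips except lollipops, no strand self-intersections, no bad double crossings), which is a different theorem from \cite{postnikov}. With that criterion in hand, the transfer from $G$ to $\hat{G}$ via the hugging correspondence does go through, though one should also dispose of the degenerate case $\hat\pi(i)=i$ (which forces $f_0=f_s$ and hence, for reduced $G$, that $\hat{b}(f_0)$ has degree one, giving a genuine lollipop).
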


T-duality provides a link between 
positroid tilings of $\A_{n,k,2}(Z)$ and $\Delta_{k+1,n}$.
The first result is that T-duality gives a bijection
between positroid tiles of $\Delta_{k+1,n}$ (see
\cref{prop:tree}) and 
positroid tiles of 
$\mathcal{A}_{n,k,2}$ 
	(see \cref{thm:allGTs}). 

Given a bicolored triangulation $\T$,
we define  $\area(a\to b)$ to be the number of black triangles to the 
left of $a\to b$ in any triangulation of $\overline{\T}$ compatible
with $a\to b$.

\begin{theorem}[{\cite[Section 8]{PSBW}}]
	\label{cor:GTsInBijection} 
	Given a bicolored triangulation $\T$ of type $(k,n)$, we can read off
	T-dual graphs $G$ and $\hat{G}$ giving positroid tiles 
$\Gamma_G$ and $Z_{\hat{G}}$ as follows:
\begin{itemize}
	\item  $G: = G(\T)$ is the dual graph 
		of $\T$, as 
		shown at the left of \cref{fig:unpuncTduality}.
	\item  $\hat{G}:=\hat{G}(\T)$ is the graph from
		\cref{def:param}, as shown at the right of \cref{fig:unpuncTduality}.
\end{itemize}
This correspondence gives a bijection between 
positroid tiles of $\Delta_{k+1,n}$ and $\A_{n,k,2}$, both of
	which depend only on $\overline{\T}$.

Moreover, if we let $h\to j$ (with $h<j$) 
	range over arcs 
of $\T$, the inequalities 
\begin{align*}
        &\mbox{(1)} 
	& \area(h \to j)+1 \geq  x_h+x_{h+1} + \dots + x_{j-1}  \geq \area(h \to j) \quad \text{for }&x \in \Gamma_{G(\T)}&\\
        &\mbox{(2)}
	&  
	(-1)^{\area(h \to j)} 
	\langle Y Z_h Z_j\rangle \geq 0
	\quad \text{for }&Y \in \gt{\hatG(\T)}&
\end{align*}
  cut out the positroid tiles $\Gamma_{G(\T)}$ and $\gt{\hatG(\T)}$.
\end{theorem}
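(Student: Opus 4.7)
The plan is to establish the three claims of the theorem sequentially: first that $G(\T)$ and $\hatG(\T)$ are T-dual plabic graphs, then that they produce a bijection between positroid tiles of $\Delta_{k+1,n}$ and $\A_{n,k,2}$ depending only on $\overline{\T}$, and finally that inequalities~(1) and~(2) cut out these tiles.

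First I would verify T-duality by direct inspection. Each internal black vertex of $G(\T)$ sits at the center of a black triangle of $\T$ and is trivalent, so $G(\T)$ is a reduced black-trivalent plabic graph, and its internal faces are in bijection with the $n$ vertices of $\mathbf{P}_n$. Applying \cref{defn:plabicTDual} places a black vertex at each corner of the polygon and a trivalent white vertex inside each black triangle, connected to its three corners -- precisely the graph $\hatG(\T)$ of \cref{def:param}. Then \cref{prop:TdualityPlabicGraphs} gives $\pi_{\hatG(\T)} = \widehat{\pi_{G(\T)}}$.

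Second, for the tiles and bijection: the dual graph of a polygon triangulation is a tree, so $G(\T)$ is a tree plabic graph; a face count gives $\dim S_{G(\T)} = n-1$, and a local trip-count shows $\pi_{G(\T)}$ has $k+1$ anti-excedances. By \cref{prop:tree}, $\Gamma_{G(\T)}$ is a positroid tile of $\Delta_{k+1,n}$. Since $\hatG(\T)$ is by construction the graph of \cref{def:param}, \cref{thm:allGTs} immediately gives that $\gt{\hatG(\T)}$ is a positroid tile of $\A_{n,k,2}$. Two equivalent bicolored triangulations differ by flipping a diagonal between two same-color triangles, which on $G(\T)$ corresponds to merging adjacent same-color vertices (move~(M2)), and on $\hatG(\T)$ to a combination of (M2) and (M3); hence both tiles depend only on $\overline{\T}$. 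Surjectivity on the amplituhedron side is \cref{thm:allGTs}, and T-duality (\cref{prop:posetiso}) transports this to a bijection on the hypersimplex side.

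Third, the inequalities. For~(1), a tree positroid polytope is cut out inside $\Delta_{k+1,n}$ by rank inequalities on cyclic intervals; a direct matching count on $G(\T)$ shows that when $h\to j$ is an arc of $\T$, the rank of the cyclic interval $[h,j-1]$ equals $\area(h\to j)+1$ while the rank of the complementary interval is $k+1-\area(h\to j)$, yielding the stated two-sided inequality. For~(2), expanding via \cref{eq:expand} gives
\[
\langle YZ_hZ_j\rangle \;=\; \sum_{J} p_J(C)\,\langle Z_{j_1}\cdots Z_{j_k}Z_hZ_j\rangle
\]
for $Y = CZ$ with $C\in \overline{S_{\hatG(\T)}}$. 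Positivity of $Z$ forces each cyclically ordered twistor to be positive, and the key combinatorial lemma is that the parity of the number of transpositions needed to sort $J\cup\{h,j\}$ cyclically equals $\area(h\to j)\pmod{2}$ for \emph{every} basis $J$ of the positroid of $\hatG(\T)$; combined with $p_J(C)\ge 0$, this yields~(2).

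The main obstacle will be showing that inequality~(2) in fact \emph{cuts out} the tile rather than merely holding on it. My plan is to combine \cref{thm:main1} with the parity lemma above to observe that the sign vector $(\sign\langle YZ_hZ_j\rangle)_{1\le h<j\le n}$ on $\gt{\hatG(\T)}$ is uniquely determined by $\overline{\T}$. Thus the closed locus cut out by~(2) lies in a single closed sign stratum containing $\gt{\hatG(\T)}$; since $\gt{\hatG(\T)}$ is closed and $2k$-dimensional with full-dimensional interior in this stratum, a dimension and open-inclusion argument then forces equality.
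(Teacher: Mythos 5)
Your overall architecture is sound, and the first two steps are on the right track: the direct verification that $\hatG(\T)$ is the T-dual of $G(\T)$ via \cref{defn:plabicTDual}, and invoking \cref{prop:tree} and \cref{thm:allGTs} for the tile statements (with move-(M2) equivalence giving well-definedness on $\overline{\T}$). The sketch that~(1) and~(2) \emph{hold} on the tiles is also reasonable: for~(1) via rank inequalities on cyclic intervals of the tree positroid, and for~(2) via the Pl\"ucker-expansion parity claim applied to \cref{eq:expand} -- though both the rank computation and the parity lemma are asserted rather than verified, and for~(1) you would still need to check that arcs of $\T$ supply exactly the facet-defining cyclic intervals of $\Gamma_{G(\T)}$.

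The genuine gap is in the argument that~(2) \emph{cuts out} the tile. Your plan -- observe that the sign vector on $\gto{\hatG(\T)}$ is determined by $\overline{\T}$, then conclude the locus cut out by~(2) lies in a single closed sign stratum -- does not work as stated, because the inequalities~(2) constrain only the twistors $\langle Y Z_h Z_j\rangle$ for $(h,j)$ an \emph{arc} of $\T$, i.e.\ roughly $n+k-1$ of the ${n \choose 2}$ twistor coordinates. A point of $\A_{n,k,2}(Z)$ satisfying~(2) can a priori have the non-arc twistors of either sign, placing it in a different sign stratum; nothing in your argument rules this out, so the claim that the locus lies in one closed stratum is unjustified, and the ensuing ``dimension and open-inclusion argument'' has nothing to rest on. The missing ingredient is precisely the cluster-algebraic fact stated later in the paper: by \cref{thm:clustvar}, the open tile $\gto{\hatG(\overline{\T})}$ equals the positive part $\mcv^{>0}_{\overline{\T}}$ of a cluster variety whose cluster coordinates are exactly the signed arc twistors, and by \cref{thm:adj} (cluster adjacency) any other $\langle Y Z_h Z_l\rangle$ has a fixed sign on the open tile because it is a subtraction-free Laurent expression in the arc twistors. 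This forces the non-arc signs and makes your sign-stratum argument go through; without it, or an explicit $w$-chamber decomposition in the spirit of \cref{def:ampchamber} and \cref{thm:wSimpCover}, the reverse inclusion $L \subseteq Z_{\hatG(\T)}$ is not established.
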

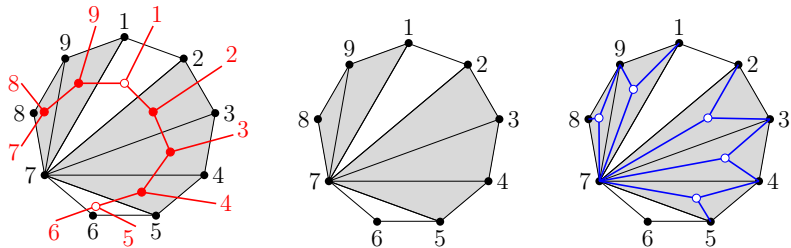
\begin{figure}[h]
\centering
\resizebox{4.3in}{!}{
\begin{tikzpicture}
	\node[draw, minimum size=3.2cm, regular polygon, regular polygon sides=9] (a) {};
	 \filldraw[fill=gray!30!white](a.corner 1)--(a.corner 2)--(a.corner 3)--(a.corner 4)--(a.corner 1);
	 \filldraw[fill=gray!30!white](a.corner 4)--(a.corner 6)--(a.corner 7)--(a.corner 8)--(a.corner 9)--(a.corner 4);
	\foreach \x in {1,2,...,9}
	\fill (a.corner \x) circle[radius=2pt];
	\draw[black,thin](a.corner 1)--(a.corner 4);
	\draw[black,thin](a.corner 9)--(a.corner 4);
	\draw[black,thin](a.corner 6)--(a.corner 4);
	\draw[black,thin](a.corner 2)--(a.corner 4);
	\draw[black,thin](a.corner 7)--(a.corner 4);
	\draw[black,thin](a.corner 8)--(a.corner 4);
	 \node[shift=(a.corner 1), anchor=south] {$1$};
	 \node[shift=(a.corner 2),anchor=south] {$9$};
	 \node[shift=(a.corner 3),anchor=east] {$8$};
	 \node[shift=(a.corner 4),anchor=east] {$7$};
	 \node[shift=(a.corner 5),anchor=north] {$6$};
	 \node[shift=(a.corner 6),anchor=north] {$5$};
	 \node[shift=(a.corner 7),anchor=west] {$4$};
	 \node[shift=(a.corner 8),anchor=west] {$3$};
	 \node[shift=(a.corner 9),anchor=west] {$2$};
	 \draw[color=red, thick] (-1.4,.3)--(-.8,.8)--(0,.8)--(0.5,.3)--(0.8,-.4)--(0.3,-1.1)--(-.5,-1.35);
	 \draw[color=red, thick] (-1.8,-.3)--(-1.4,.3)--(-1.8,.8);
	\filldraw[color=red,fill=red] (-1.4,.3) circle (2pt);
	\draw[color=red,thick] (-.8,.8)--(-.5,1.8);
	\filldraw[color=red,fill=red] (-.8,.8) circle (2pt);
	\draw[color=red,thick] (0,.8)--(.5,1.8);
	\filldraw[color=red,fill=white] (0,.8) circle (2pt);
	\draw[color=red,thick] (.5,.3)--(1.7,1.1);
	\filldraw[color=red,fill=red] (0.5,.3) circle (2pt);
	\draw[color=red,thick] (.8,-.4)--(1.9,-.1);
	\filldraw[color=red,fill=red] (0.8,-.4) circle (2pt);
	\draw[color=red,thick] (.3,-1.1)--(1.6,-1.3);
	\filldraw[color=red,fill=red] (0.3,-1.1) circle (2pt);
	\draw[color=red,thick] (0.2,-1.7)--(-.5,-1.35)--(-1.2,-1.6);
	\filldraw[color=red,fill=white] (-.5,-1.35) circle (2pt);
	\draw[color=red] (1.8, 1.3) node {{ $2$}};
	\draw[color=red] (2, 0) node {{ $3$}};
	\draw[color=red] (1.7, -1.3) node {{ $4$}};
	\draw[color=red] (0, -1.9) node {{ $5$}};
	\draw[color=red] (-1.3, -1.8) node {{ $6$}};
	\draw[color=red] (-2, -.4) node {{ $7$}};
	\draw[color=red] (-2, .8) node {{ $8$}};
	\draw[color=red] (-.6, 2) node {{ $9$}};
	\draw[color=red] (.5, 2) node {{ $1$}};
\end{tikzpicture}
	\hspace{.3cm}
\begin{tikzpicture}
	\node[draw, minimum size=3.2cm, regular polygon, regular polygon sides=9] (a) {};
	 \filldraw[fill=gray!30!white](a.corner 1)--(a.corner 2)--(a.corner 3)--(a.corner 4)--(a.corner 1);
	 \filldraw[fill=gray!30!white](a.corner 4)--(a.corner 6)--(a.corner 7)--(a.corner 8)--(a.corner 9)--(a.corner 4);
	\foreach \x in {1,2,...,9}
	\fill (a.corner \x) circle[radius=2pt];
	\draw[black,thin](a.corner 1)--(a.corner 4);
	\draw[black,thin](a.corner 9)--(a.corner 4);
	\draw[black,thin](a.corner 6)--(a.corner 4);
	\draw[black,thin](a.corner 2)--(a.corner 4);
	\draw[black,thin](a.corner 7)--(a.corner 4);
	\draw[black,thin](a.corner 8)--(a.corner 4);
	 \node[shift=(a.corner 1), anchor=south] {$1$};
	 \node[shift=(a.corner 2),anchor=south] {$9$};
	 \node[shift=(a.corner 3),anchor=east] {$8$};
	 \node[shift=(a.corner 4),anchor=east] {$7$};
	 \node[shift=(a.corner 5),anchor=north] {$6$};
	 \node[shift=(a.corner 6),anchor=north] {$5$};
	 \node[shift=(a.corner 7),anchor=west] {$4$};
	 \node[shift=(a.corner 8),anchor=west] {$3$};
	 \node[shift=(a.corner 9),anchor=west] {$2$};
\end{tikzpicture}
	\hspace{.3cm}
\begin{tikzpicture}
	\node[draw, minimum size=3.2cm, regular polygon, regular polygon sides=9] (a) {};
	 \filldraw[fill=gray!30!white](a.corner 1)--(a.corner 2)--(a.corner 3)--(a.corner 4)--(a.corner 1);
	 \filldraw[fill=gray!30!white](a.corner 4)--(a.corner 6)--(a.corner 7)--(a.corner 8)--(a.corner 9)--(a.corner 4);
	\foreach \x in {1,2,...,9}
	\fill (a.corner \x) circle[radius=2pt];
	\draw[black,thin](a.corner 1)--(a.corner 4);
	\draw[black,thin](a.corner 9)--(a.corner 4);
	\draw[black,thin](a.corner 6)--(a.corner 4);
	\draw[black,thin](a.corner 2)--(a.corner 4);
	\draw[black,thin](a.corner 7)--(a.corner 4);
	\draw[black,thin](a.corner 8)--(a.corner 4);
	 \node[shift=(a.corner 1), anchor=south] {$1$};
	 \node[shift=(a.corner 2),anchor=south] {$9$};
	 \node[shift=(a.corner 3),anchor=east] {$8$};
	 \node[shift=(a.corner 4),anchor=east] {$7$};
	 \node[shift=(a.corner 5),anchor=north] {$6$};
	 \node[shift=(a.corner 6),anchor=north] {$5$};
	 \node[shift=(a.corner 7),anchor=west] {$4$};
	 \node[shift=(a.corner 8),anchor=west] {$3$};
	 \node[shift=(a.corner 9),anchor=west] {$2$};
        \draw[blue,thick](-.8,.8)--(a.corner 1);
        \draw[blue,thick](-.8,.8)--(a.corner 2);
        \draw[blue,thick](-.8,.8)--(a.corner 4);
        \draw[blue,thick](-1.4,.3)--(a.corner 2);
        \draw[blue,thick](-1.4,.3)--(a.corner 3);
        \draw[blue,thick](-1.4,.3)--(a.corner 4);
        \draw[blue,thick](0.5,.3)--(a.corner 4);
        \draw[blue,thick](0.5,.3)--(a.corner 9);
        \draw[blue,thick](0.5,.3)--(a.corner 8);
        \draw[blue,thick](0.3,-1.1)--(a.corner 4);
        \draw[blue,thick](0.3,-1.1)--(a.corner 6);
        \draw[blue,thick](0.3,-1.1)--(a.corner 7);
        \draw[blue,thick](0.8,-.4)--(a.corner 4);
        \draw[blue,thick](0.8,-.4)--(a.corner 7);
        \draw[blue,thick](0.8,-.4)--(a.corner 8);
	\filldraw[color=blue,fill=white] (-.8,.8) circle (2pt);
	\filldraw[color=blue,fill=white] (-1.4,.3) circle (2pt);
	\filldraw[color=blue,fill=white] (0.5,.3) circle (2pt);
	\filldraw[color=blue,fill=white] (0.3,-1.1) circle (2pt);
	\filldraw[color=blue,fill=white] (0.8,-.4) circle (2pt);
\end{tikzpicture}
}
        \caption{In the middle: a bicolored triangulation  $\T$, 
	with the dual graph $G(\T)$ to its left, and the T-dual
	graph $\hatG(\T)$ to its right.}
        \label{fig:unpuncTduality}
\end{figure}


We now explain how  Eulerian numbers enter the story.

\begin{defn}
Let $w \in S_n$. We call a letter $i\geq 2$ in $w$ a \emph{left descent}
	if  
 $w^{-1}(i) < w^{-1}(i-1)$.
 And we say that $i\in [n]$ in $w$ is a \emph{cyclic left descent} if either
 $i\geq 2$ is a left descent of $w$ or if 
	$i=1$ and 
	$w^{-1}(1) < w^{-1}(n)$.
Let $\cdes(w)$ denote the set of cyclic left descents of $w$.
\end{defn}

Let $D_{k+1, n}$ be the set of permutations $w \in S_n$ with $k+1$ cyclic descents and $w_n=n$. Note that $|D_{k+1, n}|$ equals
the \emph{Eulerian number} 
$E_{k,n-1}:= \sum_{\ell = 0}^{k+1} (-1)^{\ell} {n \choose \ell} (k+1-\ell)^{n-1}.$

\begin{defn} \label{defn:wsimplexHSimplex}
 For $w \in D_{k+1, n}$, let $w^{(a)}$ denote the cyclic rotation of $w$ ending at $a$.
Let $I_1=I_1(w):=\cdes(w)$ and 
	for $2 \leq r \leq n$, 
let $I_r=I_r(w):=\cdes(w^{(r-1)})$.
	We then define the \emph{w-simplex} $\simp{w}$ to be the convex hull
	$\simp{w}:=\convex(e_{I_1}, \dots, e_{I_n}) \subseteq \Delta_{k+1,n}$.
\end{defn}
\begin{example}
	If $w=(1,3,2,4)$, then we have 
	$I_1 = \{1,3\}$, $I_2 = \{2,3\}$, $I_3 = \{3,4\}$, and $I_4 = \{2,4\}$,
	so $\Delta_w = \convex(e_{13}, e_{23}, e_{34}, e_{24})$.
        See \cref{fig:T1}.
\end{example}

Stanley gave the first triangulation of the hypersimplex \cite{StanleyTriangulation}, see also 
 \cite{SturmfelsGrobner} and \cite{LamPost}.
\begin{proposition}
	[{\cite{StanleyTriangulation}}]
	\text{ We have }
	$$\Delta_{k+1,n} = \bigcup_{w\in D_{k+1,n}} \Delta_w.$$
\end{proposition}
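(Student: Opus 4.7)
The plan is to adapt Stanley's classical proof via partial sums and fractional parts. For $x = (x_1, \ldots, x_n) \in \Delta_{k+1,n}$, form the partial sums $y_i := x_1 + \cdots + x_i$, so $y_0 = 0$, $y_n = k+1$, and $x_i = y_i - y_{i-1} \in [0,1]$. Let $f_i := y_i - \lfloor y_i \rfloor$ denote the fractional parts, with $f_0 = f_n = 0$. The inclusion $\Delta_w \subseteq \Delta_{k+1,n}$ is easy: cyclic rotation preserves the number of cyclic left descents, so $|I_r(w)| = k+1$ for every $r$, making each vertex $e_{I_r(w)}$ a vertex of $\Delta_{k+1,n}$.

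For the reverse inclusion, observe that the hypersimplex constraint forces $\lfloor y_i \rfloor - \lfloor y_{i-1} \rfloor \in \{0,1\}$ and (generically) determines it by the sign of $f_i - f_{i-1}$: a descent $f_{i-1} > f_i$ corresponds to an integer jump of $y_i$. Consequently, the map $x \mapsto (f_1, \ldots, f_{n-1})$ is a piecewise-linear bijection from an open dense subset of $\Delta_{k+1,n}$ onto the open subset $U_k \subset (0,1)^{n-1}$ of those points whose sequence $(f_0, f_1, \ldots, f_{n-1})$ has exactly $k$ descents (the $(k+1)$-st descent always occurs at the wraparound $f_{n-1} \to f_n = 0$). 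Stratifying $(0,1)^{n-1}$ by the relative order of $(f_0, f_1, \ldots, f_{n-1})$ gives the standard order-simplex triangulation of the cube, indexed by permutations of $\{0, 1, \ldots, n-1\}$ fixing $0$; those whose image lies in $U_k$ are, after the shift $i \mapsto i+1$, in bijection with $D_{k+1,n}$.

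Pulling each order simplex back through the bijection above yields a closed simplex inside $\Delta_{k+1,n}$ whose $n$ vertices are lattice points of the form $e_I$. The main task is to identify these with $e_{I_1(w)}, \ldots, e_{I_n(w)}$. The crucial observation is that cyclically rotating $w$ to $w^{(r-1)}$ corresponds geometrically to shifting the base-point of the partial-sum construction by $r-1$, and this shift sends the vertex of the sorted simplex indexed by the original base-point to the vertex indexed by the rotated one, precisely reproducing the definition $I_r(w) = \cdes(w^{(r-1)})$. Once vertices are matched, the simplices $\Delta_w$ cover the interior of $\Delta_{k+1,n}$, and taking closures gives the desired equality. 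The main obstacle is this last vertex identification: matching cyclic rotations of $w$ with the re-indexing of partial sums is the combinatorial heart of the argument, and is what forces the seemingly elaborate definition of $\Delta_w$ via the rotations $w^{(r-1)}$.
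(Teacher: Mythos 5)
Your outline is essentially Stanley's original argument from \cite{StanleyTriangulation}, which is precisely the source the paper cites for this proposition. The change of coordinates via partial sums $y_i$ and fractional parts $f_i$, the observation that the hypersimplex constraint forces $\lfloor y_i\rfloor - \lfloor y_{i-1}\rfloor \in \{0,1\}$ with the value determined generically by the sign of $f_i - f_{i-1}$, and the order-simplex stratification of $(0,1)^{n-1}$ are all correct, and they are Stanley's. So the route you take is the intended one.

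The genuine gap is the vertex identification you flag but do not carry out. You assert that the preimage of the order simplex attached to a sorting permutation $\sigma$ has vertex set $\{e_{I_r(w)}\}_{r=1}^{n}$ with $I_r(w) = \cdes(w^{(r-1)})$, and you correctly call this ``the combinatorial heart of the argument,'' but the heuristic about ``shifting the base-point of the partial-sum construction'' is not a proof: all partial sums remain anchored at $y_0 = 0$, so nothing is literally re-based, and the claim needs an actual computation. Concretely, one should fix $\sigma$, read off from its descent pattern the set $J\subset[n]$ of positions where $\lfloor y_i\rfloor$ jumps (this set is constant on the interior of the $\sigma$-simplex), compute the preimage of each corner $v_j$ of the order simplex via $x_i = f_i - f_{i-1} + \mathbf{1}_{i\in J}$, and verify directly that the supports of the resulting $0/1$-vectors are $\cdes(w), \cdes(w^{(1)}), \dots, \cdes(w^{(n-1)})$ for the $w\in D_{k+1,n}$ matched to $\sigma$. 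This check is the substance of the proof: the rotation-based description of $\Delta_w$ used in the paper comes from \cite{LamPost} rather than from Stanley directly, so the step you skip is exactly the translation between the two formulations of the triangulation.
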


\begin{example}
	For example, $\Delta_{24} = 
	\Delta_{1324} \cup \Delta_{2134} \cup \Delta_{2314} 
	\cup \Delta_{3124}$.  This decomposition refines 
	the two positroid tilings shown in 
        \cref{fig:T1}
	(and this holds in general \cite{LamPost}).
\end{example}

\begin{defn} 
	\label{def:ampchamber}
	Let $w \in D_{k+1,n}$ and let $I_1,\dots,I_n$ be as in 
\cref{defn:wsimplexHSimplex}.
We define 
	$\asimpo{w}$ to be the open amplituhedron
	chamber 
	consisting of $Y \in \A_{n, k, 2}(Z)$ 
	such that for $a=1, \dots, n$, the 
sign flips of the sequence
\[
	(  \langle Y Z_a \hat{Z}_{1} \rangle , \langle Y Z_a \hat{Z}_{2} \rangle, \dots, \langle Y Z_a \hat{Z}_{a-1} \rangle, \langle Y Z_a Z_a\rangle, \langle Y Z_a Z_{a+1} \rangle, \dots, \langle 
	Y Z_a Z_n \rangle)
\]
occur precisely in positions
        $I_a \setminus \{a\}$,
		where we say a sign flip occurs in position $j$ if $\langle Y Z_a Z_j \rangle$ and $\langle Y Z_a Z_{j+1} \rangle$ are nonzero and have different signs (if $j=n$ we consider $j+1=1$).

	We sometimes refer to $\asimpo{w}$ and 
	$\asimp{w}:= \overline{\asimpo{w}}$ as 
	open and closed \emph{$w$-chambers}.
\end{defn}

\begin{example}
	If $w=(1,3,2,4)$, then 
	$I_1 = \{1,3\}$, $I_2 = \{2,3\}$, $I_3 = \{3,4\}$, and $I_4 = \{2,4\}$,
	so $\asimpo{w}$ consists of $Y \in \A_{n,1,2}(Z)$
	such that  
	$\lrangle{YZ_1 Z_4}<0$,
	$\lrangle{YZ_2 Z_4}<0$, 
	and the other four
	$\lrangle{YZ_i Z_j}$ with $i<j$ are positive.
        In \cref{fig:T2}, this corresponds to the triangle
	with vertices $Z_3$, $Z_4$, and the point where
	the two diagonals of the quadrilateral intersect.
\end{example}


For some choices of $Z$, the $w$-chamber
$\asimpo{w}$ can be empty.
However 
for each $w\in D_{k+1,n}$ one can find explicit matrices 
$Z\in \Mat_{n,k+2}^{>0}$
such that $\asimpo{w}$ is nonempty \cite[Section 11]{PSBW}.
Moreover
the $w$-chambers are
the only  amplituhedron
chambers which are realizable.

\begin{thm} 
	[{\cite[Theorem 10.10]{PSBW}}]
	\label{thm:wSimpCover}
	For any $Z \in \Mat^{>0}_{n, k+2}$,
		we have 
        $$\A_{n, k, 2}(Z)= \bigcup_{w \in D_{k+1,n}} \asimp{w}.
	$$
 \end{thm}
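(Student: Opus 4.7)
The inclusion $\bigcup_{w \in D_{k+1,n}} \asimp{w} \subseteq \A_{n,k,2}(Z)$ is immediate from \cref{def:ampchamber}, so I would focus on the reverse inclusion. Since $\bigcup_w \asimp{w}$ is closed, it suffices to show that a dense subset of the amplituhedron is contained in $\bigcup_w \asimpo{w}$. Take $Y = \tilde{Z}(C)$ with $C \in \Gr_{k,n}^{>0}$ generic enough that all twistor coordinates $\lrangle{YZ_iZ_j}$ with $i \neq j$ are nonzero; such $Y$ fill a dense open subset of $\A_{n,k,2}(Z)$.

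For such $Y$ and each $a \in [n]$, form the sign sequence $s_a$ as in \cref{def:ampchamber}. Invoking \cref{thm:main1} after cyclic relabeling (justified by the twisted cyclic symmetry from \cref{rem:twisted}, which acts on the amplituhedron by rotating $Z_1,\dots,Z_n,\hat{Z}_1,\dots$), $s_a$ has exactly $k$ sign flips, so $I_a := \{a\} \cup \{\text{sign-flip positions of } s_a\}$ has size $k+1$. The positivity statements of \cref{lem:boundary}, in particular $\lrangle{YZ_iZ_{i+1}} > 0$ for $i<n$ and $\lrangle{YZ_n\hat{Z}_1} > 0$, further constrain which sets $I_a$ can arise.

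The crux is then to build a unique $w \in D_{k+1,n}$ from $(I_1,\dots,I_n)$ so that $\cdes(w^{(a-1)}) = I_a$. Incrementing $a$ to $a+1$ cyclically rotates the underlying twistor-coordinate sequence, and the resulting local change from $I_a$ to $I_{a+1}$ encodes a single pairwise swap in the cyclic order. Composing these $n$ local moves determines the relative cyclic position of every pair of elements in $[n]$, producing $w$. One then verifies $w_n = n$ (using $\lrangle{YZ_n\hat{Z}_1}>0$ to pin down the position of $n$) and that $\cdes(w^{(a-1)}) = I_a$ holds by construction, placing $Y$ in $\asimpo{w}$.

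The main obstacle is the global consistency of $w$: the local $I_a \to I_{a+1}$ moves must compose coherently into a single permutation rather than merely a collection of compatible local data. This compatibility ultimately rests on three-term Pl\"ucker-type relations among twistor coordinates, guaranteed by $Z \in \Mat_{n,k+2}^{>0}$, which rule out inconsistent sign patterns. A tempting alternative route would transfer Stanley's triangulation $\Delta_{k+1,n} = \bigcup_w \Delta_w$ to the amplituhedron side via T-duality (\cref{cor:GTsInBijection}), but T-duality operates on positroid cells and Stanley's simplices are not positroid polytopes, so it does not transfer directly and would require genuinely new bookkeeping.
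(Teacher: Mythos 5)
Your reduction to generic points is sound and the first two steps are essentially right. The set of $Y\in\A_{n,k,2}(Z)$ with all twistors $\lrangle{YZ_iZ_j}$ ($i\ne j$) nonzero is open and dense, so it suffices to place each such $Y$ in some $\asimpo{w}$. For each $a$, the twisted cyclic shift $S$ of \cref{rem:twisted} satisfies $\A_{n,k,2}(SZ)=\A_{n,k,2}(Z)$ as a subset of $\Gr_{k,k+2}$ (since $C\mapsto CS$ preserves $\Gr^{\ge 0}_{k,n}$), so \cref{thm:main1} applied to $S^{a-1}Z$ gives exactly $k$ sign flips of $s_a$ for generic $Y$, and hence $|I_a|=k+1$ where $I_a:=\{a\}\cup\{\text{flip positions of }s_a\}$. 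Your final remark is also correct: T-duality acts on positroid cells, Stanley's simplices $\Delta_w$ are not positroid polytopes, and so Stanley's triangulation does not transfer formally.

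The gap — which you flag but do not fill, and which is the real content of the theorem — is the reconstruction of $w$ from $(I_1,\dots,I_n)$. Three distinct things must be proved, and none is routine. First, $s_a$ and $s_{a+1}$ are built from disjoint families of twistors ($\lrangle{YZ_aZ_\bullet}$ versus $\lrangle{YZ_{a+1}Z_\bullet}$), so there is no a priori relation between their flip sets; any relation must be extracted from the rank-two Pl\"ucker structure of $X=V^\perp\cap W\in\Gr_{2,n}$ together with the constraint $k\le\overline{\var}(x)\le k+1$ for $x\in X\setminus\{0\}$. Second, the needed combinatorial fact is sharper than ``a single pairwise swap'': a short computation with cyclic left descents shows that passing from $w^{(a-1)}$ to $w^{(a)}$ replaces exactly one element $b$ of the descent set by $b+1 \pmod n$, so you must show $I_{a+1}=(I_a\setminus\{b_a\})\cup\{b_a+1\}$ for some $b_a$, not merely that $|I_a\triangle I_{a+1}|=2$. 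Third, even granting this local structure, the resulting successor data $a-1\mapsto b_a$ must assemble into a single $n$-cycle (so that a unique permutation exists, which you then rotate to end at $n$); this global acyclicity is precisely where Pl\"ucker positivity enters and is the step your sketch replaces with the phrase ``rests on three-term Pl\"ucker-type relations.'' As written, the proposal is a correct strategy with the central lemma asserted rather than proved.
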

	By \cref{cor:GTsInBijection},
	each  positroid tile
 $Z_{\hat{G}}$ is a union of closed $w$-chambers.
\begin{example}
	For example, $\A_{4,1,2}(Z) = 
	\asimp{1324} \cup \asimp{2134} \cup \asimp{2314} 
	\cup \asimp{3124}$.  This decomposition refines 
	the two positroid tilings shown in 
	\cref{fig:T2}.
\end{example}

Given that positroid tiles
$\Gamma_G \subset \Delta_{k+1,n}$ are unions of $w$-simplices,
and positroid tiles $Z_{\hat{G}} \subset \A_{n,k,2}(Z)$ are unions of $w$-chambers, the following is the key to proving that positroid
tilings of $\Delta_{k+1,n}$ and $\A_{n,k,2}(Z)$ are in bijection.

 \begin{prop}
	 [{\cite[Proposition 11.1]{PSBW}}]
	 \label{prop:simplexContainment} 
	 Let $Z \in \Mat^{>0}_{n,k+2}$. Suppose $w \in D_{k+1,n}$ and that $\asimp{w} \neq \emptyset$. For any positroid tile $\Gamma_{\pi}$, $\simp{w} \subset \Gamma_{\pi}$ if and only if $\asimp{w} \subset \gt{\td{\pi}}$.
\end{prop}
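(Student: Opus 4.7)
The strategy is to reduce both containments to explicit combinatorial conditions on the cyclic descent data of $w$ and the area function of the bicolored triangulation associated to $\pi$, and then show these conditions coincide. By Theorem~\ref{thm:allGTs} we may write $\pi = \pi_{G(\T)}$ for a bicolored triangulation $\T$ of type $(k,n)$, in which case $\td\pi = \pi_{\hatG(\T)}$ by Proposition~\ref{prop:TdualityPlabicGraphs}; by Theorem~\ref{cor:GTsInBijection} both $\Gamma_\pi$ and $\gt{\td\pi}$ are then cut out by inequalities indexed by the same set of arcs of $\T$ and with matching area data.

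For the hypersimplex side, $\Gamma_\pi$ is convex and $\simp{w} = \convex(e_{I_1},\ldots,e_{I_n})$, so $\simp{w}\subseteq \Gamma_\pi$ is equivalent to asking that each vertex $e_{I_r}$ lie in $\Gamma_\pi$. Since $\sum_{i=h}^{j-1}(e_{I_r})_i = |I_r\cap[h,j-1]|$, evaluating the arc inequalities at $e_{I_r}$ yields the condition
\[
\area(h\to j)\le |I_r\cap[h,j-1]|\le \area(h\to j)+1
\]
for every arc $h\to j$ of $\T$ and every $r\in[n]$. On the amplituhedron side, Definition~\ref{def:ampchamber} forces every twistor coordinate to have a constant sign on $\asimp{w}$, so $\asimp{w}\subseteq \gt{\td\pi}$ is equivalent to requiring that, for every arc $h\to j$, this constant sign agrees with $(-1)^{\area(h\to j)}$ (or is zero). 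I would read this sign off directly by anchoring at $\langle YZ_hZ_{h+1}\rangle >0$ (Lemma~\ref{lem:boundary}) and counting sign flips along the sequence from Definition~\ref{def:ampchamber} at $a=h$, whose flip set is $I_h\setminus\{h\}$: the result is $\sgn\langle YZ_hZ_j\rangle = (-1)^{|I_h\cap[h+1,j-1]|}$, so the amplituhedron containment becomes the parity condition
\[
|I_h\cap[h+1,j-1]| \equiv \area(h\to j)\pmod{2} \qquad \text{for every arc $h\to j$ of $\T$.}
\]

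The main obstacle is matching the two systems of arc conditions. The linchpin is the easy but crucial observation that $h\in I_h$ always (since the rotation $w^{(h-1)}$ ends at $h-1$, so $h$ is automatically a cyclic left descent), which converts freely between $|I_h\cap[h,j-1]|$ and $|I_h\cap[h+1,j-1]|$. The $\Rightarrow$ direction then proceeds by specializing the hypersimplex inequality at $r=h$ to constrain $|I_h\cap[h+1,j-1]|\in\{\area-1,\area\}$, and then ruling out the value $\area-1$ by propagating through the triangulation: adjacent arcs of $\T$ differ in area by exactly one (as one crosses a single black or white triangle), so the hypersimplex bounds at neighboring arcs force a consistent parity at every arc. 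The $\Leftarrow$ direction runs the same bookkeeping in reverse, using the parity data at each arc together with the combinatorial constraint that $|I_r\cap[h,j-1]|$ cannot increase by more than one when $j$ or $r$ shifts by one, and invoking the twisted cyclic symmetry of Remark~\ref{rem:twisted} (equivalently, computing $\sgn\langle YZ_hZ_j\rangle$ using rotations $a\ne h$) to extend the $r=h$ analysis to all $r\in[n]$. I expect this arc-by-arc propagation over the tree dual to $\T$ to carry the bulk of the argument, with the rest being book-keeping that is essentially forced once the sign computation above is in hand.
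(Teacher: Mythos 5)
Your reduction to arc-indexed conditions via Theorem~\ref{cor:GTsInBijection} is the right strategy, and the observation that $h\in I_h$ is correct and useful. The sign computation on the amplituhedron side (anchoring at $\langle Y Z_h Z_{h+1}\rangle>0$ and counting flips in $I_h\setminus\{h\}$) is also the right idea, though the positivity of the anchor should be cited from Theorem~\ref{thm:main1} rather than Lemma~\ref{lem:boundary}, which is stated only for $Y = CZ$ with $C$ totally positive.

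However, there is a genuine gap in the equivalence step, which is the mathematical heart of the proposition. The hypersimplex side reduces to the system: $|I_r\cap[h,j-1]|\in\{\area(h\to j),\,\area(h\to j)+1\}$ for \emph{every} arc $h\to j$ of $\T$ and \emph{every} $r\in[n]$. The amplituhedron side, as you formulate it, reduces to a parity constraint involving only the single set $I_h$ per arc: $|I_h\cap[h+1,j-1]|\equiv\area(h\to j)\pmod 2$. These two systems are not visibly equivalent. Specializing the hypersimplex condition at $r=h$ yields $|I_h\cap[h+1,j-1]|\in\{\area-1,\area\}$, which carries no parity information; the constraints at $r\neq h$ concern $I_r$, not $I_h$, and do not directly pin down the value at $r=h$. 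The bridges you propose --- ``propagating through the triangulation'' for $(\Rightarrow)$ and ``twisted cyclic symmetry'' for $(\Leftarrow)$ --- name the desired conclusion rather than supply an argument for it. What is missing is an explicit combinatorial relation among the sets $I_1(w),\ldots,I_n(w)$, e.g.\ a description of how $\cdes(w^{(r)})$ changes as $r$ increments, or equivalently the circuit relation satisfied by the vertices $e_{I_1},\ldots,e_{I_n}$ of the unimodular simplex $\simp{w}$. Only with such a lemma in hand can one show that the constraints for all $r$ are determined by the $r=h$ data together with parity, and conversely. Without it, both directions of the equivalence remain unproved.
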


Figures \ref{fig:T1} and \ref{fig:T2} 
illustrate the fact that the two positroid 
tilings of $\mathcal{A}_{4,1,2}(Z)$ are related to the 
two positroid tilings of $\Delta_{2,4}$ by T-duality.
The following result, first conjectured in \cite[Conjecture 6.9]{LPW}, 
generalizes this example to arbitrary $k$ and $n$.

\begin{theorem} 
[{\cite{PSBW}}]
\label{conj:triangCorresp} 
  A collection $\{\Gamma_\pi\}$ of positroid polytopes in $\Delta_{k+1, n}$ gives a positroid tiling of $\Delta_{k+1, n}$ if and only if for all $Z \in \Mat_{n,k+2}^{>0}$, the collection $\{\gt{\td{\pi}}\}$ of Grasstopes gives a positroid tiling of $\A_{n, k, 2}(Z)$.
\end{theorem}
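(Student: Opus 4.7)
My approach will use Stanley's $w$-simplices $\simp{w}$ (covering $\Delta_{k+1,n}$) and the $w$-chambers $\asimp{w}$ (covering $\A_{n,k,2}(Z)$), both indexed by $w \in D_{k+1,n}$, as a common combinatorial refinement of both sides of the equivalence, and translate the tiling condition on each side into a partition condition on this index set via \cref{prop:simplexContainment}.

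The first step is to reformulate both notions of tiling in terms of partitions of $D_{k+1,n}$. A collection $\{\Gamma_\pi\}_{\pi \in \mathcal{C}}$ of positroid polytopes is a positroid tiling of $\Delta_{k+1,n}$ if and only if the sets $B_\pi := \{w : \simp{w} \subseteq \Gamma_\pi\}$ give a partition $D_{k+1,n} = \bigsqcup_{\pi \in \mathcal{C}} B_\pi$; this uses that Stanley's triangulation refines any positroid tiling (hinted at in the example following \cref{defn:wsimplexHSimplex}), together with $\dim \simp{w} = \dim \Gamma_\pi = n-1$ to force each $\simp{w}$ into a unique $\Gamma_\pi$. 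Analogously, for fixed $Z \in \Mat_{n,k+2}^{>0}$, I will combine \cref{thm:wSimpCover} with the $2k$-dimensionality of each nonempty open chamber $\asimpo{w}$ to show that $\{\gt{\td{\pi}}\}$ is a positroid tiling of $\A_{n,k,2}(Z)$ if and only if the sets $\hat{B}_\pi(Z) := \{w : \asimp{w} \subseteq \gt{\td{\pi}}\}$ partition $\{w \in D_{k+1,n} : \asimpo{w} \neq \emptyset\}$.

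With this reformulation in hand, the forward direction is immediate: by \cref{prop:simplexContainment}, whenever $\asimpo{w} \neq \emptyset$ we have $\simp{w} \subseteq \Gamma_\pi \iff \asimp{w} \subseteq \gt{\td{\pi}}$, so $\hat{B}_\pi(Z) = B_\pi \cap \{w : \asimpo{w} \neq \emptyset\}$ and the partition transfers for every $Z$. For the reverse direction I argue by contrapositive: if $\{\Gamma_\pi\}$ fails to tile $\Delta_{k+1,n}$, some $\simp{w}$ either lies in no $\Gamma_\pi$ or lies in two distinct tiles $\Gamma_\pi,\Gamma_{\pi'}$. Using the existence results cited immediately after \cref{def:ampchamber}, I pick $Z$ with $\asimpo{w} \neq \emptyset$; \cref{prop:simplexContainment} then transfers this failure to $\A_{n,k,2}(Z)$, contradicting the hypothesis that $\{\gt{\td{\pi}}\}$ is a tiling for every $Z$.

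The main obstacle is justifying the refinement claims underlying the reformulation: that every positroid tile $\Gamma_\pi \subseteq \Delta_{k+1,n}$ decomposes as a union of Stanley $w$-simplices, and its amplituhedron analogue that every tile $\gt{\td{\pi}}$ is a union of closed $w$-chambers. Once these refinement statements are in place, \cref{prop:simplexContainment} carries the essential geometric content and the theorem reduces to transporting a partition of $D_{k+1,n}$ along the T-duality index map $\pi \mapsto \td{\pi}$ of \cref{hatmap}. The $Z$-dependence enters only mildly, via the selection of a single $Z$ per $w$ in the reverse direction.
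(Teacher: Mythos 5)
Your proposal matches the strategy the paper itself outlines: it explicitly identifies Proposition~\ref{prop:simplexContainment}, together with the facts that positroid tiles of $\Delta_{k+1,n}$ are unions of $w$-simplices and positroid tiles of $\A_{n,k,2}(Z)$ are unions of $w$-chambers, as ``the key to proving that positroid tilings of $\Delta_{k+1,n}$ and $\A_{n,k,2}(Z)$ are in bijection,'' and you implement exactly this reduction to a partition of $D_{k+1,n}$, invoking Theorem~\ref{thm:wSimpCover} for coverage and the realizability of each $\asimpo{w}$ for an appropriate $Z$ in the reverse direction.

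Two small points to tighten. First, in the reverse direction your contrapositive only lists two failure modes (an uncovered $\simp{w}$, or a doubly-covered $\simp{w}$), but a collection of positroid polytopes can also fail to be a positroid tiling because some $\Gamma_\pi$ is not a positroid \emph{tile} (lower-dimensional or the moment map non-injective on $S_\pi$); this case should be dispatched separately via Theorem~\ref{cor:GTsInBijection}, which says T-duality carries tiles to tiles, so the same defect appears on the $\A_{n,k,2}(Z)$ side for every $Z$. Second, your reformulation silently conflates the open image $\mu(S_\pi)$ (resp.\ $\tilde{Z}(S_{\td\pi})$) appearing in Definition~\ref{def:tri0} with the relative interior of $\Gamma_\pi$ (resp.\ of $\gt{\td\pi}$); for the partition-of-$D_{k+1,n}$ reformulation to be a genuine equivalence with ``positroid tiling'' one needs that the open chambers $\asimpo{w}$ (and relative interiors of $\simp{w}$) land in the appropriate open images, which requires a word of justification rather than just the dimension count you mention.
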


In light of the fact that $\Delta_{k+1,n}$ is an $(n-1)$-dimensional polytope,
and $\mathcal{A}_{n,k,2}(Z)$ is a $2k$-dimensional non-polytopal 
subset of $\Gr_{k,k+2}$, we find \cref{conj:triangCorresp} very surprising!

We believe that more generally, \cref{conj:triangCorresp}
extends to give a bijection between positroid dissections (respectively, subdivisions)
of $\Delta_{k+1,n}$ and positroid dissections (respectively, subdivisions) of $\A_{n,k,2}(Z)$,
see \cite[Conjectures 6.9 and 8.8]{LPW}.

Given that $\Trop^+ \Gr_{k+1,n}$ controls the regular positroid subdivisions of $\Delta_{k+1,n}$,
which are (conjecturally) in bijection with positroid subdivisions of $\A_{n,k,2}(Z)$, it is natural
to ask:  can we make a direct connection between $\Trop^+ \Gr_{k+1,n}$ and $\A_{n,k,2}(Z)$?
Is there a way to think of points of $\Trop^+\Gr_{k+1,n}$ as giving ``height functions''
for $\A_{n,k,2}(Z)$?



\section{The amplituhedron and cluster algebras}\label{sec:cluster}

\emph{Cluster algebras} are a remarkable class of commutative
rings introduced by Fomin and Zelevinsky \cite{ca1, FominICM}, see also
\cite{FWZ}.
Many homogeneous coordinate rings of ``nice'' algebraic varieties have a cluster algebra
structure, including the Grassmannian 
 \cite{Scott}.
Starting in 2013, various authors connected scattering amplitudes
of planar $\mathcal{N}=4$ 
 super Yang-Mills theory
to cluster algebras
\cite{Golden:2013xva, Drummond:2017ssj, Lukowski:2019sxw, PSBW}.
In this section we explain several connections between 
the amplituhedron $\A_{n,k,m}(Z) \subset \Gr_{k, k+m}$ and the cluster algebra structure
on $\Gr_{m,n}$.  

\subsection{Cluster adjacency for facets of positroid tiles}

{Facets} of positroid tiles
in $\mathcal{A}_{n,k,m}(Z)$ are related
to the cluster  structure on $\Gr_{m,n}$.

\begin{defn}\label{def:facetG}
 Let $\gt{\pi}$ be a Grasstope of
 $\mathcal{A}_{n,k,m}(Z)$. We say that $\gt{\pi'}$ is a 
	\emph{facet} of $\gt{\pi}$ if it is maximal by inclusion among the Grasstopes satisfying the following three properties:
$\gt{\pi'} \subset \partial \gt{\pi}$; 
the cell $S_{\pi'}$ is contained in $\overline{S_\pi}$; 
and 
$\gt{\pi'}$ has codimension 1 in $\gt{\pi}$.
\end{defn}

Recall from \cite{ca1, FWZ}
that 
the cluster variables for 
$\Gr_{2,n}$ 
are the Pl\"ucker coordinates $p_{ij}$, and a collection of 
 Pl\"ucker coordinates is \emph{compatible} if the 
corresponding diagonals in an $n$-gon are noncrossing.  When $m=2$,
we have the following theorem (whose first paragraph 
is the \emph{cluster adjacency conjecture}
from \cite{Lukowski:2019sxw}).

\begin{theorem} [{\cite[Theorem 9.12]{PSBW}}] \label{thm:adj}
  Let $Z_{\hat{G}(\T)}$ be a positroid tile of $\mathcal{A}_{n,k,2}(Z)$.
Each facet lies on a hypersurface
$\langle Y Z_ i Z_j\rangle=0$,
and the collection of Pl\"ucker coordinates
       $\{p_{ij}\}_{\hat{G}(\T)}$
       corresponding to facets is a collection of compatible cluster variables
for $\Gr_{2,n}$.

       If $p_{hl}$ is
        compatible with
       $\{p_{ij}\}_{\hat{G}(\T)}$,  then
        $\lrangle{Y Z_h Z_l}$  has a fixed sign on $Z^{\circ}_{\hat{G}(\T)}$.
\end{theorem}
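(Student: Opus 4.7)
The plan is to leverage the explicit inequality description of positroid tiles from Theorem~\ref{cor:GTsInBijection}, which cuts out $Z_{\hat{G}(\T)}$ by the inequalities $(-1)^{\area(h\to j)}\langle YZ_hZ_j\rangle\ge 0$ as $h\to j$ ranges over the arcs of $\T$. Every facet of the tile must lie on at least one of the bounding hypersurfaces $\{\langle YZ_hZ_j\rangle = 0\}$, so part~(1) reduces to showing that each arc of $\T$ contributes a nonredundant inequality (i.e., a genuine facet). I would establish this by a dimension/degeneration argument: for each arc $h\to j$, degenerating the edge weight of $\hat{G}(\T)$ dual to that arc produces a $1$-parameter family in $S_{\hat{G}(\T)}$ whose image under $\tilde{Z}$ approaches the hypersurface $\{\langle YZ_hZ_j\rangle=0\}$ while keeping all other arc-inequalities strict. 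Together with the counting identity that the number of arcs of $\T$ equals the expected number of facets of the $2k$-dimensional tile, this pins down the facets and gives part~(1).

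For part~(2), since $\T$ is a bicolored triangulation of the $n$-gon $\mathbf{P}_n$, its arcs form a noncrossing collection of diagonals. In the Fomin--Zelevinsky cluster structure on $\Gr_{2,n}$, the non-frozen cluster variables are exactly the Pl\"ucker coordinates $p_{ij}$ for diagonals $\{i,j\}$, and two such are compatible if and only if the diagonals do not cross, while frozens $p_{i,i+1}$ are compatible with everything. Hence $\{p_{ij}\}_{\hat{G}(\T)}$ is automatically a compatible collection of cluster variables.

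For part~(3), suppose $p_{hl}$ is compatible with $\{p_{ij}\}_{\hat{G}(\T)}$. If $\{h,l\}=\{i,i+1\}$, then $\langle YZ_hZ_l\rangle$ has a fixed sign on all of $\mathcal{A}_{n,k,2}(Z)$ by Lemma~\ref{lem:boundary}. Otherwise the diagonal $\{h,l\}$ does not cross any arc of $\T$, so $h$ and $l$ are vertices of a single polygonal face $P$ of the bicolored subdivision $\overline{\T}$. I would then refine $\T$ to a bicolored triangulation $\T'$ whose arc set contains $\{h,l\}$, by triangulating $P$ through the diagonal $\{h,l\}$ using further diagonals internal to $P$. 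Since all added diagonals separate like-colored triangles, $\overline{\T'}=\overline{\T}$, so $\hat{G}(\T')$ and $\hat{G}(\T)$ are move-equivalent and $Z_{\hat{G}(\T')}=Z_{\hat{G}(\T)}$. Applying Theorem~\ref{cor:GTsInBijection} to $\T'$ yields the additional inequality $(-1)^{\area'(h\to l)}\langle YZ_hZ_l\rangle\ge 0$ on the entire tile, with equality only on the facet corresponding to $h\to l$; on the open tile $Z^\circ_{\hat{G}(\T)}$ this inequality is strict, giving the desired constant sign.

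The main obstacle is the facet identification in the first step: showing that every defining inequality in Theorem~\ref{cor:GTsInBijection} is essential and corresponds bijectively to a genuine codimension-one facet of $Z_{\hat{G}(\T)}$. This is the point at which the combinatorics of the plabic graph $\hat{G}(\T)$, the bicolored triangulation, and the boundary measurement parameterization of the underlying positroid cell must be carefully tied together; the subsequent steps are then largely combinatorial consequences.
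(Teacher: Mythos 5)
The main gap is in your part (1), and it propagates into part (3). You claim that proving the statement reduces to showing that \emph{every} arc of $\T$ cuts out a genuine facet of $Z_{\hat G(\T)}$, and you propose to check this by a degeneration argument plus a ``counting identity'' equating the number of arcs of $\T$ with the number of facets. That is false. Consider $n=5$, $k=1$, with black triangle $\{1,2,3\}$ and white triangles $\{1,3,4\},\{1,4,5\}$. The tile is the triangle with vertices $Z_1,Z_2,Z_3$ in $\mathbb{P}^2$, which has exactly three facets, lying on $\langle YZ_1Z_2\rangle=0$, $\langle YZ_2Z_3\rangle=0$, $\langle YZ_1Z_3\rangle=0$. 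Meanwhile $\T$ has seven arcs. The arcs $\{1,4\}$ and $\{3,4\}$ (for instance) do \emph{not} give facets: on the triangle, $\langle YZ_3Z_4\rangle$ and $(-1)\langle YZ_1Z_4\rangle$ are strictly positive except at a single vertex, so no one-parameter degeneration can drive them to zero while keeping the other inequalities strict. The same phenomenon occurs for arcs interior to a black polygon of $\overline\T$: for $n=4$, $k=2$, the tile is all of $\Gr_{2,4}^{\ge0}$, and $\langle YZ_1Z_3\rangle=p_{13}=0$ does not give a facet because the Pl\"ucker relation forces other Pl\"uckers to vanish with it. So the facet-defining arcs are a proper subset of the arcs of $\T$ (roughly, the arcs of the bicolored subdivision $\overline\T$ that bound a black polygon), and any correct argument must isolate exactly this subset. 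Your degeneration/counting plan cannot do so as written.

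This feeds into your part (3). You argue that if $p_{hl}$ is compatible with the facet collection then ``$\{h,l\}$ does not cross any arc of $\T$,'' and that you can therefore refine $\T$ to include $\{h,l\}$. Neither half is right as stated. In the $n=5$ example above, $p_{35}$ is compatible with the facet collection $\{p_{12},p_{23},p_{13}\}$, yet the diagonal $\{3,5\}$ crosses the arc $\{1,4\}$ of $\T$. What is true is that $\{h,l\}$ does not cross any arc of $\overline\T$, so $h$ and $l$ lie in a common polygon $P$ of $\overline\T$; but then you must \emph{re-triangulate} $P$ (choosing a different $\T'$ with $\overline{\T'}=\overline\T$ whose arc set contains $\{h,l\}$), not ``refine $\T$,'' since $\{h,l\}$ may cross arcs of the existing triangulation of $P$. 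With that correction the idea goes through: $Z_{\hat G(\T')}=Z_{\hat G(\T)}$ since $\overline{\T'}=\overline\T$, and Theorem~\ref{cor:GTsInBijection} applied to $\T'$ gives the inequality $(-1)^{\mathrm{area}'(h\to l)}\langle YZ_hZ_l\rangle\ge0$, hence the fixed sign on the open tile. (Alternatively one can appeal to Theorem~\ref{thm:clustvar} directly: $x_{hl}$ is a cluster variable of $\mcv_{\overline\T}$, hence positive on $\mcv_{\overline\T}^{>0}=Z^\circ_{\hat G(\overline\T)}$.) Your part (2) is fine once the facet collection is correctly identified: any subset of the arcs of $\T$ is automatically noncrossing, hence a compatible collection of $\Gr_{2,n}$ cluster variables. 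But you still need a genuine argument, at the level of the positroid stratification and Definition~\ref{def:facetG}, to determine which arcs actually produce codimension-one boundary cells of $S_{\hat G(\T)}$ whose Grasstopes are facets; that identification is the real content of the theorem and is missing from the proposal.
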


For $m>2$ 
the Grassmannian $\Gr_{m,n}$ has infinitely many cluster variables,
each of which 
can be written as a polynomial
 $Q(p_I)$ in the ${n \choose m}$ Pl\"ucker coordinates.
Meanwhile, each facet of a positroid tile  of the amplituhedon $\A_{n,k,m}(Z)$
lies on a hypersurface defined by the vanishing of some 
polynomial $Q(\langle Y Z_I \rangle)$ in the ${n \choose m}$ 
twistor coordinates 
$\lrangle{Y Z_I}_{I \in {[n] \choose m}}$.  

\begin{conj}[
	{\cite[Conjecture 6.2]{PSBW}}]
	\label{conj:cluster} 
Let $Z_{\pi}$ be a positroid tile of 
	$\A_{n,k,m}(Z)$ and let
\begin{equation*}
\Facet(Z_{\pi}):=\{Q(p_I) \ \vert \ 
               \text{a facet of $Z_{\pi}$ lies on
               the hypersurface $Q(\lrangle{Y Z_I})=0$}\},
\end{equation*}
       where $Q$ is a polynomial in the ${n \choose m}$ Pl\"ucker coordinates.
 Then
		 each $Q\in \Facet(Z_{\pi})$ is a cluster variable for $\Gr_{m,n}$, and 
                 $\Facet(Z_{\pi})$ consists of compatible cluster variables.
        Moreover if $\tilde Q$ is a cluster variable compatible with
        $\Facet(Z_{\pi})$,
        the polynomial $\tilde Q(\lrangle{YZ_I})$ in twistor coordinates has a fixed sign
        on $Z^{\circ}_{\pi}$.
\end{conj}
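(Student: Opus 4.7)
The plan is to generalize the proof of \cref{thm:adj}, which handled $m=2$ using the explicit description of positroid tiles $Z_{\hat{G}(\T)}$ together with the fact that their facets are cut out by twistor coordinates $\langle YZ_iZ_j\rangle$ indexed by noncrossing diagonals of an $n$-gon---equivalently, by a compatible collection of Pl\"ucker cluster variables in $\Gr_{2,n}$. Since the classification of positroid tiles is open for $m>2$ (see the remark after \cref{thm:allGTs}), I would first attack the best-understood family: BCFW tiles in $\mathcal{A}_{n,k,4}(Z)$, for which a recursive construction is available from \cite{arkani-hamed_trnka, ELT}.

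First, for a BCFW tile $Z_\pi$, I would identify each facet with a codimension-one boundary of $S_\pi$ in the closure order on positroid cells, and use the expansion
\begin{equation*}
\langle CZ, Z_{i_1}, \ldots, Z_{i_m}\rangle = \sum_{J=\{j_1<\cdots<j_k\}} p_J(C)\,\langle Z_{j_1}, \ldots, Z_{j_k}, Z_{i_1}, \ldots, Z_{i_m}\rangle
\end{equation*}
to extract an explicit polynomial $Q(\langle YZ_I\rangle)$ vanishing on the facet's image. I would then try to match $Q(p_I)$ against cluster variables of $\Gr_{m,n}$ produced by a recursive mutation sequence mirroring the BCFW recursion; compatibility of the resulting collection $\Facet(Z_\pi)$ should follow from the combinatorial structure of the BCFW tree matching a valid seed of $\Gr_{m,n}$.

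For the sign-stability claim, I would argue by connectedness: $Z^\circ_\pi$ is connected as the image of the connected cell $S_\pi$, and by the previous step its boundary in $\mathcal{A}_{n,k,m}(Z)$ lies on the vanishing loci of the finitely many cluster variables in $\Facet(Z_\pi)$. If a compatible cluster variable $\tilde{Q}$ had $\tilde{Q}(\langle YZ_I\rangle)$ changing sign on $Z^\circ_\pi$, then by continuity its zero locus would meet the open tile in a codimension-one set; one would then aim to derive a contradiction either with the maximality clause in \cref{def:facetG}, or (in the style of the proof of \cref{thm:adj}) by exhibiting this locus as a boundary component forced into $\Facet(Z_\pi)$ via a cluster exchange relation connecting $\tilde{Q}$ to an element of $\Facet(Z_\pi)$.

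The main obstacle is the identification of facet polynomials with cluster variables. For $m\geq 3$ the cluster algebra of $\Gr_{m,n}$ is typically of infinite mutation type, so the target list of cluster variables is combinatorially complex and not explicitly enumerated; moreover, outside the BCFW family there is no classification of positroid tiles for $m>2$, so even writing down facet polynomials is out of reach in general. A realistic intermediate target is thus the BCFW tiles of $\mathcal{A}_{n,k,4}(Z)$, where the recursion should match a corresponding recursion on the cluster side, with base cases (small $n,k$) computed by hand against explicit seeds of $\Gr_{4,n}$. The fully general conjecture likely requires new structural input: a BCFW-style recursion for even $m$ compatible with a recursion among clusters of $\Gr_{m,n}$, with the $m=2$ theorem serving as a base case whose cluster content propagates upward through an operation yet to be identified.
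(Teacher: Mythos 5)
This statement is a \emph{conjecture} (labeled \texttt{conj} in the source and attributed to \cite[Conjecture 6.2]{PSBW}); the paper offers no proof of it, only the $m=2$ case in \cref{thm:adj}. So there is no proof in the paper to compare your proposal against. What you have written is not a proof either—it is a research outline—and you are candid about this, but let me flag the specific places where the sketch would need real new content before it could become one.

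The central gap is that your sign-stability argument is essentially circular. You propose: if a compatible cluster variable $\tilde Q$ changed sign on $Z^{\circ}_{\pi}$, then its zero locus would meet the open tile in codimension one, which you hope to contradict via \cref{def:facetG} or a cluster exchange relation. But \cref{def:facetG} defines facets as images $\gt{\pi'}$ of certain boundary cells $S_{\pi'}\subset\overline{S_\pi}$; a hypersurface $\{\tilde Q(\lrangle{YZ_I})=0\}$ passing through the interior of $Z^{\circ}_{\pi}$ is a priori \emph{not} of that form and hence is not forced to be a facet. Ruling out precisely such interior sign changes is the nontrivial content of the final clause of the conjecture, so invoking the facet definition to preclude them assumes what must be shown. (In the $m=2$ proof of \cref{thm:adj}, the interior sign-stability comes from the explicit cluster torus structure of \cref{thm:clustvar}, where the tile is realized as the positive part of $\mcv_{\overline{\T}}$ and compatible Pl\"uckers become Laurent monomials with positive coefficients in a cluster—this positivity input is what you would need an analogue of.) Your first two steps have a matching gap: identifying the facet polynomial $Q$ and then ``matching'' it against a cluster variable of $\Gr_{m,n}$ via a recursion ``mirroring BCFW'' is exactly the open problem, and the expansion \eqref{eq:expand} of twistor coordinates in Pl\"uckers of $C$ does not by itself produce a polynomial in the $\binom{n}{m}$ twistor coordinates cutting out a facet. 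Restricting to BCFW tiles in $\A_{n,k,4}(Z)$ is a sensible intermediate target, but the key structural input—an explicit cluster seed for $\Gr_{4,n}$ attached to each BCFW tile, with facets as frozen variables, in the spirit of \cref{thm:clustvar}—is still missing from the proposal.
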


\subsection{Positroid tiles as totally positive parts of cluster varieties}

 Following \cite[Section 6.2]{PSBW}, 
we now build a cluster variety $\mcv_{\overline{\T}}$ in $\Gr_{k, k+2}(\CC)$ for each positroid tile $\gt{\hatG(\overline{\T})}$ of $\A_{n, k, 2}(Z)$. 
The positroid tile
$\gto{\hatG(\overline{\T})}$
is exactly the \emph{totally positive part} of $\mcv_{\overline{\T}}$
(in the sense of \cite{FominICM}).

Fix a bicolored subdivision $\overline{\T}$ of type $(k,n)$, with black polygons $P_1, \dots, P_r$. 
Let $\mathcal{S}(\overline{\mathcal{T}})$ 
denote the set of all bicolored triangulations
 represented by $\overline{\mathcal{T}}$.
For each black polygon $P_i$, fix a
\emph{distinguished boundary arc}  $h_i \to j_i$ with $h_i < j_i$ in the boundary of $P_i$. 
   We will build $\mcv_{\overline{\T}}$ by defining seeds in the field of rational functions on $\Gr_{k, k+2}(\CC)$.

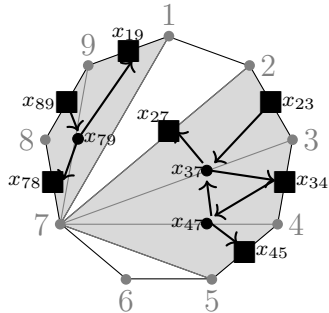
\begin{figure}[h]
	\centering
	\resizebox{1.9in}{!}{
\begin{tikzpicture}
	\node[draw, minimum size=3.3cm, regular polygon, regular polygon sides=9] (a) {};
	\node[draw=none, minimum size=3cm, regular polygon, regular polygon sides=9] (t) {};
	 \filldraw[fill=gray!30!white](a.corner 1)--(a.corner 2)--(a.corner 3)--(a.corner 4)--(a.corner 1);
	 \filldraw[fill=gray!30!white](a.corner 4)--(a.corner 6)--(a.corner 7)--(a.corner 8)--(a.corner 9)--(a.corner 4);
	\foreach \x in {1,2,...,9}
	\fill[fill=gray] (a.corner \x)  circle[radius=2pt];
	\draw[gray,thin](a.corner 1)--(a.corner 4);
	\draw[gray,thin](a.corner 9)--(a.corner 4);
	\draw[gray,thin](a.corner 6)--(a.corner 4);
	\draw[gray,thin](a.corner 2)--(a.corner 4);
	\draw[gray,thin](a.corner 7)--(a.corner 4);
	\draw[gray,thin](a.corner 8)--(a.corner 4);
	 \node[shift=(a.corner 1), anchor=south, gray] {$1$};
	 \node[shift=(a.corner 2),anchor=south, gray] {$9$};
	 \node[shift=(a.corner 3),anchor=east, gray] {$8$};
	 \node[shift=(a.corner 4),anchor=east, gray] {$7$};
	 \node[shift=(a.corner 5),anchor=north, gray] {$6$};
	 \node[shift=(a.corner 6),anchor=north, gray] {$5$};
	 \node[shift=(a.corner 7),anchor=west, gray] {$4$};
	 \node[shift=(a.corner 8),anchor=west, gray] {$3$};
	 \node[shift=(a.corner 9),anchor=west, gray] {$2$};
	 \draw[fill=black] (-1.2,.3) circle (2pt);
	 \draw[fill=black] (.5,-.12) circle (2pt);
	 \draw[fill=black] (.5,-.82) circle (2pt);
	\draw[black, thick, ->] (-1.2,.3)--(t.side 1);
	\draw[black, thick, ->] (-1.2,.3)--(t.side 3);
	\draw[black, thick, ->] (a.side 2)--(-1.2,.4);
	\draw (0,.4) node  {$\blacksquare$};
	\draw[black, thick, ->] (a.side 8)--(.6,-.02);
	\draw[black, thick, ->] (.45, -.02)--(0.1,.4);
	\draw[black, thick, ->] (.55, -.12)--(t.side 7);
	\draw[black, thick, ->] (t.side 7)--(.6,-.72);
	\draw[black, thick, ->] (.56,-.72)--(.5,-.25);
	\draw[black, thick, ->] (.56,-.82)--(t.side 6);
	\node[shift=(a.side 1), black]{$\blacksquare$};
	\node[shift=(a.side 2), black]{$\blacksquare$};
	\node[shift=(a.side 3), black]{$\blacksquare$};
	\node[shift=(a.side 6), black]{$\blacksquare$};
	\node[shift=(a.side 7), black]{$\blacksquare$};
	\node[shift=(a.side 8), black]{$\blacksquare$};
	\draw (-.9,.3) node  {\tiny $x_{79}$};
	\draw (-.2,.6) node  {\tiny $x_{27}$};
	\draw (.25,-.12) node  {\tiny $x_{37}$};
	\draw (.25,-.82) node  {\tiny $x_{47}$};
	 \node[shift=(a.side 1), anchor=south, black]{\tiny $x_{19}$};
	 \node[shift=(a.side 2), anchor=east, black]{\tiny $x_{89}$};
	 \node[shift=(a.side 3), anchor=east, black]{\tiny $x_{78}$};
	 \node[shift=(a.side 6), anchor=west, black]{\tiny $x_{45}$};
	 \node[shift=(a.side 7), anchor=west, black]{\tiny $x_{34}$};
	 \node[shift=(a.side 8), anchor=west, black]{\tiny $x_{23}$};
\end{tikzpicture}
	}
  \caption{\label{fig:seed} In gray, a bicolored triangulation $\T$. In black, the seed $\Sigma_\T$. The distinguished boundary arcs are 
	$1 \to 7$ and $5 \to 7$.}
\end{figure}


\begin{defn}[Cluster variables]
        Let $a \to b$ with $a<b$ be an arc which is contained in a black polygon $P_i$ and is not the distinguished boundary arc $h_i \to j_i$. We define
	\[x_{ab}:=\frac{(-1)^{\area(a \to b)}\lrangle{Y Z_a Z_b}}{(-1)^{\area(h_i\to j_i)}\lrangle{Y Z_{h_i} Z_{j_i}}},\]
        which is a rational function on $\Gr_{k, k+2}(\CC)$.
\end{defn}

\begin{defn}[Seeds]
Let $\T \in \mathcal{S}(\overline{\mathcal{T}})$.
	We define the {quiver} $Q_{\T}$  as follows:
        \begin{itemize}
		\item Place a frozen vertex on each non-distinguished boundary arc of $P_1, \dots, P_r$ and a mutable vertex on every other arc 
			(a ``black arc'') bounding 
			a triangle of $\T$. 
                \item If arcs $a \to b$, $b \to c$, $c \to a$ form a triangle, we put arrows in $Q$ between the corresponding vertices, going clockwise around the triangle.
        \end{itemize}

        We label the vertex of $Q_\T$ on arc $a \to b$ of $\T$ with the function $x_{ab}$. The collection of vertex labels is the \emph{(extended) cluster} $\textbf{x}_{\T}$. The pair
        $(Q_{\T},\textbf{x}_\T)$
        is the \emph{seed} $\Sigma_\T$.
\end{defn}

See \cref{fig:seed} for an example.
Note that 
the cluster $\mathbf{x}_\T$ has size $2k$.


\begin{thm}
	[{\cite[Section 6.2]{PSBW}}]
	\label{thm:clustvar}
Let $\T \in \mathcal{S}(\overline{\mathcal{T}})$.
        Then 
        \[\mcv_\T:= \left \{Y \in \Gr_{k, k+2}(\CC):  \prod_{a \to b \text{ black arc of }\T}\lrangle{Y Z_a Z_b} \neq 0 \right\}\]
         is birational to an algebraic torus of dimension $2k$, and 
	 its 
        field of rational functions is 
        the field 
	$\CC(\mathbf{x}_\T)$
	of rational functions in the cluster $\mathbf{x}_\T$.

	The set
$		\mcv_{\overline{\T}}:= 
	\bigcup_{\T \in \mathcal{S}(\overline{\mathcal{T}})}
	\mcv_\T$
        is a cluster variety in $\Gr_{k, k+2}(\CC)$.
	In particular, 
if 
	$\T$ and $\T'$ are related by flipping  arc $a \to b$, 
	 seeds $\Sigma_\T$ and $\Sigma_{\T'}$ are related by mutation at $x_{ab}$.  
	\[\text{ The positive part }\mcv_{\overline{\T}}^{>0}:=\{Y \in \mcv_{\overline{\T}}: x_{ab}(Y)>0 \text{ for all cluster variables } x_{ab}\}\]
	of the cluster variety $\mcv_{\overline{\T}}$ 
        is equal to the positroid tile $\gto{\hatG(\overline{\T})}$.
\end{thm}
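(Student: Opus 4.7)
The plan is to prove the four assertions of Theorem \ref{thm:clustvar} in the natural order: first establish that $\mathbf{x}_\T$ consists of $2k$ algebraically independent rational functions on $\mathcal{V}_\T$ and gives a birational parametrization by a torus; then verify the mutation relations; then glue the pieces via the $\mathcal{S}(\overline{\mathcal{T}})$-collection of seeds into a cluster variety; and finally identify the positive part with $\gto{\hatG(\overline{\T})}$ using the inequality description in Theorem \ref{cor:GTsInBijection}.

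For the dimension count, I would argue as follows. If $\overline{\T}$ has black polygons $P_1,\dots,P_r$ with $s_i$ sides each, then $\sum_i(s_i-2)=k$ (the total number of black triangles in any refining triangulation $\T$), so $\sum_i s_i=k+2r$. Each $P_i$ contributes $s_i-3$ internal diagonals (mutable vertices), $s_i-1$ non-distinguished boundary arcs (frozen vertices), and $1$ distinguished boundary arc, for a total cluster size of $\sum_i(2s_i-4)=2k=\dim\Gr_{k,k+2}$. To get a birational parametrization $(\mathbb{C}^*)^{2k}\to\mathcal{V}_\T$, I would use the twistor coordinates $\lrangle{YZ_aZ_b}$ attached to black arcs, noting that there are $2k+r$ such twistors and $r$ homogeneity conditions (one per distinguished arc), leaving $2k$ independent ratios $x_{ab}$. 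Algebraic independence can be checked by exhibiting a single point of $\mathcal{V}_\T$ at which the Jacobian of $(x_{ab})$ on an affine chart of $\Gr_{k,k+2}$ is nonzero — the totally positive regime provides such a point.

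The mutation step reduces to a single local computation. If $\T$ and $\T'$ differ by flipping a diagonal $a\to b$ of two adjacent triangles in some $P_i$ (with vertices $a,b,c,d$ in convex position) to $c\to d$, the three-term Grassmann-Plücker relation in $\Gr_{k,k+2}$ applied to rows $y_1,\ldots,y_k,Z_a,Z_b,Z_c,Z_d$ gives a quadratic identity among $\lrangle{YZ_aZ_b}$, $\lrangle{YZ_cZ_d}$ and the four ``boundary" twistors of the quadrilateral. Dividing by the appropriate power of $\lrangle{YZ_{h_i}Z_{j_i}}$ and using that the area parities of $a\to b$ and $c\to d$ differ from those of their neighbors in the way dictated by a flip, I would obtain an exchange relation of the form $x_{ab}x_{cd}=M_+ + M_-$, where $M_\pm$ are the monomials in cluster variables prescribed by the arrows of $Q_\T$ entering and leaving the flipped vertex. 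Checking the quiver mutation rule then reduces to a combinatorial check on the local picture of two triangles glued along an edge.

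The main obstacle will be the final assertion that $\mathcal{V}_{\overline{\T}}^{>0}=\gto{\hatG(\overline{\T})}$. The containment $\gto{\hatG(\overline{\T})}\subseteq\mathcal{V}_{\overline{\T}}^{>0}$ follows directly from Theorem \ref{cor:GTsInBijection}(2): inside the open tile all twistors $\lrangle{YZ_aZ_b}$ on arcs of $\T$ are nonzero with signs $(-1)^{\area(a\to b)}$, so each $x_{ab}$ is strictly positive. The reverse inclusion is more delicate, since the tile is defined by inequalities over \emph{all} arcs of $\T$ — both those internal to black polygons (which directly correspond to cluster variables) and those on the boundary between black and white regions, plus the cyclic condition $\lrangle{YZ_n\hat Z_1}>0$. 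To handle these I would argue that the set $\mathcal{V}_{\overline{\T}}^{>0}$ is a connected, open, $2k$-dimensional semialgebraic subset of $\Gr_{k,k+2}(\mathbb{R})$ on which the amplituhedron map has constant sign pattern (using that the $x_{ab}$ never change sign), then invoke the fact that $\gto{\hatG(\overline{\T})}$ is the unique component of the chamber complex on which all the prescribed twistors are positive and which meets the totally positive part. Here the results of \cite{PSBW} on the boundary structure of positroid tiles, together with the fact that every face of $\mathcal{V}_{\overline{\T}}$ obtained by setting some $x_{ab}=0$ corresponds (via mutation-equivalence among $\T\in\mathcal{S}(\overline{\mathcal{T}})$) to a proper boundary stratum of the tile, should force $\mathcal{V}_{\overline{\T}}^{>0}$ to coincide with $\gto{\hatG(\overline{\T})}$.
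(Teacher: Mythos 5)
Your high-level structure is sound, and the bookkeeping is right: the count $\sum_i(2s_i-4)=2k$ for the cluster size is correct, the forward containment $\gto{\hatG(\overline{\T})}\subseteq\mcv_{\overline{\T}}^{>0}$ does follow directly from the inequality description in \cref{cor:GTsInBijection}, and the mutation relation does reduce to the three-term Pl\"ucker relation among the twistors (which, via \cref{lem:coordinates}, satisfy the $\Gr_{2,n}$ Pl\"ucker relations) together with a sign check on $(-1)^{\area}$. The ``$2k+r$ twistors minus $r$ scalings'' count gives only that the cluster chart is dominant and hence generically finite onto the torus; you still owe an argument that the degree is one, and a single Jacobian computation does not by itself give birationality (it rules out generic ramification, not multiplicity).

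The genuine gap is in the reverse containment $\mcv_{\overline{\T}}^{>0}\subseteq\gto{\hatG(\overline{\T})}$. You invoke ``the unique component of the chamber complex on which all the prescribed twistors are positive and which meets the totally positive part,'' but this does not parse: the sign stratification of \cref{def:chamber} records the signs of \emph{all} $\binom{n}{2}$ twistors, while positivity of the $x_{ab}$ constrains only the $2k+r$ twistors attached to black arcs of $\T$. There is no a priori reason that a point $Y$ with all $x_{ab}(Y)>0$ lies in the amplituhedron at all, let alone in the sign stratum of the tile; $\mcv_{\overline{\T}}^{>0}$ is defined inside $\Gr_{k,k+2}(\R)$ with no reference to $\AA(Z)$, and a tile $\gto{\hatG(\overline{\T})}$ is in general a \emph{union} of many amplituhedron chambers (cf. \cref{thm:wSimpCover}), not a single one. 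What actually closes the argument is a connectedness/open-closed step that you gesture at but do not supply: (i) $\mcv_{\overline{\T}}^{>0}$ is connected, because a single cluster chart $\Sigma_\T$ identifies it with a subset of $\R_{>0}^{2k}$ and by positivity of the Laurent phenomenon for the (finite type, product-of-$A$'s) cluster algebra, positivity in one seed implies positivity in all seeds, so the chart is onto the full orthant; (ii) $\gto{\hatG(\overline{\T})}$ is open in $\Gr_{k,k+2}(\R)$ by invariance of domain, since $\tilde Z$ is injective on the $2k$-dimensional cell $S_{\hatG(\T)}$ and $\dim\Gr_{k,k+2}=2k$; (iii) $\gto{\hatG(\overline{\T})}$ is closed in $\mcv_{\overline{\T}}^{>0}$, because every facet of the tile lies on a hypersurface $\langle YZ_iZ_j\rangle=0$ where $p_{ij}$ is a cluster variable of the relevant algebra (this is the content of \cref{thm:adj}), so a boundary point of the tile has some cluster variable vanishing and hence cannot lie in $\mcv_{\overline{\T}}^{>0}$. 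Nonempty + open + closed in the connected set $\mcv_{\overline{\T}}^{>0}$ then forces equality. Without (i) and (iii) pinned down, your last paragraph does not constitute a proof.

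One more small caution: you should also confirm that the inequalities in \cref{cor:GTsInBijection} are strict on the \emph{open} tile (not just nonstrict on the closed tile). This is true, but it uses that $\gto{\hatG(\overline{\T})}$ is open (so it cannot meet the hypersurfaces defining the boundary) and is again a consequence of the invariance-of-domain point above; it is worth stating explicitly rather than taking for granted.
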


We conjecture that for even $m$, each positroid tile
of $\mathcal{A}_{n,k,m}(Z)$ can be realized as the totally positive part
of a cluster variety in $\Gr_{k,k+m}(\CC)$.

\section{Future directions}
Clearly many problems about the amplituhedron remain wide open.
Moreover, there are other 
related geometric objects, including 
the \emph{loop amplituhedron}
 \cite{arkani-hamed_trnka}, and the \emph{momentum amplituhedron}
(defined for $m=4$ in  \cite{mamp} and for even $m$ in \cite{LPW}).  It would be interesting to explore
these objects systematically as above.

\bigskip
\textsc{Acknowledgements:}
Main references for this article include \cite{karpwilliams,
karp:2017ouj, LPW, PSBW, SW}.
It is a pleasure to thank my  collaborators, 
especially S. Karp, T. Lukowski, M. Parisi,  M. Sherman-Bennett, D. Speyer, and Y. Zhang.
I have also benefited enormously from discussions
and collaborations with many others,  including 
F. Ardila, 
N. Arkani-Hamed, S. Fomin, A. Postnikov, K.
Rietsch,  F. Rinc\'on, and B. Sturmfels.
	Finally, I thank
	J. Boretsky, 
A. Burcroff, 
	C. Defant, S. Fomin, J. Gao, 
	S. Karp, B. Keller, M. Parisi, M. Sherman-Bennett, and R.
Vlad for helpful comments on this manuscript, and the 
	Radcliffe Institute for (virtual) hospitality.

This work was partially supported by NSF grants DMS-1854316 and DMS-1854512.

\bibliographystyle{emss}
\bibliography{bibliography}







\end{document}